\newcommand{\rbb}{\mathbb{R}}
\renewcommand{\L}{\mathcal{L}}
\newcommand{\W}{\mathcal{W}}
\newcommand{\X}{\mathbf{X}}
\newcommand{\Q}{\mathbf{Q}}
\newcommand{\B}{\mathcal{B}}
\newcommand{\la}{\langle}
\newcommand{\ra}{\rangle}
\newcommand{\ctilde}{\tilde{c}}
\newcommand{\Ctilde}{\tilde{C}}
\newcommand{\nt}{\notag}
\newcommand{\xbar}{\bar{x}}
\newcommand{\Ht}{\tilde{H}}
\newcommand{\Lt}{\tilde{\mathcal{L}}}
\newcommand{\xb}{\mathrm{x}}
\newcommand{\vb}{\mathrm{v}}
\newcommand{\zb}{\mathrm{z}}
\newcommand{\fb}{\mathrm{f}}
\newcommand{\qb}{\mathrm{q}}
\newcommand{\mi}{\wedge}
\newcommand{\D}{\mathcal{D}}
\renewcommand{\d}{\text{d}}
\newcommand{\f}{\varphi}
\newcommand{\grad}{\nabla}
\newcommand{\E}{\mathbb{E}}
\renewcommand{\P}{\mathbb{P}}
\newcommand{\xhat}{\hat{x}}
\newcommand{\vhat}{\hat{v}}
\newcommand{\zhat}{\hat{z}}
\newcommand{\A}{\mathcal{A}}
\newcommand{\M}{\mathcal{M}}
\newcommand{\U}{U}
\newcommand{\G}{G}
\newcommand{\close}{\!\!\!}
\theoremstyle{plain}
\newtheorem{theorem}{Theorem}[section]
\newtheorem{lemma}[theorem]{Lemma}
\newtheorem{assumption}[theorem]{Assumption}
\newtheorem{proposition}[theorem]{Proposition}
\theoremstyle{definition}
\newtheorem{definition}[theorem]{Definition}
\newtheorem{remark}[theorem]{Remark}
\numberwithin{equation}{section}
\title{Asymptotic analysis for the generalized Langevin equation with singular potentials}
\author{Manh Hong Duong$^1$ and Hung Dang~Nguyen$^2$}
\address{$^1$ School of Mathematics, University of Birmingham, Birmingham, UK}
\address{$^2$ Department of Mathematics, University of Tennessee, Knoxville, Tennessee, USA}
\begin{document}

\maketitle

\begin{abstract}
We consider a system of interacting particles governed by the generalized Langevin equation (GLE) in the presence of external confining potentials, singular repulsive forces, as well as memory kernels. Using a Mori-Zwanzig approach, we represent the system by a class of Markovian dynamics. Under a general set of conditions on the nonlinearities, we study the large-time asymptotics of the multi-particle Markovian GLEs. We show that the system is always exponentially attractive toward the unique invariant Gibbs probability measure. The proof relies on a novel construction of Lyapunov functions. We then establish the validity of the small mass approximation for the solutions by an appropriate equation on any finite-time window. Important examples of singular potentials in our results include the Lennard-Jones and Coulomb functions.
\end{abstract}

\section{Introduction} \label{sec:intro}
Given $N\ge 2$, we are interested in the following system of generalized Langevin equations in $\rbb^d$, $d\ge 1$:
\begin{align} \label{eqn:GLE:N-particle:integral}
\d\, x_i(t) &= v_i(t)\d t,\qquad i=1,\dots,N, \nt  \\
m\,\d\, v_i(t) & = -\gamma v_i(t) \d t -\grad \U(x_i(t))\d t- \sum_{j\neq i}\grad \G\big(x_i(t)-x_j(t)\big) \d t +\sqrt{2\gamma} \,\d W_{i,0}(t)\nt \\
&\qquad   +\int_0^t K_i(t-s)v_i(s)\d s\d t+F_i(t)\d t.
\end{align}
System \eqref{eqn:GLE:N-particle:integral} was introduced to describe the evolution of $N$ interacting micro-particles in a thermally fluctuating viscoelastic medium, see \cite{baczewski2013numerical,
bockius2021model,
duong2015long,duong2019mean,
gomes2020mean,gottwald2015parametrizing,
jung2018generalized,
ness2015applications,
wei2016chain} and references therein. The bivariate process $(x_i(t),v_i(t))$, $i=1,\dots,N$ represents the position and velocity of the $i^{\text{th}}$ particle. On the $v_i$-equation in \eqref{eqn:GLE:N-particle:integral}, $m>0$ is the particle's mass, $\gamma>0$ governs the viscous friction, 
$\{W_{i,0}\}_{i=1,\ldots, N}$ are independent standard $d$-dimensional Wiener processes, $U:\rbb^d \to[0,\infty)$ represents a confining potential satisfying polynomial growth and certain dissipative conditions and $G:\rbb^d\setminus\{0\}\to\rbb$ is a singular repulsive potential. Furthermore, the $i^{\text{th}}$ particle is subjected to a convolution term involving the convolution kernel $K_i:[0,\infty)\to[0,\infty)$ that characterizes the delayed response of the fluid to the particle's past movement \cite{kneller2011generalized,mason1995optical,
mori1965continued}. In accordance with the \emph{fluctuation-dissipation} relationship \cite{kubo1966fluctuation,
zwanzig2001nonequilibrium}, the random force $F_i(t)$ is a mean-zero, stationary Gaussian process linked to $K_i$ via the relation 
\begin{align} \label{eqn:fluctuation-dissipation}
\E[F_i(t)F_i(s)]=K_i(|t-s|).
\end{align}

In the absence of memory effects, that is setting $K_i\equiv 0$ and $F_i\equiv 0$, \eqref{eqn:GLE:N-particle:integral} is reduced to the classical underdamped Langevin system modeling Brownian particles driven by repulsive external forces 
\begin{align} \label{eqn:Langevin:N-particle}
\d\, x_i(t) &= v_i(t)\d t,\quad i=1,\dots,N, \nt  \\
m\,\d\, v_i(t) & = -\gamma v_i(t) \d t -\grad \U(x_i(t))\d t -\sum_{j\neq i} \grad \G(x_i(t)-x_j(t)) \d t+\sqrt{2\gamma} \,\d W_{i,0}(t).
\end{align}
In particular, the large-time asymptotic of \eqref{eqn:Langevin:N-particle} is well-understood. That is under a wide class of polynomial potential $U$ and singular potential $G$, including the instances of Lennard-Jones and Coulomb functions, system \eqref{eqn:Langevin:N-particle} admits a unique invariant probability measure which is exponentially attractive and whose formula is given by \cite{conrad2010construction, cooke2017geometric, 
grothaus2015hypocoercivity,
herzog2017ergodicity, lu2019geometric}
\begin{align*}
\pi(\d\xb,\d\vb)=\frac{1}{Z}\exp\Big\{-\Big(\frac{1}{2}\sum_{i=1}^N m|v_i|^2+\sum_{i=1}^N U(x_i)+\close\sum_{1\le i<j\le N}\close G(x_i-x_j)\Big) \Big\}\d \xb\d \vb.
\end{align*}
In the above, $Z$ is the normalization constant, $\xb=(x_1,\dots,x_N)$ and $\vb=(v_1,\dots,v_N)$. However, as pointed out elsewhere in \cite{kubo1966fluctuation,zwanzig2001nonequilibrium}, the presence of elasticity in a viscoelastic medium induces a memory effect between the motion of the particles and the surrounding molecular bombardment. It is thus more physically relevant to consider \eqref{eqn:GLE:N-particle:integral}. On the other hand, in the absence of singularities in \eqref{eqn:GLE:N-particle:integral} ($G\equiv 0$), there is a vast literature in the context of, e.g., large-time behaviors \cite{glatt2020generalized,
herzog2023gibbsian,ottobre2011asymptotic,
pavliotis2014stochastic,pavliotis2021scaling} as well as small mass limits \cite{herzog2016small,hottovy2015smoluchowski,
lim2019homogenization,lim2020homogenization,
nguyen2018small,shi2021small}. In contrast, much less is known about the system \eqref{eqn:GLE:N-particle:integral} in the presence of both memory kernels and singular potentials for any of those limiting regimes.

The main goal of the present article is thus two-fold. Firstly, under a general set of conditions on the nonlinearities and memory kernels, we asymptotically characterize the equilibrium of \eqref{eqn:GLE:N-particle:integral} when $t\to\infty$. More specifically, we aim to prove that under these practical assumptions, \eqref{eqn:GLE:N-particle:integral} is exponentially attractive toward a unique ergodic probability measure. Secondly, we explore the behaviors of \eqref{eqn:GLE:N-particle:integral} in the small mass regime, i.e., by taking $m$ to zero on the right-hand side of the $v_i$-equation in \eqref{eqn:GLE:N-particle:integral}. Due to the singular limit when $m$ is small, the velocity $\vb(t)$ is oscillating fast whereas the position $\xb(t)$ is still moving slow. Hence, we seek to identify a limiting process $\qb(t)$ such that $\xb(t)$ can be related to $\qb(t)$ on any finite-time window. We now provide a more detailed description of the main results.

\subsection{Geometric ergodicity} \label{sec:intro:ergodicity}
In general, there is no Markovian dynamics associated with \eqref{eqn:GLE:N-particle:integral}, owing to the presence of the memory kernels. Nevertheless, it is well-known that for kernels which are a sum of exponential functions, we may adopt the Mori-Zwanzig approach to produce a Markovian approximation to \eqref{eqn:GLE:N-particle:integral} \cite{glatt2020generalized,kubo1966fluctuation,
mori1965continued,ottobre2011asymptotic,
pavliotis2014stochastic,
zwanzig2001nonequilibrium}. More specifically, when $K_i$ is given by
\begin{align} \label{form:K_i}
K_i(t)=\sum_{\ell=1}^{k_i} \lambda_{i,\ell}^2 e^{-\alpha_{i,\ell} t},\quad t\ge 0,
\end{align}
for some positive constants $\lambda_{i,\ell}$, $\alpha_{i,\ell}$, $\ell=1,\dots,k_i$,  
following the framework of \cite{baczewski2013numerical,doob1942brownian,duong2022accurate,
goychuk2012viscoelastic,
ottobre2011asymptotic,
pavliotis2014stochastic}, we can rewrite \eqref{eqn:GLE:N-particle:integral} as the following system
\begin{align} \label{eqn:GLE:N-particle}
\d\, x_i(t) &= v_i(t)\d t,\qquad i=1,\dots,N, \nt  \\
m\,\d\, v_i(t) & = -\gamma v_i(t) \d t -\grad \U(x_i(t))\d t +\sqrt{2\gamma} \,\d W_{i,0}(t)\\
&\qquad  - \sum_{j\neq i}\grad \G\big(x_i(t)-x_j(t)\big) \d t +\sum _{\ell=1}^{k_i} \lambda_{i,\ell}  z_{i,\ell}(t)\d t, \nt \\ 
\d\, z_{i,\ell}(t) &= -\alpha_{i,\ell} z_{i,\ell} (t)\d t-\lambda_{i,\ell}  v_i(t)\d t+\sqrt{2\alpha_{i,\ell} }\,\d W_{i,\ell} (t),\quad \ell=1,\dots,k_i.\nt 
\end{align}
See \cite[Proposition 8.1]{pavliotis2014stochastic} for a detailed discussion of this formulation.

Denoting $\zb_i=(z_{i,1},\dots,z_{i,k_i})\in (\mathbb{R}^d)^{k_i}, i=1,\ldots, N
$, we introduce the Hamiltonian function $H_N$ defined as
\begin{align} \label{form:H_N}
H_N(\xb,\vb,\zb_1,\dots,\zb_N) = \frac{1}{2}m|\vb|^2+ \sum_{i=1}^N U(x_i)+\close\sum_{1\le i< j\le N}\close\G(x_i-x_j)+\frac{1}{2}\sum_{i=1}^N |\zb_i|^2.
\end{align}
The corresponding Gibbs measure is given by
\begin{align} \label{form:pi_N}
\pi_N(\xb,\vb,\zb_1,\dots,\zb_N)=\frac{1}{Z_N}\exp\big\{-H_N(\xb,\vb,\zb_1,\dots,\zb_N)\big\}\d \xb\d\vb\d\zb_1\dots\d\zb_N,
\end{align}
where $Z_N$ is the normalization constant. Under suitable assumptions on the potentials, cf. Assumption \ref{cond:U} and Assumption \ref{cond:G} below, one can rely on the Hamiltonian structure to show that \eqref{eqn:GLE:N-particle} is always well-posed. That is a strong solution of \eqref{eqn:GLE:N-particle} exists and is unique for all finite time. Furthermore, by a routine computation \cite[Proposition 8.2]{pavliotis2014stochastic}, it is not difficult to see that the Gibbs measure $\pi_N$ as in \eqref{form:pi_N} is an invariant probability measure of \eqref{eqn:GLE:N-particle} and that there is
dissipation towards such a measure. On the other hand, as observed elsewhere in \cite{conrad2010construction, cooke2017geometric, 
grothaus2015hypocoercivity,
herzog2017ergodicity, lu2019geometric}, in the presence of the nonlinearities, the Hamiltonian \eqref{form:H_N} does not produce an energy estimate of the form
\begin{align*}
\frac{\d}{\d t}\E [V(t)]\le -c\, \E [V(t)]+C,\quad t\ge 0,
\end{align*}
which is needed to obtain geometric ergodicity. In this paper, we tackle the problem by exploiting the technique of Lyapunov function, cf. Definition \ref{def:Lyapunov}, and successfully establish the uniqueness of $\pi_N$ as well as an exponential convergent rate toward $\pi_N$. We note that our result covers important examples of singular potentials such as Lennard-Jones functions and Coulomb functions. We refer the reader to Theorem \ref{thm:ergodicity:N-particle} for a precise statement of this result and to Section \ref{sec:ergodicity} for its proof.

Historically, in the absence of repulsive forces, \eqref{eqn:GLE:N-particle} is reduced to the following   single-particle GLE 
\begin{align} \label{eqn:GLE:no-G}
\d\, x(t) &= v(t)\d t, \nt  \\
m\,\d\, v(t) & = -\gamma v(t) \d t -\grad \U(x(t))\d t +\sum _{i=1}^k \lambda_i z_i(t)\d t +\sqrt{2\gamma} \,\d W_0(t),\\ 
\d\, z_i(t) &= -\alpha_i z_i(t)\d t-\lambda_i v(t)\d t+\sqrt{2\alpha_i}\,\d W_i(t),\quad i=1,\dots,k,\nt 
\end{align}
whose large-time asymptotic has been studied extensively \cite{glatt2020generalized,herzog2023gibbsian,
ottobre2011asymptotic,pavliotis2014stochastic}. Particularly, mixing rates for the kernel instances of finite sum of exponentials were established in \cite{ottobre2011asymptotic,pavliotis2014stochastic} via the weak Harris Theorem \cite{hairer2011yet,meyn2012markov}. Analogously, kernel instances as an infinite sum of exponentials were explored in \cite{glatt2020generalized}. In this work, the uniqueness of invariant probability measures was obtained by employing the so-called asymptotic coupling argument. Recently in \cite{herzog2023gibbsian}, a Gibbsian approach was adopted from \cite{ weinan2002gibbsian,weinan2001gibbsian,mattingly2002exponential}  to study more general kernels that need not be a sum of exponentials.  

Turning back to our ergodicity results for singular potentials, the proof of Theorem \ref{thm:ergodicity:N-particle} below relies on two ingredients: an irreducibility condition and a suitable Lyapunov function \cite{ hairer2011yet,mattingly2002ergodicity,
meyn2012markov}. Whereas the irreducibility is relatively standard and can be addressed by adapting to the argument in e.g., \cite[Corollary 5.12]{herzog2017ergodicity} and \cite[Proposition 2.5]{mattingly2002ergodicity}, the construction of Lyapunov functions is highly nontrivial requiring a deeper understanding of the dynamics. Notably, in case the viscous drag is positive ($\gamma>0$), we draw upon the ideas developed in \cite{herzog2017ergodicity,lu2019geometric} tailored to our settings, cf. Lemma \ref{lem:Lyapunov:V_N^1}. On the other hand, when $\gamma=0$, it is worth to point out that the method therein is unfortunately not applicable owing to a lack of dissipation in $\vb$. To circumvent the issue, we effectively build up a novel Lyapunov function specifically designed for \eqref{eqn:GLE:N-particle}, cf. Lemma \ref{lem:Lyapunov:V_N^2}. The main idea of the construction is to realize the dominating effects at high energy ($H_N\gg 1$). We do so by employing a heuristic asymptotic scaling allowing for determining the leading order terms in ``bad" regions, i.e., when $|\xb|\to 0$ and $|\vb|,|\zb|\to\infty$. In turn, this will be crucial in the derivation of our Lyapunov functions that ultimately will be invoked to conclude ergodicity when $\gamma=0$. The heuristic argument will be presented out in Section \ref{sec:ergodicity:single-particle:gamma>0}, whereas the proofs of Lemma \ref{lem:Lyapunov:V_N^1} and Lemma \ref{lem:Lyapunov:V_N^2} are supplied in Section \ref{sec:ergodicity:N-particle}.

\subsection{Small mass limit} \label{sec:intro:small-mass}

In the second main topic of the paper, we investigate the small mass limit for the process $\xb(t)=\xb_m(t)$ in \eqref{eqn:GLE:N-particle}. Namely, by taking $m$ to zero on the right-hand side of the $v_i$-equation in \eqref{eqn:GLE:N-particle}, we aim to derive a process $\qb(t)$ taking values in $(\rbb^d)^N$ such that $\xb_m(t)$ can be well-approximated by $\qb(t)$ on any finite time window, i.e.,
\begin{align} \label{lim:small-mass:heuristic}
\sup_{t\in[0,T]}|\xb_m(t)-\qb(t)|\to 0,\quad \text{as  }m\to 0,
\end{align}
where the limit holds in an appropriate sense. In the literature, such a statement is also known as the Smoluchowski-Kramer approximation \cite{freidlin2004some, kramers1940brownian,smoluchowski1916drei}.

We note that in the absence of the singularities, there is a vast literature on limits of the form \eqref{lim:small-mass:heuristic} for various settings of the single-particle GLE as well as other second order systems. For examples, numerical simulations were performed in \cite{hottovy2012noise,hottovy2012thermophoresis}. Rigorous results in this direction for state-dependent drift terms appear in the work of  \cite{cerrai2020averaging,hottovy2015smoluchowski,hottovy2012noise,
pardoux2003poisson}. Similar results are also established for Langevin dynamics \cite{herzog2016small,duong2017variational}, finite-dimensional single-particle GLE \cite{lim2019homogenization,lim2020homogenization}, as well as infinite-dimensional single-particle GLE \cite{nguyen2018small,shi2021small}. Analogous study for the stochastic wave equation was central in the work of \cite{cerrai2006smoluchowski,cerrai2006smoluchowski2,
cerrai2017smoluchowski,cerrai2020convergence,
cerrai2014smoluchowski,cerrai2016smoluchowski,
nguyen2022small}. On the other hand, small mass limits in the context of repulsive forces are much less studied, but see the recent paper \cite{choi2022quantified} for a quantification of the small-mass limit of the kinetic Vlasov–Fokker–Planck equations with singularities. In our work, we investigate this problem under a general set of conditions on the nonlinear potentials. More specifically, let us introduce the following system
\begin{align} \label{eqn:GLE:N-particle:m=0}
\gamma \d q_i(t) & =  -\grad U(q_i(t))\d t- \sum_{j\neq i}\grad \G\big(q_i(t)-q_j(t)\big) \d t+\sqrt{2\gamma}\d W_{i,0}(t) \nt \\
&\qquad\qquad -\sum_{\ell=1}^{k_i} \lambda_{i,\ell}^2 q_i(t)\d t+\sum_{\ell=1}^{k_i}\lambda_{i,\ell} f_{i,\ell}(t)\d t, \qquad i=1,\dots,N,  \\
\d f_{i,\ell}(t) &= -\alpha_{i,\ell} f_{i,\ell}(t)+\lambda_{i,\ell}\, \alpha_{i,\ell}\, q_i(t)\d t+\sqrt{2\alpha_{i,\ell}}\d W_{i,\ell}(t),\quad \ell=1,\dots, k_i.\nt
\end{align}
Our second main result states that the process $\qb(t)$ satisfies the following limit in probability for all $\xi,\,T>0$
\begin{align} \label{lim:small-mass:|x_m-q|:heuristic}
\P\Big(\sup_{t\in[0,T]}|\xb_m(t)-\qb(t)|>\xi\Big)\to 0,\quad \text{as  }m\to 0.
\end{align}
The precise statement of \eqref{lim:small-mass:|x_m-q|:heuristic} is provided in Theorem \ref{thm:small-mass:N-particle} while its detailed proof is supplied in Section \ref{sec:small-mass}.

In order to derive the limiting system \eqref{eqn:GLE:N-particle:m=0}, we will adopt the framework developed in \cite{herzog2016small,nguyen2018small} dealing with the same issue in the absence of singular potentials. This involves exploiting the structure of the $z_i$-equation in \eqref{eqn:GLE:N-particle} while making use of Duhamel's formula and an integration by parts. In turn, this allows for completely decoupling $z_i$ from $v_i$, ultimately arriving at \eqref{eqn:GLE:N-particle:m=0}. See Section \ref{sec:small-mass:heuristic} for a further discussion of this point. The proof of \eqref{lim:small-mass:|x_m-q|:heuristic} draws upon the argument in \cite{herzog2016small, nguyen2018small,ottobre2011asymptotic} tailored to our settings. Namely, we first reduce the general problem to the special case when the nonlinearities are assumed to be Lipschitz. This requires a careful analysis on the auxiliary memory variables $\zb_i(t)$, $i=1,\dots,N$, as well as the velocity process $\vb_m(t)$. We then proceed to remove the Lipschitz constraint by making use of crucial moment bounds on the limiting process $\qb(t)$. In turn, this relies on a delicate estimate on \eqref{eqn:GLE:N-particle:m=0} via suitable Lyapunov functions, which are also of independent interest. Similarly to the ergodicity results, we note that the limit \eqref{lim:small-mass:|x_m-q|:heuristic} is applicable to a wide range of repulsive forces, e.g., the Lennard-Jones and Coulomb functions. To the best of the authors' knowledge, the type of limit \eqref{lim:small-mass:|x_m-q|:heuristic} that we establish in this work seems to be the first in this direction for stochastic systems with memory and singularities. The explicit argument for \eqref{lim:small-mass:|x_m-q|:heuristic} will be carried out while making use of a series of auxiliary results in Section~\ref{sec:small-mass}. The proof of Theorem \ref{thm:small-mass:N-particle} will be presented in Section \ref{sec:small-mass:proof-of-theorem}.

Finally, we remark that a crucial property that is leveraged in this work is the choice of memory kernels as a finite sum of exponentials, cf. \eqref{form:K_i}. This allows for the convenience of employing Markovian framework to study asymptotic analysis for \eqref{eqn:GLE:N-particle:integral} \cite{mori1965continued, ottobre2011asymptotic,
pavliotis2014stochastic,
zwanzig2001nonequilibrium} under the impact of interacting repulsive forces. For SDEs with non-exponentially decaying kernels, e.g., sub-exponential or power-law, with smooth nonlinearities though, we refer the reader to \cite{baeumer2015existence,desch2011p,
glatt2020generalized,herzog2023gibbsian,
nguyen2018small}.

\subsection{Organization of the paper}\label{sec:intro:organization}
The rest of the paper is organized as follows: in Section \ref{sec:result}, we introduce the notations as well as the assumptions that we make on the nonlinearities. We also state the main results of the paper, including Theorem \ref{thm:ergodicity:N-particle} on geometric ergodicity of \eqref{eqn:GLE:N-particle} and Theorem \ref{thm:small-mass:N-particle} on the validity of the approximation of \eqref{eqn:GLE:N-particle} by \eqref{eqn:GLE:N-particle:m=0} in the small mass regime. In Section \ref{sec:ergodicity}, we address the construction of Lyapunov functions for \eqref{eqn:GLE:N-particle} and prove Theorem \ref{thm:ergodicity:N-particle}. In Section \ref{sec:small-mass}, we detail a series of auxiliary results that we employ to prove the convergence of \eqref{eqn:GLE:N-particle} toward \eqref{eqn:GLE:N-particle:m=0}. We also conclude Theorem \ref{thm:small-mass:N-particle} in this section. In Appendix \ref{sec:appendix}, we provide useful estimates on singular potentials that are exploited to establish the main results.

\section{Assumptions and main results}
 \label{sec:result}

Throughout, we let $(\Omega, \mathcal{F}, (\mathcal{F}_t)_{t\geq 0},  \P)$ be a filtered probability space satisfying the usual conditions \cite{karatzas2012brownian} and $(W_{i,j}(t))$, $i=1,\dots, N$, $j=0,\dots,k_i$, be i.i.d standard $d$-dimensional Brownian Motions on $(\Omega, \mathcal{F},\P)$ adapted to the filtration $(\mathcal{F}_t)_{t\geq 0}$.  

In Section \ref{sec:result:assumption}, we detail sufficient conditions on the nonlinearities $U$ and $G$ that we will employ throughout the analysis. We also formulate the well-posedness through Proposition \ref{prop:GLE:N-particle:well-posed}. In Section \ref{sec:result:ergodicity}, we state the first main result through Theorem~\ref{thm:ergodicity:N-particle} giving the uniqueness of the invariant Gibbs measure $\pi_N$ defined in \eqref{form:pi_N}, as well as the exponential convergent rate toward $\pi_N$ in suitable Wasserstein distances. In Section \ref{sec:result:small-mass}, we provide our second main result through Theorem \ref{thm:small-mass:N-particle} concerning the validity of the small mass limit of \eqref{eqn:GLE:N-particle} on any finite time window.

\subsection{Main assumptions} \label{sec:result:assumption} For notational convenience, we denote the inner product and the norm in $\rbb^d$ by $\la\cdot,\cdot\ra$ and $|\cdot|$, respectively. Concerning the potential $U$, we will impose the following condition \cite{ottobre2011asymptotic,
pavliotis2014stochastic}.

\begin{assumption} \label{cond:U} \textup{(i)} $U\in C^\infty(\rbb^d;[1,\infty))$ satisfies
\begin{equation} \label{cond:U:U'(x)=O(x^lambda)} 
|U(x)|\le a_1(1+|x|^{\lambda+1}),\quad  |\grad \U(x)|\le a_1(1+|x|^{\lambda}),\quad x\in\rbb^d,
\end{equation}
and
\begin{align}\label{cond:U:U''(x)=O(x^lambda-1)} 
 |\grad^2 \U(x)|\le a_1(1+|x|^{\lambda-1}),
\end{align}
for some constants $a_1>0$ and $\lambda\ge 1$.

\textup{(ii)} Furthermore, there exist positive constants $a_2,a_3$ such that
\begin{equation} \label{cond:U:x.U'(x)>-x^(lambda+1)}
\la \grad\U(x), x\ra \ge a_2|x|^{\lambda+1}-a_3,\quad x\in\rbb^d.
\end{equation}

\textup{(iii)} If $\gamma=0$, then $\lambda=1 $.

%
%
%
%
%
\end{assumption}

\begin{remark} \label{rem:U} The first two conditions (i) and (ii) in Assumption \ref{cond:U} are quite popular and can be found in many previous literature for the Langevin dynamics \cite{glatt2020generalized, mattingly2002ergodicity,nguyen2018small}. In particular, $U(x)$ essentially behaves like $|x|^{\lambda+1}$ at infinity. On the other hand, for the generalized Langevin counterpart, in the absence of the viscous drag, i.e., $\gamma=0$ in $v-$equation in \eqref{eqn:GLE:N-particle}, we have to impose condition (iii) \cite{ottobre2011asymptotic} requiring $U$ be essentially a quadratic potential. We note however that this condition is unnecessary for the well-posedness. Rather, it is to guarantee the existence of suitable Lyapunov functions, so as to ensure geometric ergodicity of \eqref{eqn:GLE:N-particle}. See the proofs of Lemma \ref{lem:Lyapunov:V_2} and Lemma \ref{lem:Lyapunov:V_N^2} for a further discussion of this point. 
\end{remark}

Concerning the singular potential $G$, we will make the following condition \cite{bolley2018dynamics,herzog2017ergodicity,
lu2019geometric}.

\begin{assumption} \label{cond:G} \textup{(i)} $G\in C^\infty(\rbb^d\setminus\{0\};\rbb)$ satisfies $\G(x)\to \infty$ as $|x|\to 0$. Furthermore, there exists a positive constant $\beta_1\ge 1$ such that for all $x\in\rbb^d\setminus\{0\}$
\begin{align} 
|\G(x)|&\le a_1\Big(1+|x|+\frac{1}{|x|^{\beta_1}}\Big) ,\label{cond:G:G<1/|x|^beta}\\
|\grad \G(x)| &\le a_1\Big(1+\frac{1}{|x|^{\beta_1}}\Big),\label{cond:G:grad.G(x)<1/|x|^beta}\\
\text{and} \quad |\grad^2 \G(x)|& \le a_1\Big(1+\frac{1}{|x|^{\beta_1+1}}\Big),\label{cond:G:grad^2.G(x)<1/|x|^beta}
\end{align}
where $a_1$ is the constant as in Assumption~\ref{cond:U}.

\textup{(ii)} There exist positive constants $\beta_2\in[0,\beta_1)$, $a_4,\,a_5$ and $a_6$ such that 
\begin{equation} \label{cond:G:|grad.G(x)+q/|x|^beta_1|<1/|x|^beta_2}
\Big|\grad G(x) +a_4\frac{x}{|x|^{\beta_1+1}}\Big| \le \frac{a_5}{|x|^{\beta_2}}+a_6, \quad x\in\rbb^d\setminus\{0\}.
\end{equation}

\end{assumption}

\begin{remark} \label{rem:G} (i) Although \eqref{cond:G:|grad.G(x)+q/|x|^beta_1|<1/|x|^beta_2} is slightly odd looking, this condition essentially states that $\grad G$ can be expressed as
\begin{align*}
\grad G(x)= -c\frac{x}{|x|^{\beta_1+1}}+\text{lower order terms}.
\end{align*}
hence the requirement $\beta_2<\beta_1$. Also, $\grad G$ has the order of $|x|^{-\beta_1}$ as $|x|\to 0$, i.e.,
\begin{align*}
\frac{c}{|x|^{\beta_1}}-C\le |\grad G(x)|\le \frac{C}{|x|^{\beta_1}}+C,
\end{align*}
which need not imply \eqref{cond:G:|grad.G(x)+q/|x|^beta_1|<1/|x|^beta_2}. Condition \eqref{cond:G:|grad.G(x)+q/|x|^beta_1|<1/|x|^beta_2} will be employed in the Lyapunov proofs in Section \ref{sec:ergodicity}.

(ii) A routine calculation shows that both the Coulomb functions
\begin{align} \label{form:Coulomb}
G(x)=\begin{cases}
-\log|x|,& d=2,\\
\frac{1}{|x|^{d-2}},& d\ge 3,
\end{cases}
\end{align}
 and the Lennard-Jones functions 
\begin{align} \label{form:Lennard-Jones}
G(x) = \frac{c_0}{|x|^{12}}-\frac{c_1}{|x|^6},
\end{align} 
satisfy \eqref{cond:G:|grad.G(x)+q/|x|^beta_1|<1/|x|^beta_2}. Particularly, in case of \eqref{form:Lennard-Jones}, we have
\begin{align*}
\grad G(x) = -12c_0\frac{x}{|x|^{14}}+6c_1\frac{x}{|x|^{8}}, 
\end{align*}
which verifies \eqref{cond:G:|grad.G(x)+q/|x|^beta_1|<1/|x|^beta_2} with $\beta_1=13$ and $\beta_2=7$. Another well-known example for the case $\beta_1=1$ is the log function $G(x)=-\log|x|$ whereas the case $\beta_1>1$ includes the instance $G(x)=|x|^{-\beta_1+1}$.

(iiI) Without loss of generality, we may assume that
\begin{align} \label{cond:U+G>0}
\sum_{i=1}^N U(x_i)+\close\sum_{1\le i<j\le N}\close G(x_i-x_j)\ge 0.
\end{align}
Otherwise, we may replace $U(x)$ by $U(x)+c$ for some sufficiently large constant $c$, which does not affect \eqref{eqn:GLE:N-particle}.

\end{remark}

Under the above two assumptions, we are able to establish the geometric ergodicity of \eqref{eqn:GLE:N-particle}, cf. Theorem \ref{thm:ergodicity:N-particle} below, in any dimension $d\ge 1$. They are also sufficient for the purpose of investigating the small mass limits when either $d\ge 2$ or $\beta_1>1$. On the other hand, when $d=1$ and $\beta_1=1$ (e.g., log potentials), we will impose the following extra condition on $G$.

\begin{assumption} \label{cond:G:d=1}
Let $G$ and $\beta_1$ be as in Assumption \ref{cond:G}. In dimension $d=1$, if $\beta_1=1$, then 
\begin{align} \label{cond:G:d=1:a_4}
a_4> \frac{1}{2},
\end{align}
where $a_4$ is the positive constant from condition \eqref{cond:G:|grad.G(x)+q/|x|^beta_1|<1/|x|^beta_2}.
\end{assumption}

\begin{remark} \label{rem:G:small-mass:d=1:beta_1=1} As it turns out, in dimension $d=1$, log potentials ($\beta_1=1$) induce further difficulty for the small mass limit. To circumvent the issue, we have to impose Assumption \ref{cond:G:d=1}, which requires that the repulsive force be strong enough so as to establish suitable energy estimates. See the proof of Lemma \ref{lem:GLE:N-particle:m=0:supnorm} for a detailed explanation of this point. 
 It is also worth to mention that the threshold $a_4>1/2$ is a manifestation of Lemma \ref{lem:<|x_i-x_j|^s,|x_i-x_ell|^s>} and is perhaps far from optimality. Thus, the small mass limit in the case $d=1$ and $\beta_1=1$, cf. Theorem \ref{thm:small-mass:N-particle} below, remains an open problem for all $a_4\in(0,1/2]$ .

\end{remark}

Having introduced sufficient conditions on the potentials, we turn to the issue of well-posedness for \eqref{eqn:GLE:N-particle}. The domain where the displacement process $\xb_m(t)$ evolves on is denoted by $\D$ and is defined as  \cite{bolley2018dynamics,herzog2017ergodicity,
lu2019geometric}
\begin{align} \label{form:D}
\D=\{\xb=(x_1,\dots,x_N) \in (\rbb^d)^N: x_i\neq x_j \text{ if } i\neq j\}.
\end{align}
Then, we define the phase space for the solution of \eqref{eqn:GLE:N-particle} as follow:
\begin{align} \label{form:X}
\X=\D\times (\rbb^d)^N \times \prod_{i=1}^N (\rbb^d)^{k_i}.
\end{align}

The first result of this paper is the following well-posedness result ensuring the existence and uniqueness
of strong solutions to system \eqref{eqn:GLE:N-particle}.

\begin{proposition} \label{prop:GLE:N-particle:well-posed}  Under Assumption \ref{cond:U} and Assumption \ref{cond:G}, for every initial condition $X_0=(\xb(0),\vb(0)$, $\zb_{1}(0)$,$\dots$,$\zb_N(0))\in \X$, system \eqref{eqn:GLE:N-particle} admits a unique strong solution $X_m(t;X_0)=\big(\xb_m(t)$, $\vb_m(t)$, $\zb_{1,m}(t)$, ..., $\zb_{N,m}(t)\big)\in\X$.
\end{proposition}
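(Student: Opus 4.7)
The plan is the classical two-step strategy for SDEs with locally Lipschitz but singular coefficients: first obtain local existence and uniqueness up to the first exit time from compact subsets of $\X$, then use the Hamiltonian $H_N$ from \eqref{form:H_N} as a Lyapunov function to rule out both blow-up at infinity and particle collisions in finite time.

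On the open set $\X$, the drift coefficients inherit the $C^1$-regularity of $U$ and $G$ away from the coincidence set (by Assumptions \ref{cond:U} and \ref{cond:G}), while the diffusion coefficients are constant; in particular, they are locally Lipschitz on $\X$. Classical SDE theory (see, e.g., \cite{karatzas2012brownian}) therefore yields a unique strong $\X$-valued solution $X_m(t;X_0)$ on the stochastic interval $[0,\tau)$, where
\begin{equation*}
\tau=\lim_{n\to\infty}\tau_n,\qquad \tau_n:=\inf\Bigl\{t\ge 0 : |X_m(t)|\ge n \text{ or } \min_{i\ne j}|x_i(t)-x_j(t)|\le 1/n\Bigr\}.
\end{equation*}

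To show $\tau=\infty$ almost surely, I would apply It\^o's formula to $H_N(X_m(t\wedge\tau_n))$. The key observation is that the Hamiltonian structure of \eqref{eqn:GLE:N-particle} produces two clean cancellations: the $\grad\U$ and $\grad\G$ contributions from the $v_i$-drift cancel against the terms produced by differentiating $\U(x_i)$ and $\G(x_i-x_j)$ along the $x_i$-flow, and the coupling terms $\sum_\ell\lambda_{i,\ell}\la v_i,z_{i,\ell}\ra$ from the $v_i$-equation cancel with the corresponding $-\lambda_{i,\ell}\la v_i,z_{i,\ell}\ra$ terms from the $z_{i,\ell}$-equation. Only the friction and diffusion-trace contributions survive, yielding
\begin{equation*}
\L H_N(\xb,\vb,\zb_1,\dots,\zb_N)=-\gamma|\vb|^2-\sum_{i=1}^N\sum_{\ell=1}^{k_i}\alpha_{i,\ell}|z_{i,\ell}|^2+C_0\le C_0,
\end{equation*}
for a finite constant $C_0=C_0(d,N,m,\gamma,\{\alpha_{i,\ell}\})$ (the bound remains valid when $\gamma=0$).

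Taking expectations at the bounded stopping time $t\wedge\tau_n$ then gives $\E H_N(X_m(t\wedge\tau_n))\le H_N(X_0)+C_0 t$. Combining the coercivity of $U$ from \eqref{cond:U:x.U'(x)>-x^(lambda+1)}, the quadratic growth of $\tfrac{m}{2}|\vb|^2$ and $\tfrac{1}{2}|\zb|^2$, and the singularity $\G(x)\to\infty$ as $|x|\to 0$ from Assumption \ref{cond:G}(i), every sublevel set $\{H_N\le K\}$ is a compact subset of $\X$. On the event $\{\tau_n\le t\}$ one therefore has $H_N(X_m(\tau_n\wedge t))\ge\rho(n)$ with $\rho(n)\to\infty$, and Markov's inequality yields $\P(\tau_n\le t)\le(H_N(X_0)+C_0 t)/\rho(n)\to 0$ as $n\to\infty$. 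Hence $\tau=\infty$ almost surely and the strong solution is global in $\X$. The only non-routine step is verifying the precise form of the cancellations in the computation of $\L H_N$; everything else is bookkeeping, and the localization by $\tau_n$ ensures that It\^o's formula is applied only inside $\X$, where $H_N$ is smooth.
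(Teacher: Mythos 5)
Your proposal is correct, but it takes the route the paper only mentions in passing rather than the one it actually follows. The paper derives Proposition \ref{prop:GLE:N-particle:well-posed} as a byproduct of the Lyapunov functions $V_N^1$ and $V_N^2$ constructed in Lemma \ref{lem:Lyapunov:V_N^1} (for $\gamma>0$) and Lemma \ref{lem:Lyapunov:V_N^2} (for $\gamma=0$), which are needed anyway for the ergodicity theorem; it explicitly notes that "alternatively, the result can also be proved by using the Hamiltonian as in \eqref{form:H_N} to establish suitable moment bounds," and that alternative is precisely what you carry out. Your key identity is literally the paper's equation \eqref{eqn:L_gamma^N.H_N}, so the cancellation you flag as the only non-routine step is confirmed there (up to the implicit evenness of $G$ that makes the pair forces Hamiltonian, which the paper also uses without comment). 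What your route buys is simplicity and uniformity: a single computation valid for all $\gamma\ge 0$, with no case split, no perturbation terms $\la\xb,\vb\ra$, $\la v_i,z_{i,1}\ra$, $\sqrt{Q_R^N}$, and no tuning of $\varepsilon$ and $R$. What it does not buy is the dissipation in $\xb$ and in the singular terms $|x_i-x_j|^{-\beta_1}$ that geometric ergodicity requires — exactly the deficiency of $H_N$ the paper points out in the introduction — which is why the authors prefer to route well-posedness through $V_N^1$, $V_N^2$. One small point to make explicit in your compactness step: \eqref{cond:G:G<1/|x|^beta} allows $G$ to decrease linearly at large separation, so coercivity of the sublevel sets of $H_N$ in $\xb$ rests on the superlinear growth $U(x)\gtrsim |x|^{\lambda+1}$ with $\lambda\ge 1$ forced by \eqref{cond:U:x.U'(x)>-x^(lambda+1)} dominating that linear decay; you name the right ingredients, but this is the one place where the bookkeeping has content.
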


The proof of Proposition \ref{prop:GLE:N-particle:well-posed} is a consequence of the existence of Lyapunov functions below in Lemma \ref{lem:Lyapunov:V_N^1} and Lemma \ref{lem:Lyapunov:V_N^2}. The argument can be adapted from the well-posedness proof of \cite[Section 3]{glatt2020generalized} tailored to our settings. Alternatively, the result can also be proved by using the Hamiltonian as in \eqref{form:H_N} to establish suitable moment bounds. It however should be noted that Proposition \ref{prop:GLE:N-particle:well-posed} itself is highly nontrivial as well-posedness does not automatically follow a standard argument for SDEs \cite{khasminskii2011stochastic} based on locally Lipschitz continuity, owing to the presence of singular potentials. Nevertheless, the issue can be tackled by constructing appropriate Lyapunov functions.

As a consequence of the well-posedness, we can thus introduce the Markov transition probabilities of the solution $X_m(t)$ by
\begin{align*}
P_t^m(X_0,A):=\P(X_m(t;X_0)\in A),
\end{align*}
which are well-defined for $t\ge 0$, initial condition $X_0\in\X$ and Borel sets $A\subset\X$. Letting $\B_b(\X)$ denote the set of bounded Borel measurable functions $f:\X \rightarrow \rbb$, the associated Markov semigroup $P_t^m:\B_b(\X)\to\B_b(\X)$ is defined and denoted by
\begin{align*}
P_t^m f(X_0)=\E[f(X_m(t;X_0))], \,\, f\in \B_b(\X).
\end{align*}

\subsection{Geometric ergodicity} \label{sec:result:ergodicity}
We now turn to the topic of large-time properties of equation~\eqref{eqn:GLE:N-particle}. Recall that a probability measure $\mu$ on Borel subsets of $\X$ is called {\bf invariant} for the semigroup $P_t^m$ if for every $f\in \B_b(\X)$
\begin{align*}
\int_{\X} f(X) (P_t^m)^*\mu(\d X)=\int_{\X} f(X)\mu(\d X),
\end{align*}
where $(P_t^m)^*\mu$ is defined as \cite{hairer2011yet}
\begin{align*}
(P_t^m)^*\mu(A) = \int_{\X} P_t^m(X,A)\mu (\d X),
\end{align*}
for all Borel sets $A\subset \X$. Next, we denote by $\L_{m,\gamma}^N$ the generator associated with \eqref{eqn:GLE:N-particle}. One defines $\L_{m,\gamma}^N$ for any $\f\in C^2(\X;\rbb)$ by
\begin{align} 
\L_{m,\gamma}^N\f & =\sum_{i=1}^N\Big[ \la \partial_{x_i} \f, v_i\ra +\frac{1}{m} \Big\la \partial_{v_i} \f, -\gamma v_i-\grad U(x_i) -\sum_{j\neq i} \grad \G(x_i-x_j)+\sum_{\ell=1}^{k_i}\lambda_{i,\ell} z_{i,\ell}\Big\ra \label{form:L_N}\\
&\qquad  +\frac{\gamma}{m^2} \triangle_{v_i} \f + \sum _{\ell=1}^{k_i}\la \partial_{z_{i,\ell}} \f, -\alpha_{i,\ell} z_{i,\ell}-\lambda_{i,\ell} v_i\ra+\sum _{\ell=1}^{k_i} \alpha_{i,\ell}\triangle_{z_{i,\ell}}\f \Big].  \notag
\end{align}

In order to show that $\pi_N(X)\d X$ defined in \eqref{form:pi_N} is invariant for \eqref{eqn:GLE:N-particle}, it suffices to show that $\pi_N(X)$ is a solution of the stationary Fokker-Planck equation
\begin{align} \label{eqn:L^*.pi_N=0}
(\L_{m,\gamma}^N)^*\pi_N(X)=0,
\end{align}
where $(\L_{m,\gamma}^N)^*$ is the dual of $\L_{m,\gamma}^N$, i.e.,
\begin{align*}
\int_{\X} \L_{m,\gamma}^Nf_1(X)\cdot f_2(X) \d X = \int_{\X} f_1(X)\cdot (\L_{m,\gamma}^N)^*f_2(X)\d X,
\end{align*}
for any $f_1,f_2\in C^2_c(\X;\rbb)$. In the absence of the singular potential $G$, this approach was previously employed in \cite{pavliotis2014stochastic} for a finite-dimensional GLE and in \cite{glatt2020generalized} for an infinite-dimensional GLE. With regard to \eqref{eqn:L^*.pi_N=0}, we may simply adapt to the proof of \cite[Proposition 8.2]{pavliotis2014stochastic} tailored to our setting with the appearance of $G$.

Concerning the unique ergodicity of $\pi_N$, we will work with suitable Wasserstein distances allowing for the convenience of measuring the convergent rate toward equilibrium. For a measurable function $V:\X\to(0,\infty)$, we introduce the following weighted supremum norm
\begin{align*}
\|\f\|_V:=\sup_{X\in\X}\frac{|\f(X)|}{1+V(X)}.
\end{align*}
We denote by $\M_V$ the collection of probability measures $\mu$ on Borel subsets of $\X$ such that
\begin{align*}
\int_{\X}V(X)\mu(\d X)<\infty.
\end{align*}
Let $\W_V$ be the corresponding weighted total variation distance in $\M_V$ associated with $\|\cdot\|_V$, given by
\begin{align*}
\W_V(\mu_1,\mu_2)=\sup_{\|\f\|_V\le 1}\Big|\int_{\X} \f(X)\mu_1(\d X)-\int_{\X} \f(X)\mu_2(\d X)\Big|.
\end{align*}
We remark that $\W_V$ is a Wasserstein distance. Indeed, by the dual Kantorovich Theorem 
\begin{align*}
\W_V(\mu_1,\mu_2)= \inf \E \Big[\big( (1+V(X_1))+(1+V(X_2))\big)\boldsymbol{1}\{X_1\neq X_2\}\Big],
\end{align*}
where the infimum runs over all pairs of random variables $(X_1,X_2)$ such that $X_1\sim \mu_1$ and $X_2\sim\mu_2$.
We refer the reader to the monograph \cite{villani2021topics} for a detailed account of Wasserstein distances and optimal transport problems. With this setup, we now state the first main result of the paper, establishing the unique ergodicity of $\pi_N$ defined in \eqref{form:pi_N} as well as the exponential convergent rate toward $\pi_N$.

\begin{theorem} \label{thm:ergodicity:N-particle}
 Under Assumption \ref{cond:U} and Assumption \ref{cond:G}, for every $m>0$ and $\gamma\ge 0$, the probability measure $\pi_N$ defined in \eqref{form:pi_N} is the unique invariant probability measure for \eqref{eqn:GLE:N-particle}. Furthermore, there exists a function $V\in C^2(\X;[1,\infty))$ such that for all $\mu\in\M_V$, 
\begin{align} \label{ineq:geometric-ergodicity}
\W_V\big((P_t^m)^*\mu, \pi_N\big)\le Ce^{-ct}\W_V(\mu,\pi_N),\quad t\ge 0,
\end{align}
for some positive constants $c$ and $C$ independent of $\mu$ and $t$.
\end{theorem}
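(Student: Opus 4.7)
My plan is to combine two ingredients in the spirit of Harris' theorem (or its weak and geometric variants, cf.\ \cite{hairer2011yet,meyn2012markov}): first, the existence of a suitable Lyapunov function $V\in C^2(\X;[1,\infty))$ satisfying a drift inequality of the form $\L_{m,\gamma}^N V \le -c V + K$ on $\X$; second, a minorization/irreducibility estimate on sublevel sets of $V$. Together these yield a spectral-gap type contraction for $P_t^m$ in the weighted Wasserstein distance $\W_V$, and in particular both the uniqueness of the invariant measure and the exponential bound \eqref{ineq:geometric-ergodicity}.

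The invariance of $\pi_N$ itself is essentially a direct computation: one checks that $\pi_N$ solves the stationary Fokker--Planck equation $(\L_{m,\gamma}^N)^* \pi_N = 0$. Given the Hamiltonian form \eqref{form:H_N} and the fluctuation-dissipation structure encoded in \eqref{eqn:GLE:N-particle}, this adapts \cite[Proposition~8.2]{pavliotis2014stochastic}: using $\partial_{v_i} e^{-H_N} = -m v_i e^{-H_N}$ and $\partial_{z_{i,\ell}} e^{-H_N} = -z_{i,\ell}\, e^{-H_N}$, the Hamiltonian (transport) contributions to $(\L_{m,\gamma}^N)^*\pi_N$ cancel, while the dissipative and diffusive parts on $v_i$ and $z_{i,\ell}$ cancel against each other. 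The singular terms $\grad G(x_i-x_j)$ only modify the $x$-dependent drift and respect the same cancellation on $\D$.

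The core difficulty, and the main technical contribution hinted at by Lemmas \ref{lem:Lyapunov:V_N^1}--\ref{lem:Lyapunov:V_N^2}, is the construction of $V$. In the case $\gamma > 0$ I would look for $V$ of the shape $V(X) = e^{\theta H_N(X)} + W(X)$, where $W$ is an auxiliary correction built by adapting \cite{herzog2017ergodicity,lu2019geometric} to control cone-like regions around the singular set $\{x_i = x_j\}$: the $v$-dissipation $-\gamma v$ produces the negative drift on the Hamiltonian piece, and $W$ turns the near-singularity directions (where $G \to +\infty$ with no friction gain) into a strict negative drift. In the degenerate case $\gamma = 0$ this mechanism collapses, since the only dissipation comes from the memory variables. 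Here I would use the heuristic asymptotic-scaling analysis of Section~\ref{sec:ergodicity:single-particle:gamma>0} to detect leading-order terms in the bad regions $\{|\xb|\to 0\}\cup \{|\vb|,|\zb|\to\infty\}$, and then try an anisotropic ansatz such as
\begin{align*}
V(X) = \exp\bigl( \theta H_N(X) + \rho\, \Psi(\xb,\vb,\zb) \bigr),
\end{align*}
with $\Psi$ containing a cross term coupling $v_i$ to $\sum_\ell \lambda_{i,\ell} z_{i,\ell}$, since that is the only channel by which the noise on $\zb$ can transfer dissipation back to $\vb$. The parameters $\theta,\rho$ and the precise shape of $\Psi$ would be tuned so that every leading-order term of $\L_{m,0}^N V$ has a definite sign; Assumption~\ref{cond:U}(iii) ($\lambda = 1$) is exactly what makes this quadratic balance work, and this is the step I expect to be by far the most delicate.

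For the minorization on compact subsets of $\X$, the route is more standard. System \eqref{eqn:GLE:N-particle} is hypoelliptic: the noise vector fields $\partial_{v_i},\partial_{z_{i,\ell}}$ together with iterated Lie brackets with the drift generate the full tangent space of $\X$, so Hörmander's theorem supplies smooth transition densities. Controllability between arbitrary points of $\X$ (while avoiding $\{x_i=x_j\}$) can be established by adapting \cite[Corollary~5.12]{herzog2017ergodicity} and \cite[Proposition~2.5]{mattingly2002ergodicity}, after which the Stroock--Varadhan support theorem upgrades this to positivity of $P_t^m(X_0,\cdot)$ on every nonempty open set. Combining smoothness and positivity of densities gives a uniform minorization on compact subsets. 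Feeding the Lyapunov drift and this minorization into the weighted Harris theorem \cite{hairer2011yet} then produces the unique invariant measure $\pi_N$ and the exponential contraction \eqref{ineq:geometric-ergodicity}.
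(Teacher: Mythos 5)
Your overall architecture coincides with the paper's: invariance of $\pi_N$ via the stationary Fokker--Planck equation adapted from \cite[Proposition 8.2]{pavliotis2014stochastic}, a Lyapunov function split into the cases $\gamma>0$ and $\gamma=0$, minorization via hypoellipticity, controllability and the Stroock--Varadhan support theorem, and finally the weak Harris theorem of \cite{hairer2011yet}. The invariance and minorization parts of your write-up are fine and match Section \ref{sec:ergodicity:proof-of-theorem} and Lemma \ref{lem:minorization}.

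The genuine gap is in the Lyapunov construction, which is the heart of the theorem and which you leave at the level of an ansatz ``to be tuned.'' Two concrete issues. First, for $\gamma>0$ your exponential ansatz $V=e^{\theta H_N}+W$ modeled on \cite{herzog2017ergodicity} relies on a corrector of the type $\psi\propto\la v,\grad(U+G)\ra/|\grad(U+G)|^2$, and the paper points out explicitly that this choice fails for log-type singularities ($\beta_1=1$, e.g.\ the two-dimensional Coulomb potential), which are admitted by Assumption \ref{cond:G}. The paper instead uses the additive, Lu--Mattingly-style perturbation $V_N^1=H_N+\varepsilon m\la\xb,\vb\ra-\varepsilon m\sum_i\la v_i,\sum_{j\neq i}(x_i-x_j)/|x_i-x_j|\ra$, whose unit-vector structure produces the crucial $-|x_i-x_j|^{-\beta_1}$ drift uniformly in $\beta_1\ge 1$ via Lemma \ref{lem:|x_i-x_i|}. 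Second, and more seriously, in the case $\gamma=0$ you correctly identify the cross term $\la v_i,z_{i,1}\ra$ as the only channel transferring dissipation from $\zb$ to $\vb$, but you do not address the obstruction this creates: applying $\L_{m,0}^N$ to $\la v,z_1\ra$ generates the term $\la\grad G(x),z_1\ra$, of size $|z_1|/|x|^{\beta_1}$, which is not controlled by any of the negative terms in your ansatz. The paper's resolution — the actual novelty of Lemma \ref{lem:Lyapunov:V_N^2} — is to multiply the singular perturbation $-\sum_i\la v_i,\sum_{j\neq i}(x_i-x_j)/|x_i-x_j|\ra$ by the weight $\sqrt{Q_R^N}$, which at high energy is bounded below by $cR^3\sum_i|z_{i,1}|$, so that the generator produces a dominating good term $-c\varepsilon R^3\sum_i|z_{i,1}|\sum_{i<j}|x_i-x_j|^{-\beta_1}$. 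Without this (or an equivalent) device, the drift inequality $\L_{m,0}^N V\le -cV+K$ cannot be closed, and the $\gamma=0$ half of the theorem does not follow.
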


In order to establish Theorem \ref{thm:ergodicity:N-particle}, we will draw upon the framework of \cite{bolley2018dynamics, hairer2011yet,herzog2017ergodicity,
lu2019geometric,
mattingly2002ergodicity,
meyn2012markov} tailored to our settings. The argument relies on two crucial ingredients: a suitable Lyapunov function, cf. Definition \ref{def:Lyapunov}, and a minorization condition, cf. Definition \ref{def:minorization}. Since it is not difficult to see that the system \eqref{eqn:GLE:N-particle} satisfies hypoellipticity \cite{ottobre2011asymptotic,
pavliotis2014stochastic}, we may employ a relatively standard argument \cite{
mattingly2002ergodicity,pavliotis2014stochastic} to establish the minorization. On the other hand, constructing Lyapunov functions is quite involved requiring a deeper understanding of the dynamics in the presence of the singular potentials. Particularly, while the construction in the case $\gamma>0$ can be adapted to those in the previous work of \cite{herzog2017ergodicity,
lu2019geometric}, the absence of the viscous drag $\gamma=0$ induces further difficulty owing to the interaction between the singular potentials, the velocity $\vb$ and the auxiliary variables $\{\zb_i\}_{i=1,\dots,N}$. To overcome this issue, we will follow \cite{athreya2012propagating, cooke2017geometric,herzog2015noiseI,
herzog2015noiseII, herzog2017ergodicity,lu2019geometric} and perform the technique of \emph{asymptotic scaling} to determine the leading order terms in the dynamics at large energy states. All of this will be carried out in Section \ref{sec:ergodicity}. The proof of Theorem \ref{thm:ergodicity:N-particle} will be given in Section \ref{sec:ergodicity:proof-of-theorem}.

\subsection{Small mass limit} \label{sec:result:small-mass}
We now consider the topic of small mass limit and rigorously compare the solution of \eqref{eqn:GLE:N-particle} with that of  \eqref{eqn:GLE:N-particle:m=0} as $m\to 0$. 

As mentioned in Section \ref{sec:intro:small-mass}, the derivation of \eqref{eqn:GLE:N-particle:m=0} follows the framework of  \cite{cerrai2020averaging,
herzog2016small,lim2019homogenization,
lim2020homogenization} for finite-dimensional second-order systems as well as \cite{nguyen2018small,shi2021small} for infinite-dimensional systems. The trick employed involves an integration by parts on the $z$-equation of \eqref{eqn:GLE:N-particle}, so as to decouple the velocity from the other processes. As a result, it induces extra drift terms appearing on the right-hand side of the $q$-equation in \eqref{eqn:GLE:N-particle:m=0}. Also, the initial conditions of \eqref{eqn:GLE:N-particle:m=0} are closely related to those of \eqref{eqn:GLE:N-particle}. For the sake of clarity, we defer the explanation in detail to Section \ref{sec:small-mass:heuristic}.

Due to the presence of singular potentials, the first issue arising from \eqref{eqn:GLE:N-particle:m=0} is the well-posedness. To this end, we introduce the new phase space for the solutions of \eqref{eqn:GLE:N-particle:m=0} defined as
\begin{align*}
\Q=\D\times \prod_{i=1}^{N} (\rbb^d)^{k_i},
\end{align*}
where we recall $\D$ being the state space for the displacement as in \eqref{form:D}. The existence and uniqueness of a strong solution in $\Q$ for \eqref{eqn:GLE:N-particle:m=0} are guaranteed in the following auxiliary result. 
\begin{proposition} \label{prop:well-posedness:m=0}  Under Assumption \ref{cond:U} and Assumption \ref{cond:G}, for every initial condition $Q_0=(\qb(0)$, $\fb_{1}(0)$,$\dots$,$\fb_N(0))\in \Q$, system \eqref{eqn:GLE:N-particle:m=0} admits a unique strong solution $Q(t;Q_0)=\big(\qb(t)$, $\fb_{1}(t)$, ..., $\fb_{N}(t)\big)\in\Q$.
\end{proposition}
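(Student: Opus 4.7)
The plan is to combine classical local existence with a Lyapunov function argument ruling out both particle collisions and escape to infinity. On the open set $\Q$, the drifts of \eqref{eqn:GLE:N-particle:m=0} are $C^1$ and the diffusion coefficients are constant, so standard SDE theory yields, for each $Q_0 \in \Q$, a unique strong solution on a maximal stochastic interval $[0,\tau_\infty)$, where
\[
\tau_\infty = \lim_{n\to\infty}\tau_n, \qquad \tau_n = \inf\Big\{t \ge 0 \suchthat |\qb(t)| + \sum_{i=1}^N|\fb_i(t)| \ge n \;\text{ or }\; \min_{i\neq j}|q_i(t) - q_j(t)| \le 1/n\Big\}.
\]
It then suffices to prove $\tau_\infty = \infty$ almost surely, and the goal is to exhibit a coercive $C^2$ function on $\Q$ that blows up on $\partial \Q$ and whose generator is controlled above by an affine function of itself.

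The key structural observation motivating the construction is that \eqref{eqn:GLE:N-particle:m=0} is an overdamped gradient diffusion. Indeed, setting
\[
V(Q) := 1 + \sum_{i=1}^N U(q_i) + \sum_{1 \le i < j \le N} G(q_i - q_j) + \tfrac{1}{2}\sum_{i=1}^N\sum_{\ell=1}^{k_i}|f_{i,\ell} - \lambda_{i,\ell}q_i|^2,
\]
a direct check shows that the drift of the $q_i$-equation equals $-\gamma^{-1}\grad_{q_i} V$ while the drift of the $f_{i,\ell}$-equation equals $-\alpha_{i,\ell}\grad_{f_{i,\ell}} V$, so $V$ plays the role of a potential and $e^{-V}\,\d\qb\,\d\fb$ is (up to normalization) the invariant density for \eqref{eqn:GLE:N-particle:m=0}; this is consistent with the integration by parts $z_{i,\ell} = f_{i,\ell} - \lambda_{i,\ell}q_i$ described in Section \ref{sec:intro:small-mass}. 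By Assumption \ref{cond:G}(i), \eqref{cond:U+G>0}, and $U\ge 1$, the function $V$ is coercive in the required sense: $V(Q) \to \infty$ whenever $\min_{i\neq j}|q_i - q_j| \to 0$ or $|Q| \to \infty$. Applying the generator $\mathcal{L}$ of \eqref{eqn:GLE:N-particle:m=0} to $V$ and using $\triangle_{f_{i,\ell}} V = d$ gives
\[
\mathcal{L}V = -\frac{1}{\gamma}\sum_{i=1}^N|\grad_{q_i}V|^2 - \sum_{i=1}^N\sum_{\ell=1}^{k_i}\alpha_{i,\ell}|f_{i,\ell} - \lambda_{i,\ell}q_i|^2 + \frac{1}{\gamma}\sum_{i=1}^N\triangle_{q_i}V + d\sum_{i=1}^N\sum_{\ell=1}^{k_i}\alpha_{i,\ell}.
\]

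The main obstacle is verifying the Foster--Lyapunov drift condition $\mathcal{L}V \le C(1 + V)$, which reduces to controlling the potentially singular Laplacian contributions $\gamma^{-1}\sum_{j\neq i}\triangle G(q_i - q_j)$ appearing in $\triangle_{q_i} V$ against the dissipative gradient-squared term $-\gamma^{-1}|\grad_{q_i} V|^2$. Under Assumption \ref{cond:G}, $|\grad G(x)|^2 \sim |x|^{-2\beta_1}$ while $|\triangle G(x)| \lesssim |x|^{-\beta_1-1}$ as $|x| \to 0$, so when $\beta_1 > 1$ the dissipation strictly dominates on approach to the collision set. In the borderline case $\beta_1 = 1$ the two quantities are of comparable order and the multi-particle cross-interactions $\la \grad G(q_i-q_j),\grad G(q_i-q_k)\ra$ must be carefully bookkept, using the pointwise estimates deferred to Appendix \ref{sec:appendix} together with Young's inequality to absorb non-singular remainders arising from $\grad U$ and from the $|f_{i,\ell} - \lambda_{i,\ell}q_i|$ term; this is the computationally delicate portion of the argument. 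Once $\mathcal{L}V \le C(1+V)$ is established, Dynkin's formula applied to $V(Q(t\wedge\tau_n))$ yields $\E\,V(Q(t\wedge\tau_n)) \le e^{Ct}(1+V(Q_0))$, and the coercivity of $V$ then forces $\P(\tau_n \le t) \to 0$ as $n \to \infty$ for each finite $t$, whence $\tau_\infty = \infty$ almost surely and the global strong solution exists in $\Q$.
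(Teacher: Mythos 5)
Your overall skeleton (local solvability on the open set $\Q$ plus a Khasminskii-type non-explosion test) matches the paper's strategy, and your observation that \eqref{eqn:GLE:N-particle:m=0} is an overdamped gradient diffusion for the potential $V$ with $z_{i,\ell}=f_{i,\ell}-\lambda_{i,\ell}q_i$ is correct and not made explicit in the paper. However, the proof has a genuine gap at exactly the step you label ``computationally delicate'': the drift inequality $\mathcal{L}V\le C(1+V)$ is asserted, not proved, and for your choice of $V$ it cannot be closed under Assumptions \ref{cond:U}--\ref{cond:G} alone. Two concrete obstructions. First, in the borderline case $\beta_1=1$ the singular It\^o correction is $\gamma^{-1}\sum_{j\neq i}\triangle G(q_i-q_j)\le \gamma^{-1}d\,a_1(1+|q_i-q_j|^{-2})$, while the usable singular dissipation, after expanding $|\grad_{q_i}V|^2$ and applying Lemma \ref{lem:<|x_i-x_j|^s,|x_i-x_ell|^s>} to control the cross-interactions, is only $-\gamma^{-1}\tfrac{4a_4^2}{N(N-1)^2}\sum_{i<j}|q_i-q_j|^{-2}$ up to lower order. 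Nothing in the assumptions relates $a_4^2$ to $a_1d$, so the sign of the singular part of $\mathcal{L}V$ is indeterminate; this is not a bookkeeping issue but a failure of the method for this $V$. Second, isolating $-c|\sum_{j\neq i}\grad G(q_i-q_j)|^2$ from $-|\grad_{q_i}V|^2$ forces you to surrender a remainder of size $|\grad U(q_i)|^2\sim|q_i|^{2\lambda}$, and since $V\sim|q_i|^{\lambda+1}$ this exceeds $C(1+V)$ whenever $\lambda>1$ (which is permitted here because $\gamma>0$); repairing this requires a localization near the collision set that you do not supply.

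The paper's proof avoids both problems by a different choice of Lyapunov function (Lemma \ref{lem:GLE:N-particle:m=0:supnorm}): a quadratic form in $(\qb,\fb)$ perturbed by an $\varepsilon$-small term that is \emph{one power less singular} than $G$, namely $\varepsilon\sum_{i<j}|q_i-q_j|^{-(\beta_1-1)}$ for $\beta_1>1$ and $-\varepsilon\sum_{i<j}\log|q_i-q_j|$ for $\beta_1=1$. There the dominant negative singular term $-c\varepsilon\sum|q_i-q_j|^{-2\beta_1}$ arises from the first-order pairing of $\grad(|x|^{1-\beta_1})$ with $\grad G$ (linear, not quadratic, in $\grad G$, so no $|\grad U|^2$ remainder appears), the It\^o correction of the perturbation is $O(\varepsilon|x|^{-\beta_1-1})$, which is strictly lower order when $\beta_1>1$, and all cross terms carry factors $\varepsilon^{3/2}$ and are absorbed by taking $\varepsilon$ small. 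Note that even the paper needs the extra hypothesis $a_4>1/2$ of Assumption \ref{cond:G:d=1} to close the case $d=1$, $\beta_1=1$ — a sign that the constant-tracking near the collision set is unavoidable and cannot be waved through. To repair your argument, either adopt the paper's $\Gamma_1$/$\Gamma_2$, or carry out your computation in full with a cutoff separating the collision region from the large-$|q|$ region and restrict to the parameter range ($\beta_1>1$, or $\lambda=1$) where your constants actually close.
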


Similar to the proof of Proposition \ref{prop:GLE:N-particle:well-posed}, the argument of Proposition \ref{prop:well-posedness:m=0} follows from the energy estimate established in Lemma \ref{lem:GLE:N-particle:m=0:supnorm} below in Section \ref{sec:small-mass:estimate-m=0}. In turn, the result in Lemma \ref{lem:GLE:N-particle:m=0:supnorm} relies on suitable Lyapunov function specifically designed for \eqref{eqn:GLE:N-particle:m=0}. It will also be employed to study the small mass limit.

We now state the second main result of the paper giving the validity of the approximation of \eqref{eqn:GLE:N-particle} by \eqref{eqn:GLE:N-particle:m=0} in the small mass regime.

\begin{theorem} \label{thm:small-mass:N-particle}
Suppose that Assumption \ref{cond:U}, Assumption \ref{cond:G} and Assumption \ref{cond:G:d=1} hold. For every $(\xb(0)$, $\vb(0)$, $\zb_{1}(0)$,..., $\zb_N(0))\in \X$, let $X_m(t)=\big(\xb_m(t)$, $\vb_m(t)$, $\zb_{1,m}(t)$, ..., $\zb_{N,m}(t)\big)$ be the solution of \eqref{eqn:GLE:N-particle} and let $Q(t)=\big(\qb(t)$, $\fb_{1}(t)$,$\dots$, $\fb_{N}(t)\big)$ be the solution of~\eqref{eqn:GLE:N-particle:m=0} with the following initial condition
\begin{align} \label{cond:initial-condition:(q,f)}
q_i(0) = x_i(0),\quad f_{i,\ell}(0)=z_{i,\ell}(0)+\lambda_{i,\ell} x_i(0),\quad i=1,\dots,N, \quad \ell=1,\dots,k_i.
\end{align}
Then, for every $T,\,\xi>0$, it holds that
\begin{align} \label{lim:small-mass:probability}
\P\Big\{\sup_{t\in[0,T]}|\xb_m(t)-\qb(t)|>\xi\Big\}\rightarrow 0,\quad m\rightarrow 0.
\end{align}

\end{theorem}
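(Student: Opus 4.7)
\medskip

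\noindent\textbf{Proof proposal.} The plan is to first make the heuristic derivation of \eqref{eqn:GLE:N-particle:m=0} rigorous via the change of variable $f_{i,\ell}(t):=z_{i,\ell}(t)+\lambda_{i,\ell} x_{i,m}(t)$, then localize away from singularities and infinity, run a Gronwall-type comparison in the localized window, and finally remove the localization using uniform-in-$m$ Lyapunov estimates. The initial condition \eqref{cond:initial-condition:(q,f)} is dictated precisely by this change of variable. A direct computation using It\^o's formula and the $z_{i,\ell}$-equation in \eqref{eqn:GLE:N-particle} shows that, along the solution of \eqref{eqn:GLE:N-particle}, the process $f_{i,\ell,m}(t):=z_{i,\ell,m}(t)+\lambda_{i,\ell} x_{i,m}(t)$ satisfies exactly the second equation of \eqref{eqn:GLE:N-particle:m=0} (with $x_{i,m}$ in place of $q_i$), while substituting $z_{i,\ell,m}=f_{i,\ell,m}-\lambda_{i,\ell} x_{i,m}$ into the $v_i$-equation and formally sending $m\to0$ produces the first equation of \eqref{eqn:GLE:N-particle:m=0}. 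Hence it suffices to compare $\xb_m$ with $\qb$ and $(\fb_{i,m})_{i,\ell}$ with $(\fb_i)_{i,\ell}$.

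For the localization, introduce for $R>0$ the stopping time
\begin{align*}
\tau_R^m := \inf\Big\{ t\ge 0 : |\xb_m(t)|+|\vb_m(t)|+\textstyle\sum_i|\zb_{i,m}(t)|+\min_{i\neq j}|x_{i,m}(t)-x_{j,m}(t)|^{-1}\ge R\Big\},
\end{align*}
and similarly $\sigma_R$ in terms of $(\qb,\fb_1,\dots,\fb_N)$. Set $\rho_R^m:=\tau_R^m\wedge\sigma_R\wedge T$. On $[0,\rho_R^m]$ the nonlinearities $\grad U$, $\grad G$, and the quadratic drift $-\sum_\ell\lambda_{i,\ell}^2 q_i$ restricted to the relevant region are Lipschitz. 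The first step is to prove that in this Lipschitz window the difference $\eb_m(t):=\xb_m(t)-\qb(t)$ and $\phi_m(t):=\fb_m(t)-\fb(t)$ satisfy $\E\sup_{t\le\rho_R^m}(|\eb_m(t)|^2+|\phi_m(t)|^2)\to 0$ as $m\to0$. This is where the technique of \cite{herzog2016small,nguyen2018small,ottobre2011asymptotic} enters: using the $f$-formulation, the $v$-equation becomes
\begin{align*}
m\,\d v_{i,m} = -\gamma v_{i,m}\d t + b_i(\xb_m,\fb_m)\d t +\sqrt{2\gamma}\,\d W_{i,0},
\end{align*}
with $b_i$ Lipschitz on the localized set. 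Combining an integration by parts identity for $\int_0^t v_{i,m}\d s$ with standard Gronwall reasoning will then show that $m\,v_{i,m}(t)$ and the oscillatory It\^o correction vanish in $L^2$ uniformly on $[0,\rho_R^m]$, yielding the desired convergence of $\eb_m$ and $\phi_m$ in the truncated setting.

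To pass from the truncated convergence to \eqref{lim:small-mass:probability}, write, for each $R,\xi>0$,
\begin{align*}
\P\Big\{\sup_{t\le T}|\xb_m(t)-\qb(t)|>\xi\Big\}
\le \P\{\rho_R^m<T\} + \P\Big\{\sup_{t\le \rho_R^m}|\xb_m(t)-\qb(t)|>\xi\Big\}.
\end{align*}
The second term tends to zero as $m\to0$ by the truncated estimate and Chebyshev. For the first, split $\P\{\rho_R^m<T\}\le \P\{\tau_R^m<T\}+\P\{\sigma_R<T\}$. The term $\P\{\sigma_R<T\}\to0$ as $R\to\infty$ thanks to the Lyapunov estimates for \eqref{eqn:GLE:N-particle:m=0} in Lemma \ref{lem:GLE:N-particle:m=0:supnorm}, which control both the polynomial growth in $\qb$ and the singularity via $\min_{i\neq j}|q_i-q_j|^{-1}$. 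The term $\P\{\tau_R^m<T\}$ requires uniform-in-$m$ moment bounds on $(\xb_m,\vb_m,\zb_m)$ together with an integrability bound on $\min_{i\neq j}|x_{i,m}-x_{j,m}|^{-1}$; these follow from the Lyapunov functions constructed in Lemma \ref{lem:Lyapunov:V_N^1}/\ref{lem:Lyapunov:V_N^2} applied to \eqref{eqn:GLE:N-particle}, provided the Lyapunov constants are tracked to be $m$-uniform for $m$ in a bounded range. Sending first $m\to0$ and then $R\to\infty$ completes the argument.

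The main obstacle is the uniform-in-$m$ control of $\P\{\tau_R^m<T\}$, i.e.\ establishing that the solutions of \eqref{eqn:GLE:N-particle} do not approach the collision set $\{x_i=x_j\}$ nor escape to infinity uniformly as $m\to0$. In the absence of singularities this is routine; here it genuinely requires the Lyapunov functions of Section \ref{sec:ergodicity}, and one must check that these functions can be chosen independently of $m\in(0,1]$. A subtle supplementary difficulty arises exactly in the case $d=1$, $\beta_1=1$, where the logarithmic strength of $G$ is borderline for establishing the non-collision estimate on the limit process $\qb$; this is precisely why Assumption \ref{cond:G:d=1} with $a_4>1/2$ is imposed, and the corresponding estimate enters through Lemma \ref{lem:GLE:N-particle:m=0:supnorm} via the pair interaction bound of Lemma \ref{lem:<|x_i-x_j|^s,|x_i-x_ell|^s>}.
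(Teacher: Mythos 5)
Your overall architecture (change of variables to $f_{i,\ell}$, truncation to a Lipschitz system, Gronwall in the truncated window, then delocalization) matches the paper's through the first two steps, but the delocalization step contains a genuine gap. You split $\P\{\rho_R^m<T\}\le\P\{\tau_R^m<T\}+\P\{\sigma_R<T\}$ and propose to control $\P\{\tau_R^m<T\}$ — the exit probability of the \emph{original} $m$-dependent process — via the Lyapunov functions of Lemma \ref{lem:Lyapunov:V_N^1} and Lemma \ref{lem:Lyapunov:V_N^2}, ``provided the Lyapunov constants are tracked to be $m$-uniform.'' This is precisely the point the paper is at pains to avoid, and it is not a detail one can defer: the generator $\L^N_{m,\gamma}$ carries the diffusion coefficient $\gamma/m^2$ in $v$, and already $\L^N_{m,\gamma}H_N$ produces the constant $\gamma Nd/(2m)$, which blows up as $m\to0$; the admissible $\varepsilon$ in the perturbations also degenerates. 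So uniform-in-$m$ dissipative bounds on \eqref{eqn:GLE:N-particle} do not simply ``follow'' from Section \ref{sec:ergodicity} and would require a separate, nontrivial analysis. Worse, your stopping time $\tau_R^m$ includes $|\vb_m(t)|\ge R$: since the velocity fluctuates on scale $m^{-1/2}$, for fixed $R$ one has $\P\{\tau_R^m<T\}\to1$ as $m\to0$, so your order of limits ($m\to0$ first, then $R\to\infty$) fails outright with this localization. The cutoff must involve only the positions $|\xb|$ and $|x_i-x_j|^{-1}$, which is all the nonlinearities see.

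The paper's way around both problems is the observation it states explicitly: one never needs to control the exit time of $\xb_m$ directly, only that of $\qb$, which is $m$-independent. Concretely, on the event $\{\sup_{t\le T}|\xb_m^R(t)-\qb^R(t)|\le\xi/R\}$, the exit of $\xb_m^R$ from the good region (leaving the ball of radius $R/N^2$ or coming within $N^2/R$ of a collision) forces $\qb$ itself to reach level $\sqrt R$ or to have $\inf_t|q_i-q_j|\le R^{-1/2}$; hence
\begin{align*}
\P\big\{\sigma^R_m<T\le\sigma^R,\ \sup_{t\le T}|\xb_m^R-\qb^R|\le\tfrac{\xi}{R}\big\}
\le \P\Big\{\sup_{t\le T}\Big(|\qb|^2-\varepsilon\close\sum_{1\le i<j\le N}\close\log|q_i-q_j|\Big)\ge \tfrac{1}{2}\varepsilon\log R\Big\},
\end{align*}
which is $O(1/(\varepsilon\log R))$ by Lemma \ref{lem:GLE:N-particle:m=0:supnorm} and Markov, while the complementary event is absorbed into the truncated convergence estimate of Proposition \ref{prop:small-mass:truncating}. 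I recommend you replace your treatment of $\P\{\tau_R^m<T\}$ with this transfer argument (or else supply a genuine uniform-in-$m$ a priori estimate for \eqref{eqn:GLE:N-particle}, which would be a new and substantial result). Your remarks about the role of Assumption \ref{cond:G:d=1} and Lemma \ref{lem:<|x_i-x_j|^s,|x_i-x_ell|^s>} in the case $d=1$, $\beta_1=1$ are accurate.
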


 In order to establish Theorem \ref{thm:small-mass:N-particle}, we will adapt to the approach in \cite{herzog2016small,nguyen2018small} tailored to our settings. The argument can be summarized as follows: we first truncate the nonlinearities in \eqref{eqn:GLE:N-particle} and \eqref{eqn:GLE:N-particle:m=0} while making use of conditions \eqref{cond:U:U''(x)=O(x^lambda-1)} and \eqref{cond:G:grad^2.G(x)<1/|x|^beta}. This results in Lipschitz systems, thus allowing for proving the small mass limit of the truncated systems. We then exploit the moment estimates on \eqref{eqn:GLE:N-particle:m=0}, cf. Lemma \ref{lem:GLE:N-particle:m=0:supnorm}, to remove the Lipschitz assumption. The explicit argument will be carried out in a series of results in Section \ref{sec:small-mass}.
 
Finally, we remark that the convergence in \eqref{lim:small-mass:probability} only holds in probability, but not in $L^p$. Following \cite[Theorem 2.2]{higham2002strong}, the latter is a consequence of an $L^p$ estimate on $\xb_m(t)$ that is uniform with respect to the mass, i.e.,
 \begin{align}\label{lim:small-mass:L^p}
 \limsup_{m\to 0}\sup_{t\in[0,T]}|\xb_m(t)|^p<\infty.
 \end{align}
Due to the singularities, \eqref{lim:small-mass:L^p} is not available in this work, and would require further insight. Therefore, the small mass convergence in $L^p$ remains an open problem.

\section{Geometric Ergodicity} \label{sec:ergodicity}

Throughout the rest of the paper, $c$ and $C$ denote generic positive constants that may change from line to line. The main parameters that they depend on will appear between parenthesis, e.g., $c(T,q)$ is a function of $T$ and $q$. In this section, since we do not take $m\to 0$, we will drop the subscript $m$ in $\xb_m,\vb_m$ and elsewhere for notational convenience.

In this section, we establish the unique ergodicity and the exponential convergent rate toward the Gibbs measure $\pi_N$ defined in \eqref{form:pi_N} for \eqref{eqn:GLE:N-particle}. The argument will rely on the construction of suitable Lyapunov functions while making use of a standard irreducibility condition. For the reader's convenience, we first recall the definitions of these notions below.

\begin{definition} \label{def:Lyapunov}
A function $V\in C^2(\X;[1,\infty))$ is called a \emph{Lyapunov} function for \eqref{eqn:GLE:N-particle} if the followings hold:

(i) $V(X)\to \infty$ whenever $|X|+\sum_{1\le i<j\le N}|x_i-x_j|^{-1}\to \infty$ in $\X$; and

(ii) for all $X\in\X$, 
\begin{align} \label{ineq:Lyapunov}
\mathcal{L}_{m,\gamma}^N V(X)\le -c \mathcal{L}_{m,\gamma}^N V(X)+D, 
\end{align}
for some constants $c>0$ and $D\ge 0$ independent of $X$.

\end{definition}

\begin{definition} \label{def:minorization}
Let $V$ be a Lyapunov function as in Definition \ref{def:Lyapunov}. Denote 
\begin{align*}
\X_R =\big\{X\in\X:V(X)\le R\big\}.
\end{align*}
The system \eqref{eqn:GLE:N-particle} is said to satisfy a \emph{minorization} condition if for all $R$ sufficiently large, there exist positive constants $t_R,\,c_R$, a probability measure $\nu_R$ such that $\nu_R(\X_R)=1$ and for every $X\in\X_R$ and any Borel set $A\subset \X$,
\begin{align} \label{ineq:minorization}
P_{t_R}^m(X,A)\ge c_R\nu_R(A).
\end{align}
\end{definition}

In Section \ref{sec:ergodicity:single-particle}, we provide a heuristic argument on the construction of a Lyapunov function for the instance of single-particle GLE. In particular, we show that our construction works well for this simpler setting, both in the case $\gamma>0$ and $\gamma=0$ through Lemma \ref{lem:Lyapunov:V_1} and Lemma \ref{lem:Lyapunov:V_2}, respectively. In Section \ref{sec:ergodicity:N-particle}, we adapt the construction in the single-particle to the general $N$-particle system \eqref{eqn:GLE:N-particle}, and establish suitable Lyapunov function so as to achieve the desired dissipative estimates. Together with an auxiliary result on minorization, cf. Lemma \ref{lem:minorization}, we will conclude the proof of Theorem \ref{thm:ergodicity:N-particle} in Section \ref{sec:ergodicity:proof-of-theorem}.

\subsection{Single-particle GLE} \label{sec:ergodicity:single-particle} Following the framework developed in \cite{herzog2017ergodicity,lu2019geometric}, we will build up intuition for the construction of Lyapunov functions for the full system \eqref{eqn:GLE:N-particle} by investigating a simpler equation in the absence of interacting forces. More specifically, we introduce the following single-particle GLE ($N=1$) 
\begin{align} \label{eqn:GLE}
\d\, x(t) &= v(t)\d t, \nt  \\
m\,\d\, v(t) & = -\gamma v(t) \d t -\grad \U(x(t))\d t - \grad \G(x(t)) \d t +\sum _{i=1}^k \lambda_i z_i(t)\d t +\sqrt{2\gamma} \,\d W_0(t),\\ 
\d\, z_i(t) &= -\alpha_i z_i(t)\d t-\lambda_i v(t)\d t+\sqrt{2\alpha_i}\,\d W_i(t),\quad i=1,\dots,k.\nt 
\end{align}
Let $\L_{m,\gamma}$ be the generator associated with \eqref{eqn:GLE}. That is for $\gamma\ge 0$,
\begin{align} 
\L_{m,\gamma}\f & = \la \partial_x \f, v\ra + \frac{1}{m}\bigg\la \partial_v \f, -\gamma v-\grad U(x) -\grad \G(x)+\sum_{i=1}^k\lambda_i z_i\bigg\ra \label{form:L}\\
&\qquad  +\frac{\gamma}{m^2} \triangle_v \f + \sum_{i=1}^k\la \partial_{z_i} \f, -\alpha_i z_i-\lambda_i v\ra+\sum _{i=1}^k \alpha_i\triangle_{z_i}\f,  \notag
\end{align}
where $\f=\f(x,v,z_1,\ldots, z_k)\in C^2(\rbb^{(k+2)d})$. Denote $H$
\begin{align} \label{form:H}
H\big(x,v,z_1,\dots,z_k\big) = \U(x)+\G(x)+ \frac{1}{2}m|v|^2+\frac{1}{2}\sum_{i=1}^k |z_i|^2.
\end{align}
Note that $\L_{m,\gamma}\f$ can be written as
\begin{equation}
\label{form2:H}
\L_{m,\gamma} \f=J\nabla H\cdot\nabla \f-\sigma\nabla H\cdot\nabla \f+\mathrm{div}(\sigma\nabla\f),    
\end{equation}
where
\begin{align*}
\nabla H&=\begin{pmatrix}
   \partial_x H\\
   \partial_v H\\
   \partial_{z_1}H\\
   \vdots\\
   \partial_{z_k}H
\end{pmatrix}=\begin{pmatrix}
   \nabla U(x)+\nabla G(x)\\
   mv\\
   z_1\\
   \vdots\\
   z_k
\end{pmatrix},\quad \nabla\f=\begin{pmatrix}
   \partial_x \f\\
   \partial_v \f\\
   \partial_{z_1}\f\\
   \vdots\\
   \partial_{z_k}\f
\end{pmatrix}, \\
J &:= \frac{1}{m}\begin{pmatrix}
0&I &0&\cdots&0\\
-I&0&\lambda_1&\cdots&\lambda_k\\
0&-\lambda_1&0&\cdots&0\\
\vdots&\vdots&\vdots&\ddots&\vdots\\
0&-\lambda_k&0&\cdots&0
\end{pmatrix},\quad\text{and}\quad \sigma=\sigma_\gamma :=\begin{pmatrix}
0&0&0&\cdots&0\\
0&\frac{\gamma}{m^2}&0&\cdots&0\\
0&0&\alpha_1&\cdots&0\\
\vdots&\vdots&\vdots&\ddots&\vdots\\
0&0&0&\cdots&\alpha_k
\end{pmatrix}.
\end{align*} 
We notice that $J$ is anti-symmetric while $\sigma$ is positive semi-definite. This formulation will be more convenient in the subsequent computations. In particular, if $\f$ is linear in $v$ and $z$ then the last term in \eqref{form2:H} vanishes.

Now, there are two cases to be considered depending on the value of the viscous constant $\gamma$, of which, the first case is when $\gamma>0$.  
\subsubsection{Positive viscous constant $\gamma>0$} \label{sec:ergodicity:single-particle:gamma>0} In this case, we observe that system \eqref{eqn:GLE} is almost the same as the following single-particle Langevin equation without the auxiliary memory variables
\begin{align} \label{eqn:Langevin}
\d\, x(t) &= v(t)\d t, \nt  \\
m\,\d\, v(t) & = -\gamma v(t) \d t -\grad \U(x(t))\d t - \grad \G(x(t)) \d t+\sqrt{2\gamma} \,\d W_0(t).
\end{align}
Before discussing the Lyapunov construction for \eqref{eqn:GLE}, it is illuminating to recapitulate the heuristic argument for \eqref{eqn:Langevin} from \cite{herzog2017ergodicity,lu2019geometric,
mattingly2002ergodicity}. As a first ansatz, one can facilitate the Hamiltonian of \eqref{eqn:Langevin} given by
\begin{align*}
\Ht(x,v)=\frac{1}{2}m|v|^2+U(x)+G(x).
\end{align*}
Denoting by $\Lt$ the generator associated with \eqref{eqn:Langevin}, i.e.,
\begin{align} \label{form:L_tilde} 
\Lt\f  = \la \partial_x \f, v\ra + \frac{1}{m}\big\la \partial_v \f, -\gamma v-\grad U(x) -\grad \G(x)\big\ra +\frac{\gamma}{m^2} \triangle_v \f ,
\end{align}
a routine computation shows that
\begin{align*}
\Lt \Ht \le -c|v|^2+D.
\end{align*}
While this is sufficient for the well-posedness of \eqref{eqn:Langevin}, it does not produce the dissipative effect on $x$ when $|x|\to\infty$ and $|x|\to 0$, so as to establish geometric ergodicity. In the absence of the singular potential $G$, one can exploit the trick in \cite{mattingly2002ergodicity} by considering a perturbation of the form
\begin{align} \label{form:Htilde+epsilon<x,v>}
\Ht(x,v)+\varepsilon \la x,v\ra,
\end{align} 
where $\varepsilon$ is sufficiently small. Applying $\Lt$ to the above function while making use of condition \eqref{cond:U:x.U'(x)>-x^(lambda+1)}, it is not difficult to see that 
\begin{align*}
\Lt (\Ht(x,v)+\varepsilon\la x,v\ra) \le -c (|v|^2+U(x))+C.
\end{align*}
On the other hand, in the work of \cite{herzog2017ergodicity}, the following function was introduced
\begin{align*}
\exp\big\{b(\Ht(x,v)+\psi(x,v))\big\},
\end{align*}
where the perturbation $\psi(x,v)$ satisfies
\begin{align*}
\Lt\psi(x,v)\le -C,
\end{align*}
for a sufficiently large constant $C$. In order to construct $\psi$, it is crucial to determine the leading order terms in $\Lt$ when $U(x)+G(x)$ is large while $v$ is being fixed. Following the idea previously presented in \cite[Section 3.2]{herzog2017ergodicity}, in this situation, $\Lt$ is approximated by
\begin{align*}
\Lt\approx \A_1=-\grad (U(x)+ G(x))\cdot\grad_v.
\end{align*}
In turn, this suggests $\psi$ satisfies
\begin{align*}
\A_1\psi\le -C.
\end{align*}
A candidate for the above inequality is given by \cite{herzog2017ergodicity}
\begin{align*}
\psi(x,v)\propto\frac{\la v,\grad (U(x)+G(x))\ra}{|\grad (U(x)+G(x))|^2}.
\end{align*}
We refer the reader to \cite{herzog2017ergodicity} for the derivation of $\psi$ in more detail. While this choice of $\psi$ works well for Lennard-Jones and Riesz potentials, it is not applicable to the class of log functions, of which, the Coulomb potential in dimension $d=2$ is a well-known example. 

With regard to the specific instance of Coulomb potentials \eqref{form:Coulomb}, in the work of \cite{lu2019geometric}, the authors tackle the Lyapunov issue by employing the scaling transformation
\begin{align*}
x = \kappa^{-1}\xhat,\quad v=\vhat,
\end{align*}
for $\kappa>0$. In dimension $d=2$ (with $G(x)=-\log|x|$), $\Lt$ as in \eqref{form:L_tilde} can be recast as
\begin{align*}
\Lt &=  v\cdot \grad_x +\frac{1}{m}\Big(-\gamma v-\grad U(x)+\frac{x}{|x|^2}  \Big)\cdot \grad_v+\frac{\gamma}{m^2}\triangle_{v} \\
& = \kappa\,\vhat \cdot \grad_{\xhat} +\frac{1}{m}\Big(-\gamma \vhat -\grad U(\kappa^{-1}\xhat)+\kappa \frac{\xhat}{|\xhat|^2} \Big)\cdot \grad_{\vhat}+\frac{\gamma}{m^2}\triangle_{\vhat}.
\end{align*}
Recalling condition \eqref{cond:U:U'(x)=O(x^lambda)},  since $|\grad U(\kappa^{-1} \xhat)| \approx \kappa^{-\lambda}|\xhat|^{\lambda}$, which is negligible as $\kappa$ is large, we observe that
\begin{align*}
\Lt &\approx  \kappa\,\vhat \cdot \grad_{\xhat}+\frac{\kappa}{m}\frac{\xhat}{|\xhat|^2}\cdot \grad _{\vhat},\quad \kappa\to\infty.
\end{align*}
Likewise, in dimension $d\ge 3$ (with $G(x)=|x|^{2-d}$),
\begin{align*}
\Lt & = \kappa\,\vhat \cdot \grad_{\xhat} +\frac{1}{m}\Big(-\gamma \vhat -\grad U(\kappa^{-1}\xhat)+2\kappa^{d-1} \frac{\xhat}{|\xhat|^d} \Big)\cdot \grad_{\vhat}+\frac{\gamma}{m^2}\triangle_{\vhat}\\
&\approx \frac{2}{m}\kappa^{d-1}\frac{\xhat}{|\xhat|^2}\cdot \grad _{\vhat}, \quad k\to \infty.
\end{align*}
So, taking $\kappa$ to infinity (i.e., taking $|x|\to0$) indicates
\begin{align*}
\Lt\approx \A_2= \begin{cases} v\cdot\grad_x+\frac{x}{|x|^2}\cdot \grad_v,&d=2,\\
\frac{x}{|x|^d}\cdot \grad_v,& d\ge 3. \end{cases}
\end{align*}
A typical choice of $\psi$ satisfying $\A_2\psi\le -C$ is given by
\begin{align*}
\psi(x,v)\propto -\frac{\la x,v\ra}{|x|}.
\end{align*}
Together with \eqref{form:Htilde+epsilon<x,v>}, we deduce that a Lyapunov function $V$ for \eqref{eqn:Langevin} with Coulomb potentials has the following form \cite{lu2019geometric}
\begin{align*}
V\propto\Ht(x,v)+\varepsilon_1\la x,v\ra-\varepsilon_2\frac{\la x,v\ra}{|x|},
\end{align*}
for some positive constants $\varepsilon_1$ and $\varepsilon_2$ sufficiently small.

Turning back to \eqref{eqn:GLE} in the case $\gamma>0$, motivated by the above discussion, for $\varepsilon>0$, we
introduce the function $V_1$ given by
\begin{align} \label{form:V:gamma>0}
V_1\big(x,v,z_1,\dots,z_k\big):= H\big(x,v,z_1,\dots,z_k\big)+m\varepsilon\la x,v\ra-m\varepsilon\frac{\la x,v\ra}{|x|},
\end{align}
where $H$ is defined in \eqref{form:H}. Since \eqref{eqn:GLE} only differs from \eqref{eqn:Langevin} by the appearance of the linear memory variables $z_i$'s, it turns out that $V_1$ is indeed a Lyapunov function for \eqref{eqn:GLE}. This is summarized in the following lemma.

\begin{lemma} \label{lem:Lyapunov:V_1} Under Assumption \ref{cond:U} and Assumption \ref{cond:G}, let $V_1$ be defined as in \eqref{form:V:gamma>0}. For each $\gamma>0$ and $m>0$, there exists a positive constant $\varepsilon$ sufficiently small such that $V_1$ is a Lyapunov function for \eqref{eqn:GLE}.
\end{lemma}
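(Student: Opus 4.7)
The plan is to verify both conditions in Definition \ref{def:Lyapunov}. Condition (i), properness, follows from Young's inequality: $\bigl|m\varepsilon\langle x,v\rangle - m\varepsilon\langle x,v\rangle/|x|\bigr| \le \tfrac{m}{2}|v|^2 + C_\varepsilon(1+|x|^2)$, so that $V_1 \ge \tfrac{1}{2}H - C_\varepsilon(1+|x|^2)$. Since integrating \eqref{cond:U:x.U'(x)>-x^(lambda+1)} along radii forces $U(x)\ge c|x|^{\lambda+1}-C$ with $\lambda\ge 1$, and $G(x)\to\infty$ as $|x|\to 0$, the Hamiltonian part dominates and $V_1\to\infty$ along any sequence with $|(x,v,z_1,\dots,z_k)|+|x|^{-1}\to\infty$, as required.

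For the drift estimate (ii), I apply $\L_{m,\gamma}$ to each piece of $V_1$ separately. Using representation \eqref{form2:H} with $\f=H$ and the anti-symmetry of $J$, the Hamiltonian flow part vanishes and one is left with
\begin{align*}
\L_{m,\gamma} H = -\gamma|v|^2 - \sum_{i=1}^{k}\alpha_i |z_i|^2 + C_0,
\end{align*}
for a constant $C_0=\gamma d/m+d\sum_i\alpha_i$; this gives dissipation in $v$ and every $z_i$ but none in $x$. Next, $\L_{m,\gamma}(m\varepsilon\langle x,v\rangle)$ yields the principal term $-\varepsilon\langle x,\nabla U(x)\rangle-\varepsilon\langle x,\nabla G(x)\rangle$, bounded above by $-\varepsilon a_2|x|^{\lambda+1}+\varepsilon a_3$ via \eqref{cond:U:x.U'(x)>-x^(lambda+1)} (the contribution $\langle x,\nabla G\rangle$ carries a favorable sign near the origin by \eqref{cond:G:|grad.G(x)+q/|x|^beta_1|<1/|x|^beta_2} and stays bounded elsewhere), plus cross terms $m\varepsilon|v|^2$, $-\gamma\varepsilon\langle x,v\rangle$, and $\varepsilon\sum_i \lambda_i\langle x,z_i\rangle$. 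Finally, $\L_{m,\gamma}(-m\varepsilon\langle x,v\rangle/|x|)$ produces the crucial term $\varepsilon\langle x,\nabla G(x)\rangle/|x|$, which by \eqref{cond:G:|grad.G(x)+q/|x|^beta_1|<1/|x|^beta_2} equals $-\varepsilon a_4|x|^{-\beta_1}$ up to a remainder bounded by $\varepsilon a_5|x|^{-\beta_2}+\varepsilon a_6$, together with the combination $-m\varepsilon|v|^2/|x|+m\varepsilon\langle x,v\rangle^2/|x|^3$ that is $\le 0$ by Cauchy--Schwarz, and lower-order cross terms $\gamma\varepsilon\langle x,v\rangle/|x|$, $\varepsilon\langle x,\nabla U\rangle/|x|$, and $-\varepsilon\sum_i\lambda_i\langle x,z_i\rangle/|x|$.

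Choosing $\varepsilon>0$ small enough, all cross terms are absorbed into the principal dissipations via Young's inequality: $m\varepsilon|v|^2$ into $\gamma|v|^2$; $\gamma\varepsilon\langle x,v\rangle$ against $|v|^2$ and $|x|^{\lambda+1}$; $\varepsilon\lambda_i\langle x,z_i\rangle$ against $\alpha_i|z_i|^2$ and $|x|^{\lambda+1}$; and the $1/|x|$-weighted analogues against the new $|x|^{-\beta_1}$ dissipation (using $\beta_2<\beta_1$), $|v|^2$, and $|z_i|^2$. The outcome is
\begin{align*}
\L_{m,\gamma} V_1 \le -c\Bigl(|v|^2 + \sum_{i=1}^{k}|z_i|^2 + |x|^{\lambda+1}+|x|^{-\beta_1}\Bigr) + D,
\end{align*}
and the growth bound \eqref{cond:U:U'(x)=O(x^lambda)} together with the singularity bound \eqref{cond:G:G<1/|x|^beta} ensure the right-hand side dominates $-c'V_1+D'$, which is precisely \eqref{ineq:Lyapunov}. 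The main obstacle is the calibration of $\varepsilon$: it must be small enough that every cross term generated by the two perturbations is safely absorbed, yet the structure of both perturbations must be preserved so that the two distinct dissipations, polynomial in $|x|$ at infinity and $|x|^{-\beta_1}$ near the singular set, survive with positive coefficients. These are the two effects that the Hamiltonian alone cannot see and that the specific choice of perturbation in \eqref{form:V:gamma>0} is designed to capture.
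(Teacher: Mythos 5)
Your proposal is correct and follows essentially the same route as the paper's proof: decompose $V_1=H+m\varepsilon\la x,v\ra-m\varepsilon\la x,v\ra/|x|$, apply $\L_{m,\gamma}$ termwise using \eqref{form2:H}, extract the four dissipations $-|v|^2$, $-\sum_i|z_i|^2$, $-\varepsilon|x|^{\lambda+1}$, $-\varepsilon|x|^{-\beta_1}$ from \eqref{cond:U:x.U'(x)>-x^(lambda+1)} and \eqref{cond:G:|grad.G(x)+q/|x|^beta_1|<1/|x|^beta_2}, and absorb all cross terms for $\varepsilon$ small. One small correction: in $\L_{m,\gamma}(m\varepsilon\la x,v\ra)$ the contribution $-\varepsilon\la x,\grad G(x)\ra\approx +\varepsilon a_4|x|^{1-\beta_1}$ is positive, hence unfavorable near the origin rather than favorable as you state; it is harmless only because, by \eqref{cond:G:grad.G(x)<1/|x|^beta}, it is of strictly lower order than the $-\varepsilon a_4|x|^{-\beta_1}/2$ dissipation supplied by the third piece and is subsumed there, which is exactly how the paper handles it.
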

\begin{proof}
Letting $\L_{m,\gamma}$ and $H$ respectively be defined in~\eqref{form:L} and \eqref{form:H}, It\^o's formula yields
\begin{align} \label{eqn:L.H}
\L_{m,\gamma} H &=-\sigma\nabla H\cdot\nabla H+\mathrm{div}(\sigma \nabla H)\notag
\\&=-\gamma  |v|^2- \sum_{i=1}^k \alpha_i|z_i|^2+\frac{1}{m}\cdot\frac{d}{2}\gamma +\frac{d}{2}\sum_{i=1}^k \alpha_i.
\end{align}
Similarly, we compute
\begin{align}\label{eqn:L.m<x,v>}
\L_{m,\gamma} \big(m\la x,v\ra\big)= m|v|^2-\gamma\la x,v\ra -\la \grad \U(x)+\grad \G(x),x\ra+\sum_{i=1}^k\lambda_i\la z_i,x\ra.
\end{align}
To estimate the right-hand side above, we recall from condition \eqref{cond:U:x.U'(x)>-x^(lambda+1)} that
\begin{align*}
-\la \grad\U(x),x\ra \le -a_2|x|^{\lambda+1}+a_3.
\end{align*}
With regard to $\grad \G$, condition \eqref{cond:G:grad.G(x)<1/|x|^beta} implies that
\begin{align*}
|\la \grad \G(x),x\ra |\le \frac{a_1}{|x|^{\beta_1-1}}+a_1|x|.
\end{align*}
Concerning the cross terms $\la x,v\ra$ and $\la z_i,x\ra$, for $\varepsilon\in (0,1)$, we employ Cauchy-Schwarz inequality to deduce
\begin{align*}
-\gamma\varepsilon \la x,v\ra +\varepsilon\sum_{i=1}^k\lambda_i\la z_i,x\ra \le \gamma\varepsilon^{1/2}|v|^2+\varepsilon^{1/2} \sum_{i=1}^k\lambda_i^2|z_i|^2 + (k+\gamma)\varepsilon^{3/2}|x|^2.
\end{align*}
Together with~\eqref{eqn:L.m<x,v>}, we find 
\begin{align} \label{ineq:L_gamma.<x,v>:gamma}
\L_{m,\gamma} \big(\varepsilon m\la x,v\ra\big) 
&\le (\varepsilon m+\gamma\varepsilon^{1/2}) |v|^2+\varepsilon^{1/2}\sum_{i=1}^k\lambda_i^2 |z_i|^2 + (k+\gamma)\varepsilon^{3/2}|x|^2\notag\\
&\qquad-a_2\varepsilon|x|^{\lambda+1}+a_3\varepsilon+\varepsilon\frac{a_1}{|x|^{\beta_1-1}}+\varepsilon a_1|x|,
\end{align}
whence,
\begin{align}\label{ineq:L_gamma.<x,v>}
\L_{m,\gamma} \big(\varepsilon m\la x,v\ra\big) 
&\le C\Big(\varepsilon^{1/2} |v|^2+\varepsilon^{1/2}\sum_{i=1}^k |z_i|^2 + \varepsilon^{3/2}|x|^2+\varepsilon\frac{1}{|x|^{\beta_1-1}}+1\Big)-a_2\varepsilon|x|^{\lambda+1}, 
\end{align}
for some positive constant $C$ independent of $\varepsilon$.

Turning to $-\la x,v\ra/|x| $, it holds that
\begin{align}
&\L_{m,\gamma}\bigg(\!-m\frac{\la x,v\ra}{|x|}\bigg) \notag\\
& = -m\frac{|v|^2}{|x|}+m\frac{|\la x,v\ra|^2}{|x|^3}+\gamma \frac{\la x,v\ra}{|x|}+ \frac{\la\grad U(x), x\ra}{|x|}+\frac{\la \grad \G(x),x\ra}{|x|}-\sum_{i=1}^k\lambda_i\frac{\la z_i,x\ra}{|x|}  .\label{eqn:L.m<x,v>/|x|}
\end{align}
It is clear that
\begin{align*}
 -m\frac{|v|^2}{|x|}+m\frac{|\la x,v\ra|^2}{|x|^3}\le 0.
\end{align*}
From condition~\eqref{cond:U:U'(x)=O(x^lambda)}, we readily have
\begin{align*}
 \frac{\la\grad U(x), x\ra}{|x|} \le |\grad \U(x)|\le a_1(1+|x|^\lambda).
\end{align*}
 Also, recalling \eqref{cond:G:|grad.G(x)+q/|x|^beta_1|<1/|x|^beta_2},  
\begin{align} \label{cond:G:<grad.G(x),x>/|x|}
\frac{\la \grad \G(x),x\ra}{|x|} = -\frac{a_4}{|x|^{\beta_1}}+ \frac{\la \grad \G(x)+a_4\frac{x}{|x|^{\beta_1+1}},x\ra}{|x|}& \le  -\frac{a_4}{|x|^{\beta_1}}+\frac{a_5}{|x|^{\beta_2}}+a_6\nt\\
&\le -\frac{a_4}{2|x|^{\beta_1}}+C.
\end{align}
In the last estimate above, we subsumed $|x|^{-\beta_2}$ into $-|x|^{-\beta_1}$ thanks to the fact that $\beta_2\in[0,\beta_1)$ by virtue of the condition \eqref{cond:G:|grad.G(x)+q/|x|^beta_1|<1/|x|^beta_2}. Altogether, we deduce that
\begin{align} \label{ineq:L_gamma.m<x,v>/|x|}
\L_{m,\gamma}\bigg(\!-m\varepsilon\frac{\la x,v\ra}{|x|}\bigg) 
&\le C\varepsilon\Big(|v|+\sum_{i=1}^k |z_i|+|x|^\lambda+ 1 \Big)-\varepsilon\frac{a_4}{2|x|^{\beta_1}}\nt\\
& \le C\Big(\varepsilon^2|v|^2+\varepsilon^2\sum_{i=1}^k |z_i|^2+\varepsilon|x|^\lambda+ 1 \Big)-\varepsilon\frac{a_4}{2|x|^{\beta_1}} .
\end{align}

Now, we combine estimates \eqref{ineq:L_gamma.<x,v>} and \eqref{ineq:L_gamma.m<x,v>/|x|} together with identities \eqref{form:V:gamma>0} and  \eqref{eqn:L.H} to infer
\begin{align*} 
\L_{m,\gamma} V_1 &= \L_{m,\gamma}\bigg(H+m\varepsilon\la x,v\ra-m\varepsilon\frac{\la x,v\ra}{|x|}\bigg) \notag \\
&\le  -c |v|^2- c\sum_{i=1}^k |z_i|^2-c\varepsilon |x|^{\lambda+1}-\varepsilon\frac{c}{|x|^{\beta_1}}+C\\
&\qquad+  C\Big(\varepsilon^{1/2} |v|^2+\varepsilon^{1/2}\sum_{i=1}^k |z_i|^2 + \varepsilon^{3/2}|x|^2+\varepsilon|x|^\lambda+\varepsilon\frac{1}{|x|^{\beta_1-1}}\Big),
\end{align*}
for some positive constants $c,C$ independent of $\varepsilon$. By taking $\varepsilon$ sufficiently small, we observe that the positive non-constant terms on the above right-hand side are dominated by the negative terms. In particular, since $\lambda\ge 1$, $\varepsilon^{3/2}|x|^2$ can be subsumed into $-\varepsilon|x|^{\lambda+1}$. As a consequence, we arrive at
\begin{align} \label{ineq:L_gamma.V_1}
\L_{m,\gamma} V_1 &\le  -c |v|^2- c\sum_{i=1}^k |z_i|^2-c\varepsilon |x|^{\lambda+1}-\varepsilon\frac{c}{|x|^{\beta_1}}+C.
\end{align}
Since $U$ and $G$ are bounded by $|x|^{\lambda+1}+|x|^{-\beta_1}$, \eqref{ineq:L_gamma.V_1} produces the desired Lyapunov property of $V_1$ for \eqref{eqn:GLE}. The proof is thus finished.
\end{proof}

\subsubsection{Zero viscous constant $\gamma=0$} \label{sec:ergodicity:single-particle:gamma=0}

We now turn to the instance $\gamma=0$. In this case, since there is no viscous drag on the right-hand side of the $v$-equation in \eqref{eqn:GLE}, the function $V_1$ defined in \eqref{form:V:gamma>0} does not produce the dissipation in $v$ for large $v$. To circumnavigate this issue, we note that the $z_i$-equation in \eqref{eqn:GLE} still depends on $v$. So, we may exploit this fact to transfer the dissipation from, say $z_1$ to $v$. More specifically, let us consider adding a small perturbation to $V_1$ as follows: 
\begin{align*}
V_1+m\varepsilon\la v,z_1\ra.
\end{align*} 
Denote by $\L_{m,0}$ the generator associated with \eqref{eqn:GLE} when $\gamma=0$. That is, from \eqref{form:L}, we have
\begin{align} 
\L_{m,0}\f & = \la \partial_x \f, v\ra + \frac{1}{m}\bigg\la \partial_v \f,-\grad U(x) -\grad \G(x)+\sum_{i=1}^k\lambda_i z_i\bigg\ra \label{form:L:gamma=0}\\
&\qquad  + \sum_{i=1}^k\la \partial_{z_i} \f, -\alpha_i z_i-\lambda_i v\ra+\sum _{i=1}^k \alpha_i\triangle_{z_i}\f.  \notag
\end{align}
Applying $\L_{m,0}$ to the cross term $\la v,z_1\ra$, although $\L_{m,0}\la v,z_1\ra$ provides the required dissipative effect in $v$, it also induces a cross product $\la \grad G(x),z_1\ra$, which has the order of $|z_1|/|x|^{-\beta_1}$ by virtue of condition \eqref{cond:G:grad.G(x)<1/|x|^beta}. That is
\begin{align} \label{ineq:L_(m,0)<v,z_1>}
\L_{m,0}\la v ,z_1\ra \le -c|v|^2+ \frac{|z_1|}{|x|^{\beta_1}}.
\end{align}
How to annihilate the effect caused by this extra term is the main difficulty that we face in the case $\gamma=0$.

To circumvent the issue, from the proof of Lemma \ref{lem:Lyapunov:V_1}, particularly the estimate \eqref{ineq:L_gamma.m<x,v>/|x|}, we see that 
\begin{align*}
\L_{m,0}\Big(-\frac{\la x,v\ra}{|x|}\Big) \propto -\frac{1}{|x|^{\beta_1}}+ \f(x,v,z_1,\dots,z_k),
\end{align*}
where $\f(x,v,z_1,\dots,z_k)$ consists of lower order terms. This suggests that we look for a perturbation of the form
\begin{align*}
-\frac{\la x,v\ra}{|x|}\psi(x,v,z_1),
\end{align*} 
where $\psi$ satisfies
\begin{align} \label{cond:psi:gamma=0}
\psi\ge c|z_1|,\quad\text{and}\quad |\L_{m,0}\psi|=O(|v|+ |\zb|).
\end{align}
From \eqref{ineq:L_(m,0)<v,z_1>} and \eqref{cond:psi:gamma=0}, the terms $|z_1|/|x|^{\beta_1}$ and $O(|v|+|\zb|)$ suggest that the issue  is where $|v|$, $|\zb|$ are large and $|x|$ is small. To derive $\psi$, it is important to understand the dynamics in $\L_{m,0}$ in this ``bad region". So, we introduce the following scaling transformation
\begin{align*}
(x,v,z_1,\dots,z_k)=(\kappa^{-a}\xhat,\kappa \vhat,\kappa \zhat_1,\kappa\zhat_2,\dots,\kappa\zhat_k),
\end{align*}
for some positive constant $a>1$. Recalling $\L_{m,0}$ as in \eqref{form:L:gamma=0}, under this scaling, we find that
\begin{align*}
\L_{m,0}&= v\cdot \grad_{x}+\frac{1}{m}\Big(-\grad U(x)-\grad G(x)+\sum_{i=1}^k \lambda_iz_i\Big)\cdot \grad_v+ \sum_{i=1}^k \big(-\alpha_iz_i-\lambda_i v\big)\cdot\grad_{z_i}+\sum_{i=1}^k \alpha_i \triangle_{z_i} \\
&= \kappa^{a+1}\vhat\cdot \grad _{\xhat} +\frac{1}{m\kappa}\Big(-\grad U(\kappa^{-a} \xhat)-\grad G(\kappa^{-a} \xhat)+\kappa\sum_{i=1}^k \lambda_i\zhat_i\Big)\cdot \grad_{\vhat}\\
&\qquad +\sum_{i=1}^k \big(-\alpha_i \zhat_i-\lambda_i \vhat\big)\cdot\grad_{\zhat_i}+\kappa^{-2}\sum_{i=1}^k \alpha_i \triangle_{\zhat_i}.
\end{align*}
Recalling condition \eqref{cond:U:U'(x)=O(x^lambda)} and condition \eqref{cond:G:grad.G(x)<1/|x|^beta}, suppose heuristically that
\begin{align*}
-\grad U(\kappa^{-a} \xhat)-\grad G(\kappa^{-a} \xhat) \approx -\kappa^{-a\lambda}\grad U(\xhat)-\kappa^{a\beta_1}\grad G(\xhat),
\end{align*}
implying,
\begin{align*}
\L_{m,0}&\approx \kappa^{a+1} \vhat\cdot\grad_{\xhat} -\kappa^{-a\lambda -1}\grad U(\xhat)\cdot \grad_{\vhat}-\kappa^{a\beta_1-1 } \grad G(\xhat)\cdot \grad_{\vhat}+ \sum_{i=1}^k \zhat_i\cdot \grad_{\vhat}\\
&\qquad-\sum_{i=1}^k (\zhat_i+\vhat\big)\cdot \grad_{\zhat_i}+\kappa^{-2}\sum_{i=1}^k \triangle_{\zhat_i}.
\end{align*}
Observe that
\begin{align*}
\L_{m,0}&\approx \kappa^{a+1} \vhat\cdot\grad_{\xhat}-\kappa^{a\beta_1-1 } \grad G(\xhat)\cdot \grad_{\vhat},\quad \kappa\to \infty.
\end{align*}
In other words, when $\kappa$ is large, the dominant balance of terms in the above transformation is contained in 
\begin{align*}
\A_{m,0} = v\cdot\grad_x-\grad G(x)\cdot \grad _v. 
\end{align*}
Together with the requirement \eqref{cond:psi:gamma=0}, a typical choice for $\psi$ is given by
\begin{align*}
\psi=\sqrt{a_1|z_1|^2+\tfrac{1}{2}m|v|^2+G(x)+U(x)+a_2},
\end{align*}
for some positive constants $a_1,a_2$ to be chosen later. In the above, we note that the appearance of $U(x)$ is to ensure the expression under the square root is positive. In summary, the candidate Lyapunov function for \eqref{eqn:GLE} looks like
\begin{align*}
V_1+m\varepsilon\la v,z_1\ra-m\varepsilon \frac{\la x,v\ra}{|x|}\psi.
\end{align*}  
In Lemma \ref{lem:Lyapunov:V_2} below, we will see that by picking $a_1,a_2$ carefully, we will achieve the Lyapunov effect for \eqref{eqn:GLE}. We finish this discussion by introducing the following function $V_2$ defined for $\varepsilon\in(0,1),\, R>1$,
\begin{align}
&V_2\big(x,v,z_1,\dots,z_k\big) \nt\\
&:=H\big(x,v,z_1,\dots,z_k\big)+ \varepsilon R m\la x,v\ra+\varepsilon R^2 m\la v,z_1\ra-\varepsilon m\frac{\la x,v\ra }{|x|}\sqrt{Q_R}  \label{form:V_2:gamma=0},
\end{align}
where
\begin{align} \label{form:Q_R}
Q_R = R^4|z_1|^2+m|v|^2+2U(x)+2G(x)+R.
\end{align}
\begin{lemma} \label{lem:Lyapunov:V_2} Under Assumption \ref{cond:U} and Assumption \ref{cond:G}, let $V_2$ be defined as in \eqref{form:V_2:gamma=0}. For $\gamma=0$ and every $m>0$, there exist positive constants $\varepsilon$ small and $R$ large enough such that $V_2$ is a Lyapunov function for \eqref{eqn:GLE}.
\end{lemma}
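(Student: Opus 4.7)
The plan is to apply $\L_{m,0}$ to each of the four pieces of $V_2$---namely $H$, $T_2 := \varepsilon R m\la x,v\ra$, $T_3 := \varepsilon R^2 m\la v,z_1\ra$, and $T_4 := -\varepsilon m \la x,v\ra/|x|\cdot\sqrt{Q_R}$---and then combine the resulting contributions, choosing $R$ large first and $\varepsilon$ small afterward. The three perturbations play distinct roles: $T_2$ supplies the dissipation $-|x|^{\lambda+1}$ at infinity, exactly as in Lemma~\ref{lem:Lyapunov:V_1}; $T_3$ transfers dissipation from the $z_1$-equation to the velocity, restoring the $|v|^2$-dissipation lost because $\gamma=0$; and $T_4$, dictated by the asymptotic scaling heuristic of Section~\ref{sec:ergodicity:single-particle:gamma=0}, is the new perturbation tailored to match the singular behavior at $|x|\to 0$ in the regime $|z_1|\to\infty$.

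\textbf{The first three pieces.} A direct It\^o computation gives $\L_{m,0} H=-\sum_i\alpha_i|z_i|^2+\frac{d}{2}\sum_i\alpha_i$ with no $|v|^2$-dissipation because $\gamma=0$. The computation of $\L_{m,0} T_2$ parallels \eqref{ineq:L_gamma.<x,v>:gamma} and yields $-\varepsilon R a_2|x|^{\lambda+1}+\varepsilon Rm|v|^2+(\text{lower order})$. For $\L_{m,0} T_3$, using the $v$- and $z_1$-dynamics one obtains the dominant good term $-\varepsilon R^2m\lambda_1|v|^2$ (which for $R$ large absorbs the bad $\varepsilon Rm|v|^2$ from $\L T_2$), a cross term $-\varepsilon R^2 m\alpha_1\la v,z_1\ra$ controllable by Young's inequality, and---most importantly---the problematic residue $-\varepsilon R^2\la\grad G(x),z_1\ra$, which by \eqref{cond:G:|grad.G(x)+q/|x|^beta_1|<1/|x|^beta_2} can be as large as $\varepsilon R^2 a_4|z_1|/|x|^{\beta_1}$ as $|x|\to 0$. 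Offsetting this is precisely the role of $T_4$.

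\textbf{The $T_4$ term.} Since the factor $-m\la x,v\ra/|x|$ is independent of the diffusion variables $z_i$ and $\gamma=0$ implies no diffusion in $v$, the Leibniz rule gives $\L T_4=\varepsilon\bigl(\L(-m\la x,v\ra/|x|)\bigr)\sqrt{Q_R}+\bigl(-\varepsilon m\la x,v\ra/|x|\bigr)\L\sqrt{Q_R}$ with no carr\'e-du-champ cross term. Following \eqref{eqn:L.m<x,v>/|x|}--\eqref{cond:G:<grad.G(x),x>/|x|}, the first factor has leading order $-a_4/|x|^{\beta_1}$ coming from $\la\grad G(x),x\ra/|x|$, so $\varepsilon(\L f)\sqrt{Q_R}\le -\varepsilon a_4\sqrt{Q_R}/|x|^{\beta_1}+(\text{lower order})$; since $\sqrt{Q_R}\ge R^2|z_1|$ by definition of $Q_R$, this produces a dissipation of order $-\varepsilon a_4 R^2|z_1|/|x|^{\beta_1}$ that exactly matches and cancels the bad contribution from $\L T_3$. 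For the second summand, the crucial observation is the \emph{Hamiltonian-type cancellation} $\L_{m,0}\bigl(m|v|^2+2U(x)+2G(x)\bigr)=2\sum_i\lambda_i\la v,z_i\ra$, in which the singular terms $\la v,\grad G\ra$ from $\L(m|v|^2)$ and $\L(2G)$ annihilate each other. Consequently $\L Q_R$ is dominated by $-2\alpha_1 R^4|z_1|^2$, so $\L\sqrt{Q_R}\approx -\alpha_1 R^2|z_1|$ at leading order, and the product $f\cdot\L\sqrt{Q_R}$ is bounded by $m\varepsilon\alpha_1 R^2|v||z_1|$, absorbable via Young's inequality into the dominant $-\varepsilon R^2 m\lambda_1|v|^2$ and $-\alpha_1|z_1|^2$ dissipations provided $\varepsilon$ is small relative to $R$.

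\textbf{Conclusion and main obstacle.} Collecting all contributions, the leading singular balance cancels between $\L T_3$ and $\L T_4$, the $|v|^2$-balance is dissipative for $R$ large, and the remaining lower-order residues (including the $\varepsilon|z_i|\sqrt{Q_R}$ and $\varepsilon\sqrt{Q_R}$ contributions from $(\L f)\sqrt{Q_R}$, treated via Young and the bound $\sqrt{Q_R}\lesssim R^2|z_1|+m^{1/2}|v|+|x|^{-\beta_1/2}+|x|^{(\lambda+1)/2}+R^{1/2}$) are absorbed by the dominant dissipations $-c|v|^2$, $-c\sum_i|z_i|^2$, $-c|x|^{\lambda+1}$, and $-c/|x|^{\beta_1}$; this last absorption uses crucially $\lambda=1$ from Assumption~\ref{cond:U}(iii), which keeps the $\grad U$ contributions to $\L(-m\la x,v\ra/|x|)$ bounded by $C(1+|x|)$. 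After shifting $V_2$ by a sufficiently large additive constant so that $V_2\ge 1$, and noting that $H$ dominates the perturbations for $\varepsilon$ small so $V_2\to\infty$ on the boundary of $\X$, the Lyapunov inequality \eqref{ineq:Lyapunov} follows. The chief obstacle is the coefficient-level matching at the singular scale: it is essential to have the \emph{square root} $\sqrt{Q_R}$ in $T_4$---rather than $Q_R$ or $|z_1|$ alone---so that $\sqrt{Q_R}\ge R^2|z_1|$ produces exactly the order $R^2|z_1|/|x|^{\beta_1}$ needed to neutralize the bad $-\la\grad G,z_1\ra$ from $T_3$, while simultaneously ensuring that $\L Q_R$ benefits from the Hamiltonian cancellation that keeps $\L\sqrt{Q_R}$ negative at leading order.
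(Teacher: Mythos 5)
Your proposal is correct and follows essentially the same route as the paper: the same four-term decomposition of $V_2$, the same role for each perturbation ($\la x,v\ra$ for spatial dissipation, $\la v,z_1\ra$ to transfer dissipation from $z_1$ to $v$, and $-\tfrac{\la x,v\ra}{|x|}\sqrt{Q_R}$ to generate the term of order $R^2|z_1|/|x|^{\beta_1}$ that neutralizes the singular residue $-\la\grad G(x),z_1\ra$ from the $\la v,z_1\ra$ perturbation), the same absence of a carr\'e-du-champ cross term since the prefactor is $z$-independent and $\gamma=0$, and the same order of quantifiers ($R$ large first, then $\varepsilon$ small). Your explicit remark on the Hamiltonian cancellation inside $\L_{m,0}Q_R$ makes transparent why $\L_{m,0}\sqrt{Q_R}$ contains no singular terms, which the paper's computation exhibits but does not comment on.
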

\begin{proof} Firstly, we note that when $\gamma=0$, by virtue of Assumption~\ref{cond:U}, $\lambda=1$. From the estimate \eqref{ineq:L_gamma.<x,v>:gamma}, we immediately obtain
\begin{align*}
\L_{m,0} \Big(\varepsilon R m\la x,v\ra\Big) 
&\le CR\Big(\varepsilon |v|^2+\varepsilon^{1/2}\sum_{i=1}^k |z_i|^2 + \varepsilon^{3/2}|x|^2+\varepsilon\frac{1}{|x|^{\beta_1-1}}+1\Big)-a_2\varepsilon R|x|^{2}.
\end{align*}
Also, \eqref{eqn:L.H} is reduced to
\begin{align*}
\L_{m,0} H = - \sum_{i=1}^k \alpha_i|z_i|^2+\frac{d}{2}\sum_{i=1}^k \alpha_i.
\end{align*}
As a consequence, we obtain
\begin{align*}
\L_{m,0}\big(H+  \varepsilon Rm\la x,v\ra\big)
&\le   CR\Big(\varepsilon |v|^2+\varepsilon^{1/2}\sum_{i=1}^k |z_i|^2 + \varepsilon^{3/2}|x|^2+\varepsilon\frac{1}{|x|^{\beta_1-1}}+1\Big)-a_2\varepsilon R|x|^{2}\\
&\qquad- \sum_{i=1}^k \alpha_i|z_i|^2+\frac{d}{2}\sum_{i=1}^k \alpha_i.
\end{align*}
By taking $\varepsilon$ sufficiently small, it follows that
\begin{align}
&\L_{m,0}\big(H+ m \varepsilon\la x,v\ra\big) \notag\\
&\le -c\varepsilon R|x|^2  - c\sum_{i=1}^k |z_i|^2 +C\varepsilon R m |v|^2+ C\varepsilon R\frac{1}{|x|^{\beta_1-1}}+CR,\label{ineq:L_0(H+<x,v>-<x,v>/|x|)}
\end{align}
for some positive constants $c,\,C$ independent of $\varepsilon$ and $R$. 

Next, we consider the cross term $\la v,z_1\ra$ on the right-hand side of \eqref{form:V_2:gamma=0}. It\^o's formula yields (recalling $\gamma=0$)
\begin{align} \label{eqn:L_0<v,z_1>}
&\L_{m,0} \big(\varepsilon m\la v,z_1\ra\big) \notag\\
 & = -\varepsilon\la\grad \U(x)+\grad \G(x),z_1\ra+\varepsilon\sum_{i=1}^k \lambda_i\la z_i,z_1\ra-\varepsilon\alpha_1m\la v,z_1\ra-\lambda_1\varepsilon m|v|^2.
\end{align}
We invoke condition~\eqref{cond:U:U'(x)=O(x^lambda)} with Cauchy-Schwarz inequality to infer
\begin{align*}
-\varepsilon\la\grad \U(x),z_1\ra \le \varepsilon a_1(1+|x|)|z_1| \le a_1\varepsilon^{3/2}|x|^2+a_1\varepsilon^{1/2}|z_1|^2 +a_1\varepsilon|z_1| .
\end{align*}
Similarly,
\begin{align*}
\varepsilon\sum_{i=1}^k \lambda_i\la z_i,z_1\ra-\varepsilon\alpha_1m\la v,z_1\ra\le \varepsilon\sum_{i=1}^k \lambda_i^2|z_i|^2+\varepsilon k|z_1|^2+\varepsilon^{1/2}\alpha_1^2m|z_1|^2+\varepsilon^{3/2}m|v|^2.
\end{align*}
Concerning the cross term $\la \grad\G(x),z_1\ra$, recall from \eqref{cond:G:grad.G(x)<1/|x|^beta} that
\begin{align*}
|\grad\G(x)|\le \frac{a_1}{|x|^{\beta_1}}+a_1.
\end{align*}
As a consequence,
\begin{align*}
-\varepsilon\la \grad\G(x),z_1\ra &\le \varepsilon a_1\frac{|z_1|}{|x|^{\beta_1}} +\varepsilon a_1|z_1|\le \varepsilon a_1\frac{|z_1|}{|x|^{\beta_1}} +\frac{1}{2} \varepsilon (|z_1|^2+a_1^2).
\end{align*}
Altogether, we deduce that the bound
\begin{align}  
&\L_{m,0} \big(R^2\varepsilon m\la v,z_1\ra\big) \notag\\
&\le  - \lambda_1\varepsilon R^2 m|v|^2+C\varepsilon R^2  \frac{|z_1|}{|x|^{\beta_1}}  +C\varepsilon^{3/2} R^2\big(|v|^2 +|x|^{2}\big)+C\varepsilon^{1/2} R^2\sum_{i=1}^k |z_i|^2+C\varepsilon R^2, \label{ineq:L_0<v,z_1>}
\end{align}
holds for some positive constant $C$ independent of $R$ and $\varepsilon$.

Turning to the last term on the right-hand side of \eqref{form:V_2:gamma=0}, a routine calculation together with \eqref{eqn:L.m<x,v>/|x|} gives
\begin{align}
&\L_{m,0}\bigg(\!-\varepsilon m\frac{\la x,v\ra}{|x|}\sqrt{Q_R}    \bigg) \notag\\
 &=\varepsilon \bigg(-m\frac{|v|^2|x|^2-|\la x,v\ra|^2}{|x|^3}+ \frac{\la\grad U(x)-\sum_{i=1}^k\lambda_i z_i, x\ra}{|x|}+\frac{\la \grad \G(x),x\ra}{|x|}\bigg)\sqrt{Q_R} \notag\\
 &\qquad - \varepsilon m\frac{\la x,v\ra}{|x|}\cdot\frac{\la v,\sum_{i=1}^k\lambda_i z_i\ra}{\sqrt{Q_R} }\notag  +\varepsilon R^6 m\frac{\la x,v\ra}{|x|}\cdot\frac{  \alpha_1|z_1|^2+\lambda_1\la v,z_1\ra}{\sqrt{Q_R} }\notag\\
 &\qquad +\frac{1}{2}\alpha_1  \varepsilon R^6 m\frac{\la x,v\ra}{|x|}\cdot\frac{1}{\sqrt{Q_R} } \Big(  d- \frac{R^6|z_1|^2}{Q_R}\Big)\notag\\
 &=I_1-I_2+I_3+I_4. \label{eqn:L_0(<v,x>/|x|.sqrt(Q_R)}
\end{align}
Concerning $I_4$, we recall from \eqref{form:Q_R} that
\begin{equation}
\label{eq: QR1}
Q_R\ge R^6|z_1|^2+m|v|^2,
\end{equation}
whence
\begin{align*}
I_4\le \frac{1}{2}\alpha_1  \varepsilon R^6 d \sqrt{m} .
\end{align*}
With regard to $I_1$, we invoke conditions \eqref{cond:U:U'(x)=O(x^lambda)} and \eqref{cond:G:G<1/|x|^beta} to see that
\begin{align*}
Q_R\le R^6|z_1|^2+m|v|^2+a_1\Big(1+|x|^2+\frac{1}{|x|^{\beta_1}}+|x|\Big)+R^2.
\end{align*} 
As a consequence,
\begin{align*}
\varepsilon\frac{\la\grad U(x)-\sum_{i=1}^k\lambda_i z_i, x\ra}{|x|}\sqrt{Q_R} 
&\le \varepsilon\Big[a_1(1+|x|)+\sum_{i=1}^k \lambda_i |z_i|\Big]\sqrt{Q_R}\\
&\le \frac{1}{2}\varepsilon\Big[a_1(1+|x|)+\sum_{i=1}^k \lambda_i |z_i|\Big]^2+\frac{1}{2}\varepsilon Q_R\\
&\le C\varepsilon \Big(R^6\sum_{i=1}^k |z_i|^2+|v|^2+|x|^2+\frac{1}{|x|^{\beta_1}}+R^2\Big).
\end{align*}
On the other hand, estimate \eqref{cond:G:<grad.G(x),x>/|x|} implies the bound
\begin{align*}
\varepsilon\frac{\la \grad \G(x),x\ra}{|x|}\sqrt{Q_R} 
&\le \varepsilon\Big(-\frac{a_4}{2|x|^{\beta_1}}+C\Big)\sqrt{Q_R}\\
&= -\varepsilon\frac{a_4}{2}\cdot\frac{1}{|x|^{\beta_1}}\sqrt{Q_R}+C\varepsilon\sqrt{Q_R}\\
&\le -c \varepsilon \frac{R^3|z_1|+R}{|x|^{\beta_1}}+C\varepsilon \Big(R^6 |z_1|^2+|v|^2+|x|^2+\frac{1}{|x|^{\beta_1}}+R^2\Big).
\end{align*}
It follows that
\begin{align*}
I_1&\le-c \varepsilon \frac{R^3|z_1|+R}{|x|^{\beta_1}}+C\varepsilon \Big(R^6\sum_{i=1}^k |z_i|^2+|v|^2+|x|^2+\frac{1}{|x|^{\beta_1}}+R^2\Big).
\end{align*}
With regard to $I_2$ on the right-hand side of \eqref{eqn:L_0(<v,x>/|x|.sqrt(Q_R)}, since $Q_R\ge m|v|^2$, we find
\begin{align*}
-I_2=-\varepsilon m\frac{\la x,v\ra}{|x|}\cdot\frac{\la v,\sum_{i=1}^k\lambda_i z_i\ra}{\sqrt{Q_R} }\le \varepsilon\sqrt{m}|v|\sum_{i=1}^k \lambda_i|z_i|\le C\varepsilon\Big(|v|^2+\sum_{i=1}^k |z_i|^2\Big).
\end{align*}
Turning to $I_3$, we estimate as follows:
\begin{align*}
I_3&=\varepsilon R^6 m\frac{\la x,v\ra}{|x|}\cdot\bigg(\frac{  \alpha_1|z_1|^2}{\sqrt{Q_R} }+\frac{  \lambda_1\la v,z_1\ra}{\sqrt{Q_R} }\bigg)\\
&\le \varepsilon  R^3 m |v|\cdot \alpha_1|z_1|+\varepsilon R^6\sqrt{m}|v|\cdot \lambda_1|z_1|\\
&\le C\varepsilon(|v|^2+R^{12}|z_1|^2),
\end{align*}
where in the first inequality we have used $\sqrt{Q_R}\geq R^3 |z_1|$ (which follows from \eqref{eq: QR1}) and $\sqrt{Q_R}\geq \sqrt{m}|v|$.
Now, we collect the estimates on $I_j$, $j=1,\dots,4$, together with expression \eqref{eqn:L_0(<v,x>/|x|.sqrt(Q_R)} to infer (recalling $\varepsilon<1<R$)
\begin{align} \label{ineq:L_0(<v,x>/|x|.sqrt(Q_R)}
&\L_{m,0}\bigg(\!-\varepsilon m\frac{\la x,v\ra}{|x|}\sqrt{Q_R}    \bigg) \notag\\
&\le -c \varepsilon \frac{R^3|z_1|+R}{|x|^{\beta_1}}+C\varepsilon \Big(R^{12}\sum_{i=1}^k |z_i|^2+|v|^2+|x|^2+\frac{1}{|x|^{\beta_1}}+R^6\Big) \nt \\
&\le  -c \varepsilon \frac{R^3|z_1|+R}{|x|^{\beta_1}}+C\varepsilon \Big(R^{12}\sum_{i=1}^k |z_i|^2+|v|^2+|x|^2+R^6\Big).
\end{align}
In the last implication above, we subsumed $C\varepsilon|x|^{-\beta_1}$ into $-c\varepsilon R|x|^{-\beta_1}$, by taking $R$ large enough.

Turning back to $V_2$ given by \eqref{form:V_2:gamma=0}, we combine \eqref{ineq:L_0(H+<x,v>-<x,v>/|x|)}, \eqref{ineq:L_0<v,z_1>} and \eqref{ineq:L_0(<v,x>/|x|.sqrt(Q_R)} to arrive at the estimate
\begin{align*}
\L_{m,0} V_2 &\le -c\varepsilon R|x|^2  - c\sum_{i=1}^k |z_i|^2 +C\varepsilon R  |v|^2+ C\varepsilon R\frac{1}{|x|^{\beta_1-1}}+CR\\
&\qquad - c\varepsilon R^2 |v|^2+C\varepsilon R^2  \frac{|z_1|}{|x|^{\beta_1}}  +C\varepsilon^{3/2} R^2\big(|v|^2 +|x|^{2}\big)+C\varepsilon^{1/2} R^2\sum_{i=1}^k |z_i|^2+C\varepsilon R^2\\
&\qquad -c \varepsilon \frac{R^3|z_1|+R}{|x|^{\beta_1}}+C\varepsilon \Big(R^{12}\sum_{i=1}^k |z_i|^2+|v|^2+|x|^2+R^6\Big),
\end{align*}
i.e.,
\begin{align*} 
\L_{m,0} V_2 & \le -(c\varepsilon R^2 -C\varepsilon R-C\varepsilon^{3/2}R^2-C\varepsilon)  |v|^2 - (c-C\varepsilon^{1/2}R^2-C\varepsilon R^{12})\sum_{i=1}^k |z_i|^2\nt \\
&\qquad - ( c\varepsilon R-C\varepsilon^{3/2}R^2-C\varepsilon)|x|^2- c\varepsilon R \frac{1}{|x|^{\beta_1}}-(c\varepsilon R^3-c\varepsilon R^2)\frac{|z_1|}{|x|^{\beta_1}} \nt \\
&\qquad+C\varepsilon R\frac{1}{|x|^{\beta_1-1}}+CR+C\varepsilon R^{6}.
\end{align*}
Since $c,C$ are independent of $\varepsilon$ and $R$, we may take $R$ sufficiently large and then shrink $\varepsilon$ to zero while making use of the fact that $\frac{1}{|x|^{\beta_1-1}}$ can be subsumed into $|x|^2+|x|^{\beta_1}$. It follows that
\begin{align} \label{ineq:L_0.V_2:gamma=0}
\L_{m,0} V_2 & \le -c\varepsilon R^2|v|^2 - c\sum_{i=1}^k |z_i|^2-  c\varepsilon R|x|^2- c\varepsilon R \frac{1}{|x|^{\beta_1}}+C\varepsilon R^{6}.
\end{align}
This produces the Lyapunov property of $V_2$ for \eqref{eqn:GLE} in the case $\gamma=0$, thereby finishing the proof.

\end{proof}

\subsection{N-particle GLEs} \label{sec:ergodicity:N-particle}

We now turn our attention to the full system \eqref{eqn:GLE:N-particle} and construct Lyapunov functions for \eqref{eqn:GLE:N-particle} based on the discussion of the single-particle GLE in Section \ref{sec:ergodicity:single-particle}.

We start with the case $\gamma>0$ and observe that there is a natural generalization of the function $V_1$ defined in \eqref{form:V:gamma>0} to an arbitrary number of particles $N\ge 2$ in an arbitrary number of dimensions $d\ge1$. More specifically, for $\varepsilon>0$, we introduce the following function $V^1_N$ given by
\begin{align} \label{form:V_N^1}
&V_N^1( \xb,\vb,\zb_1,\dots,\zb_N)  \nt \\
&=H_N(\xb,\vb,\zb_1,\dots,\zb_N)+ \varepsilon  m\la \xb,\vb\ra-\varepsilon m \sum_{i=1}^N \Big\la v_i,\sum_{j\neq i}\frac{ x_i-x_j}{|x_i-x_j|}\Big\ra,
\end{align}
where $H_N$ is the Hamiltonian as in \eqref{form:H_N}. In Lemma \ref{lem:Lyapunov:V_N^1}, stated and proven next, we assert that one may pick $\varepsilon$ sufficiently small to ensure the Lyapunov property of $V^1_N$.

\begin{lemma} \label{lem:Lyapunov:V_N^1}
Under Assumption \ref{cond:U} and Assumption \ref{cond:G}, let $V_N^1$ be the function defined as \eqref{form:V_N^1}. For all $\gamma> 0$ and $m>0$, there exists a positive constant $\varepsilon$ sufficiently small such that $V_N^1$ is a Lyapunov function for \eqref{eqn:GLE:N-particle}.
\end{lemma}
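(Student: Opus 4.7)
The argument adapts the three-term construction of Lemma~\ref{lem:Lyapunov:V_1} to the $N$-particle setting, applying $\L_{m,\gamma}^N$ to each summand of $V_N^1$ and choosing $\varepsilon$ small enough that the positive nondissipative terms are absorbed by the dissipative ones. The novelty is that the pairwise forces $\grad G(x_i - x_j)$ in each $v_i$-equation produce cross-particle terms that must be organized carefully.

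For the Hamiltonian summand $H_N$, I would use the representation \eqref{form2:H}: the antisymmetric part $J\nabla H_N\cdot\nabla H_N$ vanishes, and since $\sigma$ has nonzero entries only in the $v$- and $z$-blocks, we obtain $\L_{m,\gamma}^N H_N = -\gamma|\vb|^2 - \sum_{i,\ell}\alpha_{i,\ell}|z_{i,\ell}|^2 + C$, which gives the desired dissipation in $\vb$ and in each $\zb_i$. For the summand $\varepsilon m\la\xb,\vb\ra = \varepsilon m\sum_i\la x_i, v_i\ra$, a direct extension of \eqref{eqn:L.m<x,v>} produces $\varepsilon m|\vb|^2$ together with the drift contributions $-\varepsilon\sum_i\la\grad U(x_i), x_i\ra$ and $-\varepsilon\sum_{i\neq j}\la\grad G(x_i - x_j), x_i\ra$, plus linear cross terms in $(v, z)$. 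Applying \eqref{cond:U:x.U'(x)>-x^(lambda+1)} produces the $-\varepsilon a_2\sum_i|x_i|^{\lambda+1}$ dissipation; the pair sum is symmetrized via $\grad G(-y) = -\grad G(y)$ into $\tfrac12\sum_{i\neq j}\la\grad G(x_i - x_j), x_i - x_j\ra$ and bounded by $C\sum_{i<j}(|x_i-x_j| + |x_i-x_j|^{1-\beta_1})$ through \eqref{cond:G:grad.G(x)<1/|x|^beta}. All remaining cross terms are handled by Cauchy-Schwarz with small factors of $\varepsilon^{1/2}$, in analogy with \eqref{ineq:L_gamma.<x,v>:gamma}.

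The main new computation concerns the third summand. Setting $\hat{r}_{ij} := (x_i - x_j)/|x_i - x_j|$, the pair symmetry lets us rewrite it as $-\varepsilon m\sum_{i<j}\la v_i - v_j, \hat{r}_{ij}\ra$, and I would apply $\L_{m,\gamma}^N$ pair by pair. The $v_k\cdot\partial_{x_k}$ contributions combine (for $k=i, j$) into the nonpositive quantity $-\varepsilon m\sum_{i<j}\bigl(|v_i - v_j|^2 - |\la\hat{r}_{ij}, v_i - v_j\ra|^2\bigr)/|x_i - x_j|$, directly generalizing the corresponding term in \eqref{eqn:L.m<x,v>/|x|}. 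The $\partial_{v_i}$ and $\partial_{v_j}$ contributions together produce, for each pair, the expression
\begin{align*}
\varepsilon\sum_{k\neq i}\la\hat{r}_{ij}, \grad G(x_i - x_k)\ra - \varepsilon\sum_{k\neq j}\la\hat{r}_{ij}, \grad G(x_j - x_k)\ra,
\end{align*}
together with standard cross terms in $\grad U$, the memory variables, and $v$. Isolating the diagonal indices $k = j$ and $k = i$ and using $\grad G(-y) = -\grad G(y)$ with \eqref{cond:G:|grad.G(x)+q/|x|^beta_1|<1/|x|^beta_2} yields the principal dissipative term $-2\varepsilon a_4/|x_i - x_j|^{\beta_1}$ plus a remainder of order $|x_i - x_j|^{-\beta_2}$. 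The remaining three-body cross contributions ($k \neq i, j$) and all other cross terms are then controlled by $\varepsilon$ times a sum involving $|\vb|^2$, $\sum|\zb_i|^2$, $\sum|x_i|^\lambda$, and singular quantities of strictly lower order $|x_i - x_j|^{-\beta_2}$, by invoking \eqref{cond:G:grad.G(x)<1/|x|^beta}, \eqref{cond:U:U'(x)=O(x^lambda)}, and Cauchy-Schwarz.

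Combining the three estimates and using that $\beta_2 < \beta_1$ to subsume $|x_i - x_j|^{-\beta_2}$ into $|x_i - x_j|^{-\beta_1}$ plus constants, I would arrive at
\begin{align*}
\L_{m,\gamma}^N V_N^1 \le -c|\vb|^2 - c\sum_{i,\ell}|z_{i,\ell}|^2 - c\varepsilon\sum_{i=1}^N|x_i|^{\lambda+1} - c\varepsilon\sum_{i<j}\frac{1}{|x_i - x_j|^{\beta_1}} + C,
\end{align*}
after choosing $\varepsilon$ small enough. Since $H_N$ is controlled by these quantities through \eqref{cond:U:U'(x)=O(x^lambda)} and \eqref{cond:G:G<1/|x|^beta}, and since the two perturbations are lower-order corrections of $H_N$, this yields the required Lyapunov inequality for $V_N^1$. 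The main obstacle is the organization of the off-diagonal three-body sum in the third step: one must verify, via the decomposition $\grad G(y) = -a_4 y/|y|^{\beta_1+1} + R(y)$ with $|R(y)|\le a_5|y|^{-\beta_2} + a_6$, that the cross contributions produce only singular remainders of order $|x_i - x_j|^{-\beta_2}$ rather than $|x_i - x_j|^{-\beta_1}$, so that the diagonal dissipation is not overwhelmed; this is precisely where the strict inequality $\beta_2 < \beta_1$ from Assumption~\ref{cond:G} is essential.
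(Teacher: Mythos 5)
Your decomposition of $V_N^1$ into the Hamiltonian, the $\varepsilon m\la\xb,\vb\ra$ perturbation, and the pairwise $-\varepsilon m\sum_i\la v_i,\sum_{j\neq i}(x_i-x_j)/|x_i-x_j|\ra$ perturbation is exactly the paper's, and your handling of the first two summands and of the nonpositive kinetic part of the third is correct. The gap is in your treatment of the three-body contributions $k\neq i,j$ in the third summand. You claim these are ``controlled by \dots singular quantities of strictly lower order $|x_i-x_j|^{-\beta_2}$, by invoking \eqref{cond:G:grad.G(x)<1/|x|^beta}.'' That cannot be right: \eqref{cond:G:grad.G(x)<1/|x|^beta} only gives $|\grad G(x_i-x_k)|\le a_1(1+|x_i-x_k|^{-\beta_1})$, so the pairing $\la \hat r_{ij},\grad G(x_i-x_k)\ra$ is of order $|x_i-x_k|^{-\beta_1}$ — the \emph{same} singular order as the diagonal dissipation $-2\varepsilon a_4|x_i-x_j|^{-\beta_1}$, not order $\beta_2$. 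Only the \emph{remainder} $R(y)=\grad G(y)+a_4 y/|y|^{\beta_1+1}$ is of order $|y|^{-\beta_2}$; the leading part $-a_4 y/|y|^{\beta_1+1}$ contributes three-body terms of full order $\beta_1$. If you bound those in absolute value you get a positive contribution of size roughly $\varepsilon a_4(N-2)\sum_{i<k}|x_i-x_k|^{-\beta_1}$, which for $N\ge 3$ overwhelms the diagonal dissipation, and there is no assumed relation between $a_1$ and $a_4$ that would save you.

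What rescues the argument — and what the paper uses — is a sign, not a size, statement: after substituting $\grad G(y)=-a_4y/|y|^{\beta_1+1}+R(y)$ and summing over \emph{all} pairs and triples, the leading three-body terms are collectively nonpositive, so that
\begin{align*}
-a_4\sum_{i=1}^N\Big\la\sum_{j\neq i}\frac{x_i-x_j}{|x_i-x_j|},\sum_{\ell\neq i}\frac{x_i-x_\ell}{|x_i-x_\ell|^{\beta_1+1}}\Big\ra\le -2a_4\sum_{1\le i<j\le N}\frac{1}{|x_i-x_j|^{\beta_1}}.
\end{align*}
This is precisely Lemma \ref{lem:|x_i-x_i|}, whose proof is a genuine geometric (triangle-angle / Cauchy--Schwarz) argument and is not a consequence of the pointwise bounds in Assumption \ref{cond:G}. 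Your proposal never invokes this lemma or any substitute for it, so the step where ``the diagonal dissipation is not overwhelmed'' is unjustified for $N\ge 3$. (A second, minor point: you twice use $\grad G(-y)=-\grad G(y)$, which is not assumed; the paper avoids this by working with the unsymmetrized sum $\sum_i\la\sum_{j\neq i}\hat r_{ij},\sum_{\ell\neq i}\grad G(x_i-x_\ell)\ra$ directly.) With Lemma \ref{lem:|x_i-x_i|} inserted for the leading part and condition \eqref{cond:G:|grad.G(x)+q/|x|^beta_1|<1/|x|^beta_2} reserved for the genuinely lower-order remainder, the rest of your outline goes through.
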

\begin{proof}
We first consider the Hamiltonian $H_N$ given by \eqref{form:H_N}. Applying $\L_{m,\gamma}^N$ as in \eqref{form:L_N} to $H_N$ gives
\begin{align} \label{eqn:L_gamma^N.H_N}
\L_{m,\gamma}^N H_N= -\gamma|\vb|^2-\sum_{i=1}^N \sum_{\ell=1}^{k_i}\alpha_{i,\ell}|z_{i,\ell}|^2+\frac{1}{2m}\gamma N d+\frac{1}{2}\sum_{i=1}^N\sum_{\ell=1}^{k_i}\alpha_{i,\ell}.
\end{align}
Next, a routine calculation on the cross term $\la \xb,\vb\ra$ gives
\begin{align} \label{eqn:L_gamma^N.<x,v>}
\L_{m,\gamma}^N \big(\varepsilon m\la \xb,\vb\ra\big) &=\varepsilon m|\vb|^2-\varepsilon\gamma\la \xb,\vb\ra+\varepsilon\sum_{i=1}^N \Big\la x_i, \sum_{\ell=1}^{k_i}z_{i,\ell}\Big\ra\\
& \qquad -\varepsilon\sum_{i=1}^N \la x_i,\grad U(x_i)\ra-\varepsilon\close\sum_{1\le i<j\le N}\close\la x_i-x_j,\grad G(x_i-x_j)\ra. \notag
\end{align}
Recalling \eqref{cond:U:x.U'(x)>-x^(lambda+1)}, we readily have
\begin{align*}
-\sum_{i=1}^N \la x_i,\grad U(x_i)\ra \le -a_2\sum_{i=1}^N |x_i|^{\lambda+1}+Na_3.
\end{align*}
Also, from \eqref{cond:G:grad.G(x)<1/|x|^beta}, it holds that
\begin{align*}
-\sum_{1\le i<j\le N}\la x_i-x_j,\grad G(x_i-x_j)\ra \le a_1\Big(N^2+\sum_{1\le i<j\le N}\frac{1}{|x_i-x_j|^{\beta_1-1}}\Big).
\end{align*}
Concerning the other cross terms on the right-hand side of \eqref{eqn:L_gamma^N.<x,v>} we invoke Cauchy-Schwarz inequality to infer
\begin{align*}
-\varepsilon\gamma\la \xb,\vb\ra+\varepsilon\sum_{i=1}^N \Big\la x_i, \sum_{\ell=1}^{k_i}z_{i,\ell}\Big\ra
&\le  C\Big( \varepsilon^{1/2}|\vb|^2+\varepsilon^{1/2}\sum_{i=1}^N|\zb_i|^2+\varepsilon^{3/2}|\xb|^2\Big)
\end{align*}
for some positive constant $C$ independent of $\varepsilon$. We now collect the above estimates together with expression \eqref{eqn:L_gamma^N.<x,v>} to obtain
\begin{align}
\L_{m,\gamma}^N\big( \varepsilon m\la \xb,\vb\ra\big) &\le C\Big( \varepsilon^{1/2}|\vb|^2+\varepsilon^{1/2}\sum_{i=1}^N|\zb_i|^2+\varepsilon^{3/2}|\xb|^2+\varepsilon\close\sum_{1\le i<j\le N}\frac{1}{|x_i-x_j|^{\beta_1-1}}\Big) \notag\\
&\qquad  -a_2\varepsilon\sum_{i=1}^N |x_i|^{\lambda+1} +C. \label{ineq:L_gamma^N.<x,v>}
\end{align}

Next, we turn to the last term on the right-hand side of \eqref{form:V_N^1}. A routine calculation produces
\begin{align} \label{eqn:L_gamma^N.<v,(x_i-x_j)/|x_i-x_j|>}
&\L_{m,\gamma}^N\Big(-m \sum_{i=1}^N \Big\la v_i,\sum_{j\neq i}\frac{ x_i-x_j}{|x_i-x_j|}\Big\ra\Big) \\
&= -m\close\sum_{1\le i<j\le N} \frac{|v_i-v_j|^2}{|x_i-x_j|} +m\close\sum_{1\le i<j\le N} \frac{|\la v_i-v_j,x_i-x_j\ra|^2}{|x_i-x_j|^3} \notag \\
&\qquad+\gamma\close \sum_{1\le i<j\le N} \frac{\la v_i-v_j,x_i-x_j\ra}{|x_i-x_j|} +\sum_{1\le i<j\le N} \frac{\la \grad U(x_i) -\grad U(x_j),x_i-x_j\ra}{|x_i-x_j|}           \notag\\
&\qquad +\sum_{i=1}^N\Big\la  \sum_{j\neq i}\frac{x_i-x_j}{|x_i-x_j|}  ,\sum_{\ell\neq i}\grad G(x_i-x_\ell)    \Big\ra -\sum_{i=1}^N\Big\la  \sum_{j\neq i}\frac{x_i-x_j}{|x_i-x_j|}  ,\sum_{\ell=1}^{k_i} \lambda_{i,\ell}z_{i,\ell}    \Big\ra .    \notag
\end{align}
We proceed to estimate the above right-hand side while making use of Cauchy-Schwarz inequality. It is clear that
\begin{align*}
-m\close\sum_{1\le i<j\le N} \frac{|v_i-v_j|^2}{|x_i-x_j|} +m\close\sum_{1\le i<j\le N} \frac{|\la v_i-v_j,x_i-x_j\ra|^2}{|x_i-x_j|^3}\le 0,
\end{align*}
which is negligible. Also,
\begin{align*}
&\gamma\close \sum_{1\le i<j\le N} \frac{\la v_i-v_j,x_i-x_j\ra}{|x_i-x_j|}  -\sum_{i=1}^N\Big\la  \sum_{j\neq i}\frac{x_i-x_j}{|x_i-x_j|}  ,\sum_{\ell=1}^{k_i} \lambda_{i,\ell}z_{i,\ell}    \Big\ra\\
&\le \gamma (N-1)\sum_{i=1}^N |v_i|+(N-1)\sum_{i=1}^N\sum_{\ell=1}^{k_i} \lambda_{i,\ell}|z_{i,\ell}|  .
\end{align*}
Concerning the cross terms involving $\grad U$, we invoke condition \eqref{cond:U:U'(x)=O(x^lambda)} and obtain
\begin{align*}
\sum_{1\le i<j\le N} \frac{\la \grad U(x_i) -\grad U(x_j),x_i-x_j\ra}{|x_i-x_j|}      
&\le (N-1)a_1\Big(N+\sum_{i=1}^N|x_i|^\lambda\Big). 
\end{align*}
With regard to the cross terms involving $\grad G$, we recast them as follows:
\begin{align*}
\sum_{i=1}^N\Big\la  \sum_{j\neq i}\frac{x_i-x_j}{|x_i-x_j|}  ,\sum_{\ell\neq i}\grad G(x_i-x_\ell)    \Big\ra
&=-a_4\sum_{i=1}^N\Big\la  \sum_{j\neq i}\frac{x_i-x_j}{|x_i-x_j|}  ,\sum_{\ell\neq i}\frac{x_i-x_\ell}{|x_i-x_\ell|^{\beta_1+1}} \Big\ra\\
&+ \sum_{i=1}^N\Big\la  \sum_{j\neq i}\frac{x_i-x_j}{|x_i-x_j|}  ,\sum_{\ell\neq i}\grad G(x_i-x_\ell)+ a_4\frac{x_i-x_\ell}{|x_i-x_\ell|^{\beta_1+1}}   \Big\ra.
\end{align*}
In view of Lemma~\ref{lem:|x_i-x_i|}, we readily have
\begin{align*}
-a_4\sum_{i=1}^N\Big\la  \sum_{j\neq i}\frac{x_i-x_j}{|x_i-x_j|}  ,\sum_{\ell\neq i}\frac{x_i-x_\ell}{|x_i-x_\ell|^{\beta_1+1}} \Big\ra \le -2a_4 \sum_{1\le i<j\le N}\frac{1}{|x_i-x_j|^{\beta_1}}.
\end{align*}
On the other hand, condition~\eqref{cond:G:|grad.G(x)+q/|x|^beta_1|<1/|x|^beta_2} implies the bound
\begin{align*}
&  \sum_{i=1}^N\Big\la  \sum_{j\neq i}\frac{x_i-x_j}{|x_i-x_j|}  ,\sum_{\ell\neq i}\grad G(x_i-x_\ell)+ a_4\frac{x_i-x_\ell}{|x_i-x_\ell|^{\beta_1+1}}   \Big\ra\\
&\qquad\le (N-1)\sum_{i=1}^N\sum_{\ell\neq i}\Big|\grad G(x_i-x_\ell)+ a_4\frac{x_i-x_\ell}{|x_i-x_\ell|^{\beta_1+1}}   \Big|\\
&\qquad\le (N-1)\Big[ 2\sum_{1\le i<\ell\le N}\frac{a_5}{|x_i-x_\ell|^{\beta_2}}+N(N-1)a_6\Big].
\end{align*}
In the above, $a_4,a_5$ and $a_6$ are the constants as in condition \eqref{cond:G:|grad.G(x)+q/|x|^beta_1|<1/|x|^beta_2}. Since $\beta_2\in[0,\beta_1)$, we observe that $|x_i-x_\ell|^{-\beta_2}$ can be subsumed into $-|x_i-x_\ell|^{-\beta_1}$. It follows that
\begin{align} \label{ineq:sum<x_i-x_j,grad.G(x_i-x_j)>}
\sum_{i=1}^N\Big\la  \sum_{j\neq i}\frac{x_i-x_j}{|x_i-x_j|}  ,\sum_{\ell\neq i}\grad G(x_i-x_\ell)    \Big\ra 
 &\le -a_4 \sum_{1\le i<j\le N}\frac{1}{|x_i-x_j|^{\beta_1}}+C.
\end{align}
From the identity \eqref{eqn:L_gamma^N.<v,(x_i-x_j)/|x_i-x_j|>}, we infer the estimate
\begin{align} \label{ineq:L_gamma^N.<v,(x_i-x_j)/|x_i-x_j|>}
&\L_{m,\gamma}^N\Big(- \varepsilon m\sum_{i=1}^N \Big\la v_i,\sum_{j\neq i}\frac{ x_i-x_j}{|x_i-x_j|}\Big\ra\Big) \\
&\le   -a_4\varepsilon\close \sum_{1\le i<j\le N}\frac{1}{|x_i-x_j|^{\beta_1}}+C\varepsilon\Big(1+\sum_{i=1}^N |v_i|+ \sum_{i=1}^N |x_i|^\lambda+\sum_{i=1}^N\sum_{\ell=1}^{k_i} |z_{i,\ell}| \Big)  . \nt 
\end{align}

Now, we collect \eqref{eqn:L_gamma^N.H_N}, \eqref{ineq:L_gamma^N.<x,v>}, \eqref{eqn:L_gamma^N.<v,(x_i-x_j)/|x_i-x_j|>} together with the expression \eqref{form:V_N^1} of $V_N^1$ and deduce
\begin{align}
\L_{m,\gamma}^N V_N^1 &\le -\gamma|\vb|^2-\sum_{i=1}^N \sum_{\ell=1}^{k_i}\alpha_{i,\ell}|z_{i,\ell}|^2 -a_2\varepsilon\sum_{i=1}^N |x_i|^{\lambda+1} -a_4\varepsilon\close \sum_{1\le i<j\le N}\frac{1}{|x_i-x_j|^{\beta_1}}+C\\
&\qquad + C\Big( \varepsilon^{1/2}|\vb|^2+\varepsilon^{1/2}\sum_{i=1}^N|\zb_i|^2+\varepsilon^{3/2}|\xb|^2+\varepsilon\close\sum_{1\le i<j\le N}\frac{1}{|x_i-x_j|^{\beta_1-1}}\Big) \notag\\
&\qquad+C\varepsilon\Big(\sum_{i=1}^N |v_i|+ \sum_{i=1}^N |x_i|^\lambda+\sum_{i=1}^N\sum_{\ell=1}^{k_i} |z_{i,\ell}| \Big) .\nt
\end{align}
In the above, we emphasize that $C$ is a positive constant independent of $\varepsilon$. Finally, by taking $\varepsilon$ sufficiently small, we may infer
\begin{align*}
\L_{m,\gamma}^N V_N^1 &\le -\frac{1}{2}\Big(\gamma|\vb|^2+\sum_{i=1}^N \sum_{\ell=1}^{k_i}\alpha_{i,\ell}|z_{i,\ell}|^2 +a_2\varepsilon\sum_{i=1}^N |x_i|^{\lambda+1} +a_4\varepsilon\close \sum_{1\le i<j\le N}\frac{1}{|x_i-x_j|^{\beta_1}}\Big)+C.
\end{align*}
This produces the desired Lyapunov property of $V_N^1$ for system~\eqref{eqn:GLE:N-particle} in the case $\gamma>0$, as claimed. 
\end{proof}

Turning to the case $\gamma=0$, analogous to the function $V_2$ defined in \eqref{form:V_2:gamma=0} for the single-particle system \eqref{eqn:GLE}, for $\varepsilon\in(0,1)$ and $R>1$, we introduce the function $V_N^2$ given by
\begin{align} \label{form:V_N^2}
V_N^2( \xb,\vb,\zb_1,\dots,\zb_N) 
&=H_N(\xb,\vb,\zb_1,\dots,\zb_N)+ \varepsilon Rm\la \xb,\vb\ra +\varepsilon R^2m \sum_{i=1}^N \la v_i,z_{i,1}\ra\\
&\quad-\varepsilon \Big(\sum_{i=1}^N m\Big\la v_i,\sum_{j\neq i}\frac{ x_i-x_j}{|x_i-x_j|}\Big\ra\Big)\sqrt{Q_R^N},\notag
\end{align}
where
\begin{align} \label{form:Q_R^N}
Q_R^N(\xb,\vb,\zb_1,\dots,\zb_N)=R^6\sum_{i=1}^N|z_{i,1}|^2+m|\vb|^2+2\sum_{i=1}^N U(x_i)+2\close \sum_{1\le i<j\le N}\close G(x_i-x_j)+R^2.
\end{align}
In Lemma \ref{lem:Lyapunov:V_N^2} below, we prove that $V_N^2$ is indeed a Lyapunov function for \eqref{eqn:GLE:N-particle} in the case $\gamma=0$.

\begin{lemma} \label{lem:Lyapunov:V_N^2} Under Assumption \ref{cond:U} and Assumption \ref{cond:G}, let $V_N^2$ be the function defined in \eqref{form:V_N^2}. In the case $\gamma= 0$, for every $m>0$, there exist positive constants $\varepsilon$ small and $R$ large enough such that $V_N^2$ is a Lyapunov function for \eqref{eqn:GLE:N-particle}.
\end{lemma}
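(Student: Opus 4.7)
The plan is to mimic the computation that yielded Lemma \ref{lem:Lyapunov:V_2} for the single particle, while incorporating the $N$-particle book-keeping already used in the proof of Lemma \ref{lem:Lyapunov:V_N^1}. Recall that when $\gamma=0$ Assumption~\ref{cond:U} forces $\lambda=1$, so the good terms produced by $\la\xb,\vb\ra$ will only be quadratic in $|\xb|$. I will apply $\L_{m,0}^N$ separately to each of the four summands of $V_N^2$ and then combine.

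\emph{Step 1: the three standard summands.} Applying $\L_{m,0}^N$ to $H_N$ gives, as in \eqref{eqn:L_gamma^N.H_N} with $\gamma=0$, only dissipation in the $z_{i,\ell}$'s. Applying $\L_{m,0}^N$ to $\varepsilon Rm\la \xb,\vb\ra$ reproduces the computation leading to \eqref{ineq:L_gamma^N.<x,v>} (with $\gamma=0$ and $\lambda=1$), producing a good term $-c\varepsilon R\sum_i|x_i|^2$ together with errors controlled by $C\varepsilon R\bigl(|\vb|^2+\sum_i|\zb_i|^2+\sum_{i<j}|x_i-x_j|^{1-\beta_1}+1\bigr)$. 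For the new cross term $\varepsilon R^2 m\sum_i\la v_i,z_{i,1}\ra$, the same identity that produced \eqref{ineq:L_0<v,z_1>} in the single-particle case, applied particle-by-particle, yields the crucial dissipation $-c\varepsilon R^2 m|\vb|^2$, at the cost of an error of the form $C\varepsilon R^2\sum_{i}\frac{|z_{i,1}|}{|x_i|^{\beta_1}}$ — which actually does not appear in the interacting setting since $\grad U$ acts only via $x_i$ rather than $x_i-x_j$; the only singular-force contribution here is $-\varepsilon R^2\sum_i\la\sum_{j\neq i}\grad G(x_i-x_j),z_{i,1}\ra$, which by \eqref{cond:G:grad.G(x)<1/|x|^beta} is bounded by $C\varepsilon R^2\sum_{i<j}\frac{|z_{i,1}|+|z_{j,1}|}{|x_i-x_j|^{\beta_1}}$.

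\emph{Step 2: the singular perturbation.} The main work is to apply $\L_{m,0}^N$ to
\[
\Psi:=-\varepsilon\Bigl(\sum_{i=1}^N m\Bigl\la v_i,\sum_{j\neq i}\frac{x_i-x_j}{|x_i-x_j|}\Bigr\ra\Bigr)\sqrt{Q_R^N}.
\]
By the product rule, $\L_{m,0}^N\Psi$ splits into three pieces: (a) $\sqrt{Q_R^N}$ times $\L_{m,0}^N$ of the prefactor, (b) the prefactor times $\L_{m,0}^N\sqrt{Q_R^N}$, and (c) the Hessian cross terms coming from the $\triangle_{v_i}$ and $\triangle_{z_{i,\ell}}$ pieces. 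For piece (a), the computation in \eqref{eqn:L_gamma^N.<v,(x_i-x_j)/|x_i-x_j|>}–\eqref{ineq:sum<x_i-x_j,grad.G(x_i-x_j)>} (with $\gamma=0$) produces the dominant good term
\[
-a_4\sum_{1\le i<j\le N}\frac{1}{|x_i-x_j|^{\beta_1}},
\]
plus perturbations controlled by $|\vb|$, $\sum|x_i|$, $\sum|\zb_i|$ and constants. Multiplying by $\sqrt{Q_R^N}$ and using $\sqrt{Q_R^N}\ge R^3|z_{i,1}|$ and $\sqrt{Q_R^N}\ge R$ (from \eqref{form:Q_R^N} and \eqref{cond:U+G>0}), I get a key dissipation
\[
-c\varepsilon\sum_{1\le i<j\le N}\frac{R^3(|z_{i,1}|+|z_{j,1}|)+R}{|x_i-x_j|^{\beta_1}},
\]
which is precisely what kills the residual $\varepsilon R^2\frac{|z_{i,1}|}{|x_i-x_j|^{\beta_1}}$ error from Step~1 once $R$ is large. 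For piece (b), $\L_{m,0}^N\sqrt{Q_R^N}=\frac{\L_{m,0}^N Q_R^N}{2\sqrt{Q_R^N}}$ minus the usual second-order correction; the drift part of $Q_R^N$ is dissipative via the $z_{i,1}$-equation, and the remaining cross terms are bounded by $\sqrt{Q_R^N}$ times polynomial factors in the state. Pairing with the prefactor (which is bounded by $\sqrt{m}\,|\vb|\cdot N(N-1)$) and using $\sqrt{Q_R^N}\ge\sqrt{m}|\vb|$ and $\sqrt{Q_R^N}\ge R^3|z_{i,1}|$ to cancel ratios, all contributions from (b) are of lower order, bounded by $C\varepsilon(|\vb|^2+R^{12}\sum|z_{i,1}|^2+|\xb|^2+R^{12})$. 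Piece (c) is harmless by direct inspection.

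\emph{Step 3: tuning the constants.} Collecting all terms gives, for some constants $c,C>0$ independent of $\varepsilon$ and $R$,
\begin{align*}
\L_{m,0}^N V_N^2 &\le -c\sum_{i=1}^N\sum_{\ell=1}^{k_i}|z_{i,\ell}|^2 - c\varepsilon R^2|\vb|^2 - c\varepsilon R\sum_{i=1}^N|x_i|^2 - c\varepsilon R\sum_{1\le i<j\le N}\frac{1}{|x_i-x_j|^{\beta_1}}\\
&\qquad\quad -(c\varepsilon R^3-C\varepsilon R^2)\sum_{i<j}\frac{|z_{i,1}|+|z_{j,1}|}{|x_i-x_j|^{\beta_1}} + C\varepsilon R\sum_{i<j}\frac{1}{|x_i-x_j|^{\beta_1-1}} + C\varepsilon R^{12},
\end{align*}
plus polynomially bounded error terms of the form $C\varepsilon^{1/2}R^{12}\sum|\zb_i|^2$, $C\varepsilon^{3/2}R^2|\xb|^2$, $C\varepsilon R|\vb|^2$, etc., exactly as in \eqref{ineq:L_0.V_2:gamma=0}. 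First fixing $R$ large enough so that $c\varepsilon R^3-C\varepsilon R^2>0$ and $C\varepsilon R|x_i-x_j|^{1-\beta_1}$ is absorbed into $-c\varepsilon R|x_i-x_j|^{-\beta_1}+c\varepsilon R|x_i|^2+c\varepsilon R|x_j|^2$ via Young's inequality, and then choosing $\varepsilon$ small enough so that every error of sub-leading order in $\varepsilon$ is dominated by the corresponding good term, I arrive at
\[
\L_{m,0}^N V_N^2 \le -c\Bigl(|\vb|^2+\sum_{i,\ell}|z_{i,\ell}|^2+\sum_i|x_i|^2+\sum_{i<j}\frac{1}{|x_i-x_j|^{\beta_1}}\Bigr)+D,
\]
which, together with \eqref{form:H_N} and Assumption~\ref{cond:G}, gives the Lyapunov inequality \eqref{ineq:Lyapunov} (with $\lambda=1$). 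Positivity and coercivity of $V_N^2$ follow by taking $\varepsilon$ small enough so that $\varepsilon Rm\la\xb,\vb\ra$, $\varepsilon R^2 m\la v_i,z_{i,1}\ra$, and $-\varepsilon m\la v_i,\frac{x_i-x_j}{|x_i-x_j|}\ra\sqrt{Q_R^N}$ are each absorbed into $\tfrac{1}{2}H_N$; here Remark~\ref{rem:G} and the presence of $R$ inside $Q_R^N$ ensure $\sqrt{Q_R^N}\le C(1+H_N^{1/2})$, so the last summand is controlled by $\tfrac12 m|\vb|^2+C(1+H_N)$.

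\emph{Main obstacle.} The delicate point is the accounting in piece~(a)–(b) of Step~2: one must check that the $\grad G$ contribution to $\L_{m,0}^N(\text{prefactor})\cdot\sqrt{Q_R^N}$ generates a singular dissipation of strength proportional to $R^3|z_{i,1}|/|x_i-x_j|^{\beta_1}$ and $R/|x_i-x_j|^{\beta_1}$ whose powers of $R$ beat every competing $R^2$ bad term from Step~1. This forces the specific scaling $R^2,R^4,R^6$ appearing in \eqref{form:V_N^2}–\eqref{form:Q_R^N}; once these are chosen as above, the remaining smallness in $\varepsilon$ mops up the lower order perturbations exactly as in the single-particle proof.
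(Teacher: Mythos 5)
Your proposal is correct and follows essentially the same route as the paper's proof: apply $\L_{m,0}^N$ separately to $H_N$, $\varepsilon Rm\la\xb,\vb\ra$, $\varepsilon R^2 m\sum_i\la v_i,z_{i,1}\ra$ and the weighted singular perturbation, split the last one by the product rule into the two pieces the paper calls $I_1$ and $I_2$, use $\sqrt{Q_R^N}\ge R^3|z_{i,1}|$ and $\sqrt{Q_R^N}\ge R$ to generate the dissipation $-c\varepsilon(R^3|z_{i,1}|+R)|x_i-x_j|^{-\beta_1}$ that beats the $R^2$ bad term, and then tune $R$ large followed by $\varepsilon$ small. (Your piece (c) is in fact identically zero since the prefactor is $z$-independent and there is no diffusion in $v$ when $\gamma=0$, which is why the paper omits it.)
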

\begin{proof}
We first consider the Hamiltonian $H_N$ as in \eqref{form:H_N}. Since $\gamma=0$, the identity \eqref{eqn:L_gamma^N.H_N} is reduced to 
\begin{align} \label{eqn:L_0^N.H_N}
\L_{m,0}^N H_N=-\sum_{i=1}^N \sum_{\ell=1}^{k_i}\alpha_{i,\ell}|z_{i,\ell}|^2+\frac{1}{2}\sum_{i=1}^N\sum_{\ell=1}^{k_i}\alpha_{i,\ell}.
\end{align}

With regard to the cross term $\la \xb,\vb\ra$, 
we note that since $\gamma=0$, by virtue of Assumption \ref{cond:U}, $\lambda=1$. 
We employ an argument similarly to that of the estimate \eqref{ineq:L_gamma^N.<x,v>} and obtain the bound
\begin{align}
\L_{m,0}^N\big( \varepsilon Rm\la \xb,\vb\ra\big) &\le   -a_2\varepsilon R|\xb|^{2}+ C R\Big( \varepsilon|\vb|^2+\varepsilon^{1/2}\sum_{i=1}^N|\zb_i|^2+\varepsilon^{3/2}|\xb|^2+1\Big) \label{ineq:L_0^N.<x,v>} \\
&\qquad +C\varepsilon R\close\sum_{1\le i<j\le N}\frac{1}{|x_i-x_j|^{\beta_1-1}}.\nt
\end{align}
In the above, $C$ is a positive constant independent of $\varepsilon$ and $R$.

Concerning the cross terms $\la v_i,z_{i,1}\ra$, $i=1,\dots,N$, on the right-hand side of \eqref{form:V_N^2}, applying It\^o's formula gives
\begin{align*}
\L_{m,0}^N \big(m\la v_i,z_{i,1}\big)\ra &=\Big\la -\grad U(x_i)-\sum_{j\neq i}\grad G(x_i-x_j),z_{i,1}\Big\ra +\sum_{\ell=1}^{k_i}\lambda_{i,\ell}\la z_{i,\ell}, z_{i,1}\ra\\
&\qquad -\alpha_{i,1}m\la v_i,z_{i,1}\ra- \lambda_{i,1}m|v_i|^2.
\end{align*}
From the condition \eqref{cond:U:U'(x)=O(x^lambda)} ($\lambda=1$), we have
\begin{align*}
-\varepsilon\la \grad U(x_i),z_{i,1}\ra \le   a_1 \varepsilon(|x|+1)|z_{i,1}|\le C(\varepsilon^{3/2}|x_i|^2+\varepsilon^{1/2}|z_{i,1}|^2+\varepsilon). 
\end{align*}
In the last estimate above, we employed Cauchy-Schwarz inequality. Likewise,
\begin{align*}
\varepsilon\sum_{\ell=1}^{k_i}\lambda_{i,\ell}\la z_{i,\ell}, z_{i,1}\ra -\varepsilon\alpha_{i,1}m\la v_i,z_{i,1}\ra \le C\varepsilon^{1/2}|\zb_i|^2+C\varepsilon^{3/2}|v_i|^2.
\end{align*}
Also, condition \eqref{cond:G:grad.G(x)<1/|x|^beta} implies
\begin{align*}
-\sum_{i=1}^N\Big\la\sum_{j\neq i}\grad G(x_i-x_j),z_{i,1}\Big\ra &\le a_1\sum_{i=1}^N|z_{i,1}|\Big(2\close\sum_{1\le i<j\le N}\frac{1}{|x_i-x_j|^{\beta_1}}+N^2\Big)\\
&\le C\sum_{i=1}^N|z_{i,1}|\sum_{1\le i<j\le N}\frac{1}{|x_i-x_j|^{\beta_1}}+C\sum_{i=1}^N|\zb_{i}|^2+C.
\end{align*} 
It follows that for $\varepsilon\in (0,1)$ and $R>1$,
\begin{align}\label{ineq:L_0^N.<v,z>}
\L_{m,0}^N \Big(\varepsilon R^2m\sum_{i=1}^N\la v_i,z_{i,1}\ra\Big) &\le -\varepsilon R^2m \sum_{i=1}^N \lambda_{i,1}|v_i|^2+C R^2 \Big( \varepsilon^{3/2}( |\xb|^2+|\vb|^2)+\varepsilon^{1/2}\sum_{i=1}^N |\zb_i|^2+\varepsilon\Big) \nt \\
&\qquad +C\varepsilon R^2\Big(\sum_{i=1}^N|\zb_{i}|\sum_{1\le i<j\le N}\frac{1}{|x_i-x_j|^{\beta_1}}+\sum_{i=1}^N|\zb_{i}|^2+1\Big).
\end{align}
In the above, we emphasize again that the positive constant $C$ does not depend on $\varepsilon$ and $R$.

Next, we consider the last term on the right-hand side of \eqref{form:V_N^2}. Observe that
\begin{align*}
\sum_{i=1}^N \Big\la v_i,\sum_{j\neq i}\frac{ x_i-x_j}{|x_i-x_j|}\Big\ra = \sum_{1\le i<j\le N}\close\frac{\la v_i-v_j,x_i-x_j\ra}{|x_i-x_j|}.
\end{align*}
So,
\begin{align} \label{eqn:L_0^N.<v,(x_i-x_j)>sqrt(Q_R^N)}
&\L_{m,0}^N \Big(-\varepsilon\sum_{i=1}^N m\Big\la v_i,\sum_{j\neq i}\frac{ x_i-x_j}{|x_i-x_j|}\Big\ra\Big) \sqrt{Q_R^N}\Big)\nt\\
&=-\varepsilon \sqrt{Q_R^N}\,\L_{m,0}^N\Big( \sum_{i=1}^N m \Big\la v_i,\sum_{j\neq i}\frac{ x_i-x_j}{|x_i-x_j|}\Big\ra\Big)-\varepsilon m\close \sum_{1\le i<j\le N}\close\frac{\la v_i-v_j,x_i-x_j\ra}{|x_i-x_j|} \L_{m,0}^N\sqrt{Q_R^N}  \nt \\
&=I_1+I_2.
\end{align}
Concerning $I_1$, from the estimate \eqref{ineq:L_gamma^N.<v,(x_i-x_j)/|x_i-x_j|>} (with $\lambda=1$), we have
\begin{align*}
I_1&= \sqrt{Q_R^N}\,\L_{m,0}^N \Big(-\varepsilon\sum_{i=1}^N m \Big\la v_i,\sum_{j\neq i}\frac{ x_i-x_j}{|x_i-x_j|}\Big\ra\Big)  \\
&\le   -a_4\varepsilon \sqrt{Q_R^N}\close \sum_{1\le i<j\le N}\frac{1}{|x_i-x_j|^{\beta_1}}+C\varepsilon\Big(1+ |\vb|+ |\xb|+\sum_{i=1}^N\sum_{\ell=1}^{k_i} |z_{i,\ell}| \Big)\sqrt{Q_R^N}  . \nt 
\end{align*}
Recalling $Q_R^N$ given by \eqref{form:Q_R^N}
\begin{align*}
Q_R^N=R^6\sum_{i=1}^N|z_{i,1}|^2+m|\vb|^2+2\sum_{i=1}^N U(x_i)+2\close \sum_{1\le i<j\le N}\close G(x_i-x_j)+R^2,
\end{align*}
it is clear that
\begin{align*}
\sqrt{Q_R^N}\ge \Big(R^6\sum_{i=1}^N |z_{i,1}|^2+R^2\Big)^{1/2}\ge  c R^3\sum_{i=1}^N |z_{i,1}|+cR,
\end{align*}
whence
\begin{align*}
-a_4\varepsilon \sqrt{Q_R^N}\close \sum_{1\le i<j\le N}\frac{1}{|x_i-x_j|^{\beta_1}} \le -c\varepsilon R^3\sum_{i=1}^N |z_{i,1}|\close \sum_{1\le i<j\le N}\frac{1}{|x_i-x_j|^{\beta_1}} - c\varepsilon R \close \sum_{1\le i<j\le N}\frac{1}{|x_i-x_j|^{\beta_1}} .
\end{align*}
On the other hand, in view of conditions \eqref{cond:U:U'(x)=O(x^lambda)} and \eqref{cond:G:G<1/|x|^beta}, it holds that
\begin{align*}
\sqrt{Q_R^N} \le C\Big(R^3\sum_{i=1}^N |z_{i,1}|+\sqrt{m}|\vb|+|\xb|+\Big(\sum_{1\le i<j\le N}\frac{1}{|x_i-x_j|^{\beta_1}}\Big)^{1/2}+R\Big).
\end{align*}
It follows that
\begin{align*}
&\varepsilon\Big(1+|\vb|+ |\xb|+\sum_{i=1}^N\sum_{\ell=1}^{k_i} |z_{i,\ell}| \Big)\sqrt{Q_R^N} \\
& \le C\,\varepsilon \Big( R^6 \sum_{i=1}^N |\zb_i|^2+|\vb|^2+|\xb|^2+\close \sum_{1\le i<j\le N}\frac{1}{|x_i-x_j|^{\beta_1}}+R^2\Big).
\end{align*}
As a consequence,
\begin{align} \label{ineq:L_0^N.<v,(x_i-x_j)>sqrt(Q_R^N):I_1}
I_1&\le   -a_4\varepsilon \sqrt{Q_R^N}\close \sum_{1\le i<j\le N}\frac{1}{|x_i-x_j|^{\beta_1}}+C\varepsilon\Big(1+ |\vb|+ |\xb|+\sum_{i=1}^N\sum_{\ell=1}^{k_i} |z_{i,\ell}| \Big)\sqrt{Q_R^N} \nt \\
&\le -c\varepsilon R^3\sum_{i=1}^N |z_{i,1}|\close \sum_{1\le i<j\le N}\frac{1}{|x_i-x_j|^{\beta_1}} - c\varepsilon R \close \sum_{1\le i<j\le N}\frac{1}{|x_i-x_j|^{\beta_1}} \nt \\
&\qquad +C\,\varepsilon \Big( R^6 \sum_{i=1}^N |\zb_i|^2+|\vb|^2+|\xb|^2+\close \sum_{1\le i<j\le N}\frac{1}{|x_i-x_j|^{\beta_1}}+R^2\Big),
\end{align}
holds for some positive constants $c,\, C$ independent of $\varepsilon$ and $R$.

With regard to $I_2$ on the right-hand side of \eqref{eqn:L_0^N.<v,(x_i-x_j)>sqrt(Q_R^N)}, It\^o's formula yields the identity
\begin{align*}
\L_{m,0}^N\sqrt{Q_R^N}  &= \frac{1}{\sqrt{Q_R^N}}\sum_{i=1}^N\Big[ \Big\la v_i,\sum_{\ell=1}^{k_i}\lambda_{i,\ell} z_{i,\ell}\Big\ra-R^6\alpha_{i,1} |z_{i,1}|^2-R^6\la z_{i,\ell},v_i\ra+\tfrac{1}{2}\alpha_{i,1}d-\frac{1}{2Q_R^N} \alpha_{i,1}|z_{i,1}|^2\Big].
\end{align*}
Since $$\sum_{i=1}^N U(x_i)+\sum_{1\le i<j\le N}G(x_i-x_j)\ge 0,$$
we deduce the bound
\begin{align*}
|\L_{m,0}^N\sqrt{Q_R^N}| \le C R^6 \frac{|\vb|\sum_{i=1}^N |\zb_i|+\sum_{i=1}^N |\zb_i|^2+1}{(R^6\sum_{i=1}^N |\zb_i|^2+|\vb|^2+1)^{1/2}}  \le C R^6 \Big(\sum_{i=1}^N |\zb_i|+1\Big).
\end{align*}
It follows that $I_2$ satisfies
\begin{align*}
I_2 =-\varepsilon m\close \sum_{1\le i<j\le N}\close\frac{\la v_i-v_j,x_i-x_j\ra}{|x_i-x_j|} \L_{m,0}^N\sqrt{Q_R^N} 
&\le C\varepsilon\close \sum_{1\le i<j\le N}|v_i-v_j| \cdot R^6 \Big(\sum_{i=1}^N |\zb_i|+1\Big)  \\
&\le C\varepsilon R^6\sum_{i=1}^N|v_i| \Big(\sum_{i=1}^N |\zb_i|+1\Big),
\end{align*}
whence
\begin{align}\label{ineq:L_0^N.<v,(x_i-x_j)>sqrt(Q_R^N):I_2}
I_2&\le C\varepsilon \Big(|\vb|^2+ R^{12} \sum_{i=1}^N |\zb_i|^2+ R^{12}\big).
\end{align}
Now, we collect \eqref{ineq:L_0^N.<v,(x_i-x_j)>sqrt(Q_R^N):I_1}, \eqref{ineq:L_0^N.<v,(x_i-x_j)>sqrt(Q_R^N):I_2} together with \eqref{eqn:L_0^N.<v,(x_i-x_j)>sqrt(Q_R^N)} to arrive at the bound
\begin{align*}
&\L_{m,0}^N \Big(-\varepsilon\sum_{i=1}^N m\Big\la v_i,\sum_{j\neq i}\frac{ x_i-x_j}{|x_i-x_j|}\Big\ra\Big) \sqrt{Q_R^N}\Big)\\
&\le -c\varepsilon R^3\sum_{i=1}^N |z_{i,1}|\close \sum_{1\le i<j\le N}\frac{1}{|x_i-x_j|^{\beta_1}} - c\varepsilon R \close \sum_{1\le i<j\le N}\frac{1}{|x_i-x_j|^{\beta_1}} \nt \\
&\qquad +C\varepsilon \Big( R^6 \sum_{i=1}^N |\zb_i|^2+|\vb|^2+|\xb|^2+\close \sum_{1\le i<j\le N}\frac{1}{|x_i-x_j|^{\beta_1}}+R^2\Big)\\
&\qquad + C\varepsilon \Big(|\vb|^2+ R^{12} \sum_{i=1}^N |\zb_i|^2+ R^{12}\big),
\end{align*}
whence
\begin{align} \label{ineq:L_0^N.<v,(x_i-x_j)>sqrt(Q_R^N)}
&\L_{m,0}^N \Big(-\varepsilon\sum_{i=1}^N m\Big\la v_i,\sum_{j\neq i}\frac{ x_i-x_j}{|x_i-x_j|}\Big\ra\Big) \sqrt{Q_R^N}\Big) \nt\\
&\qquad\le  -c\varepsilon R^3\sum_{i=1}^N |z_{i,1}|\close \sum_{1\le i<j\le N}\frac{1}{|x_i-x_j|^{\beta_1}} - c\varepsilon R \close \sum_{1\le i<j\le N}\frac{1}{|x_i-x_j|^{\beta_1}} \nt \\
&\qquad\qquad +C\varepsilon \Big( R^{12} \sum_{i=1}^N |\zb_i|^2+|\vb|^2+|\xb|^2+\close \sum_{1\le i<j\le N}\frac{1}{|x_i-x_j|^{\beta_1}}+R^{12}\Big).
\end{align}
In the above, we emphasize that $c,C$ are independent of $\varepsilon$ and $R$.

Turning back to $V_N^2$ given by \eqref{form:V_N^2}, from the estimates \eqref{eqn:L_0^N.H_N}, \eqref{ineq:L_0^N.<x,v>}, \eqref{ineq:L_0^N.<v,z>} and \eqref{ineq:L_0^N.<v,(x_i-x_j)>sqrt(Q_R^N)}, we obtain
\begin{align*}
\L_{m,0}^N V_N^2 &\le -I_3+I_4,
\end{align*}
where
\begin{align*}
I_3&= -\sum_{i=1}^N \sum_{\ell=1}^{k_i}\alpha_{i,\ell}|z_{i,\ell}|^2 -a_2\varepsilon R|\xb|^{2} -\varepsilon R^2 m\sum_{i=1}^N \lambda_{i,1}|v_i|^2\\
&\qquad -c\varepsilon R^3\sum_{i=1}^N |z_{i,1}|\close \sum_{1\le i<j\le N}\frac{1}{|x_i-x_j|^{\beta_1}} - c\varepsilon R \close \sum_{1\le i<j\le N}\frac{1}{|x_i-x_j|^{\beta_1}} ,
\end{align*}
and
\begin{align*}
I_4&=\frac{1}{2}\sum_{i=1}^N\sum_{\ell=1}^{k_i}\alpha_{i,\ell} + C R\Big( \varepsilon|\vb|^2+\varepsilon^{1/2}\sum_{i=1}^N|\zb_i|^2+\varepsilon^{3/2}|\xb|^2+1\Big)\\
&\qquad+C\varepsilon R^2\Big(\sum_{i=1}^N|z_{i,1}|\sum_{1\le i<j\le N}\frac{1}{|x_i-x_j|^{\beta_1}}+\sum_{i=1}^N|\zb_{i}|^2+1\Big)\\
&\qquad +C\varepsilon R\close\sum_{1\le i<j\le N}\frac{1}{|x_i-x_j|^{\beta_1-1}}+C R^2 \Big( \varepsilon^{3/2}( |\xb|^2+|\vb|^2)+\varepsilon^{1/2}\sum_{i=1}^N |\zb_i|^2+\varepsilon\Big) \nt \\
&\qquad  +C\varepsilon \Big( R^{12} \sum_{i=1}^N |\zb_i|^2+|\vb|^2+|\xb|^2+\close \sum_{1\le i<j\le N}\frac{1}{|x_i-x_j|^{\beta_1}}+R^{12}\Big).
\end{align*}
Since the constants $c,C$ are independent of $\varepsilon, R$, we may infer
\begin{align*}
\L_{m,0}^N V_N^2 &\le -c\sum_{i=1}^N |\zb_{i}|^2 -c\varepsilon R|\xb|^{2} -c\varepsilon R^2 |\vb|^2\\
&\qquad -c\varepsilon R^3\sum_{i=1}^N |z_{i,1}|\close \sum_{1\le i<j\le N}\frac{1}{|x_i-x_j|^{\beta_1}} - c\varepsilon R \close \sum_{1\le i<j\le N}\frac{1}{|x_i-x_j|^{\beta_1}} \\
&\qquad +C\varepsilon R(1+R\varepsilon^{1/2})|\vb|^2+ C\varepsilon(1+R^2\varepsilon^{1/2})|\xb|^2 +C\varepsilon^{1/2}R^{12}\sum_{i=1}^N |\zb_i|^2\\
 &\qquad  +C\varepsilon R^2\sum_{i=1}^N|z_{i,1}|\sum_{1\le i<j\le N}\frac{1}{|x_i-x_j|^{\beta_1}}+C\varepsilon\close\sum_{1\le i<j\le N}\frac{1}{|x_i-x_j|^{\beta_1}}+CR^{12}.
\end{align*}
Now, by first taking $R$ sufficiently large and then shrinking $\varepsilon$ small enough, we observe that all the positive (non-constant) terms on the above right-hand side are dominated by the negative terms. That is, the following holds
\begin{align*}
\L_{m,0}^N V_N^2 &\le -c\sum_{i=1}^N |\zb_{i}|^2 -c\varepsilon R|\xb|^{2} -c\varepsilon R^2 |\vb|^2 - c\varepsilon R \close \sum_{1\le i<j\le N}\frac{1}{|x_i-x_j|^{\beta_1}} +CR^{12}.
\end{align*}
This produces the desired Lyapunov property of $V_N^2$ for \eqref{eqn:GLE:N-particle} in the case $\gamma=0$. The proof is thus finished.

\end{proof}

\subsection{Proof of Theorem \ref{thm:ergodicity:N-particle}} \label{sec:ergodicity:proof-of-theorem}

The proof of Theorem \ref{thm:ergodicity:N-particle} is based on the Lyapunov functions constructed in Section~\ref{sec:ergodicity:N-particle} and a local minorization, cf. Definition \ref{def:minorization}, on the transition probabilities $P^m_t(X,\cdot)$. The latter property is summarized in the following auxiliary result.

\begin{lemma} \label{lem:minorization}
 Under Assumption \ref{cond:U} and Assumption \ref{cond:G}, system \eqref{eqn:GLE:N-particle} satisfies the minorization condition as in Definition \ref{def:minorization}.
\end{lemma}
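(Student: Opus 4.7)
The plan is to follow the standard hypoelliptic-plus-controllability template used in \cite{mattingly2002ergodicity,pavliotis2014stochastic,herzog2017ergodicity}, adapted to the singular state space $\X$. Concretely, I will show that the Markov semigroup $P_t^m$ admits a smooth, strictly positive transition density $p_t(X,Y)$ on $\X\times\X$, and then exploit compactness of the sublevel sets $\X_R=\{V\le R\}$ to extract a uniform pointwise lower bound that yields \eqref{ineq:minorization} with $\nu_R$ chosen as normalized Lebesgue measure on a fixed open ball in $\X$.

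First I would verify parabolic H\"ormander's bracket condition for the generator $\L_{m,\gamma}^N$. The diffusion vector fields are the constant fields $\partial_{v_i}$ (when $\gamma>0$) together with $\partial_{z_{i,\ell}}$ for every $i,\ell$. Bracketing these against the drift, the coupling $v_i\cdot\partial_{x_i}$ produces $\partial_{x_i}$, while the coupling $-\lambda_{i,\ell}v_i\cdot\partial_{z_{i,\ell}}$ combined with the $-\alpha_{i,\ell}z_{i,\ell}$ term produces, in the $\gamma=0$ case, the missing $\partial_{v_i}$ direction via one further bracket with the drift. Since the nonlinearities $\grad U$ and $\grad G$ are $C^\infty$ on $\X$ (the singular set is excluded), H\"ormander's theorem applies locally and yields a smooth transition density $p_t(X,Y)$ on $(0,\infty)\times\X\times\X$.

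Second, I would establish a support/controllability statement: for any $X_0,X_1\in\X$ and any $t>0$, there exist smooth control paths $u_{i,j}\in C^\infty([0,t];\rbb^d)$ such that the associated skeleton ODE, obtained by replacing $\d W_{i,j}$ by $\dot u_{i,j}\d s$ in \eqref{eqn:GLE:N-particle}, steers $X_0$ to $X_1$ along a trajectory that remains in $\X$. This is done constructively: pick a smooth path $s\mapsto\xb(s)$ in $\D$ connecting $x_i(0)$ to $x_i(t)$ without particle collisions (possible since $\D$ is open and path-connected in $d\ge 1$, $N\ge 2$, by routing particles one at a time along disjoint arcs); set $v_i(s)=\dot x_i(s)$, which determines $v_i(0),v_i(t)$; now prescribe smooth $z_{i,\ell}(s)$ interpolating the required endpoints in $\zb_{i,\ell}$; finally solve the $v_i$- and $z_{i,\ell}$-equations algebraically for the controls. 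The Stroock--Varadhan support theorem then guarantees $\mathrm{supp}\big(\Law(X_m(t;X_0))\big)=\X$, and combined with smoothness, $p_t(X_0,Y)>0$ for all $(X_0,Y)\in\X\times\X$.

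Finally, I would turn positivity into the uniform minorization. By Definition \ref{def:Lyapunov}, $V(X)\to\infty$ both as $|X|\to\infty$ and as $\min_{i\ne j}|x_i-x_j|\to 0$, so $\X_R$ is a compact subset of $\X$ bounded away from the singular set. Fix any $X_*\in\X$, a small open ball $U\subset\X$ around $X_*$, and a time $t_R>0$; then $(X,Y)\mapsto p_{t_R}(X,Y)$ is continuous and strictly positive on the compact set $\X_R\times\overline U$, so it admits a positive infimum $c_R>0$. Setting $\nu_R(A)=\mathrm{Leb}(A\cap U)/\mathrm{Leb}(U)$ gives $\nu_R(\X_R)=1$ (after enlarging $R$ so that $U\subset\X_R$) and \eqref{ineq:minorization}. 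The main obstacle is the controllability step in Paragraph~3: one must build the connecting position path carefully so as not to cross the collision set $\{x_i=x_j\}$, which is where the singular potentials would blow up; this is the only place where the geometry of $\D$ as an open subset of $(\rbb^d)^N$ genuinely enters, and it is straightforward since $\D$ is open, connected, and locally pathwise-connected.
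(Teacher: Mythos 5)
Your proposal is correct and follows essentially the same route as the paper's (itself only sketched) argument: H\"ormander hypoellipticity for a smooth density, controllability of the skeleton ODE within $\D$ plus the Stroock--Varadhan support theorem for positivity, and compactness of the sublevel sets $\X_R$ to extract the uniform lower bound. The only caveat is in the controllability step when $\gamma=0$: there is then no control in the $v_i$-equation, so one cannot ``solve the $v_i$-equation for the controls''; instead the $v_i$-equation becomes a constraint determining $\sum_\ell\lambda_{i,\ell}z_{i,\ell}(s)$ from the chosen position path (so one must prescribe $2$-jets of $\xb(s)$ at the endpoints to match the given $\zb$-data), a detail handled in the references the paper cites.
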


The proof of Lemma \ref{lem:minorization} is relatively standard and can be found in literature for the Langevin dynamics \cite{herzog2017ergodicity,ottobre2011asymptotic}. For the sake of completeness, we briefly sketch the argument without going into details. 

\begin{proof}[Sketch of the proof of Lemma \ref{lem:minorization}]
First of all, by verifying a Hormander's condition, see \cite[page 1639]{ottobre2011asymptotic}, we note that the operator $\partial_t+\L_{m,\gamma}^N$ ($\gamma\ge 0$) is hypoelliptic \cite{hormander1967hypoelliptic}. This implies that the transition probabilities $P_t^m$ has a smooth density in $\X$. Furthermore, we may adapt the proof of \cite[Proposition 2.5]{herzog2017ergodicity} to study the control problem for \eqref{eqn:GLE:N-particle}. In particular, by the Stroock-Varadhan support Theorem, it can be shown that $\P_t^m(X,A)>0$ for $t>0,X\in\X$ and every open set $A\subset \X$. We then employ the same argument as those in \cite[Corollary 5.12]{herzog2017ergodicity} and \cite[Lemma 2.3]{mattingly2002ergodicity} to conclude the inequality \eqref{ineq:minorization}, thereby concluding the minorization.
\end{proof}

We are now in a position to conclude Theorem \ref{thm:ergodicity:N-particle}. The proof employs the
weak-Harris theorem proved in \cite[Theorem 1.2]{hairer2011yet}. 

\begin{proof}[Proof of Theorem \ref{thm:ergodicity:N-particle}] First of all, it is clear that $\pi_N$ defined in \eqref{form:pi_N} is an invariant measure for \eqref{eqn:GLE:N-particle}. Next, we observe that from Lemma \ref{lem:Lyapunov:V_N^1} and Lemma \ref{lem:Lyapunov:V_N^2}, the functions $V^1_N$ and $V^2_N$ respectively defined in \eqref{form:V_N^1} and \eqref{form:V_N^2} are Lyapunov functions for \eqref{eqn:GLE:N-particle} in the case $\gamma>0$ and $\gamma=0$. It follows that \cite[Assumption 1]{hairer2011yet} holds. On the other hand, Lemma \ref{lem:minorization} verifies \cite[Assumption 2]{hairer2011yet}. In view of \cite[Theorem 1.2]{hairer2011yet}, we conclude the uniqueness of $\pi_N$ as well as the exponential convergent rate \eqref{ineq:geometric-ergodicity}.

\end{proof}

\section{Small mass limit} \label{sec:small-mass}

We turn to the topic of the small mass limit for \eqref{eqn:GLE:N-particle}. In Section \ref{sec:small-mass:heuristic}, we provide a heuristic argument on how we derive the limiting system \eqref{eqn:GLE:N-particle:m=0} as $m\to 0$. We also outline the main steps of the proof of Theorem \ref{thm:small-mass:N-particle} in this section. In Section \ref{sec:small-mass:truncating}, we prove a partial result on the small mass limit assuming the nonlinearities are globally Lipschitz. In Section \ref{sec:small-mass:estimate-m=0}, we establish useful moment estimates on the limiting system \eqref{eqn:GLE:N-particle:m=0}. Lastly, in Section \ref{sec:small-mass:proof-of-theorem}, we establish Theorem \ref{thm:small-mass:N-particle} while making use of the auxiliary results from Section \ref{sec:small-mass:truncating} and Section \ref{sec:small-mass:estimate-m=0}.

\subsection{Heuristic argument for the limiting system \eqref{eqn:GLE:N-particle:m=0}} \label{sec:small-mass:heuristic}
In this subsection, we provide a heuristic argument detailing how we derive the limiting system~\eqref{eqn:GLE:N-particle:m=0} as well as its initial conditions from the original system \eqref{eqn:GLE:N-particle}. The argument draws upon recent works in~\cite{herzog2016small,nguyen2018small} where similar issues were dealt with in the absence of singular potentials. We formally set $m=0$ on the left-hand side of the $v_i-$equation in \eqref{eqn:GLE:N-particle} while substituting the $v_i(t)$ term on the right-hand side by $\d x_i(t)$ to obtain
\begin{align*}
\gamma \d x_i(t)=\Big[-\grad U(x_i(t))-\sum_{j\neq i}\grad G(x_i(t)-x_j(t))+\sum_{\ell=1}^{k_i}\lambda_{i,\ell} z_{i,\ell}(t)\Big]\d t+\sqrt{2\gamma}dW_0(t).
\end{align*}
Next, considering the $z_{i,\ell}-$equation in~\eqref{eqn:GLE:N-particle}, by Duhamel's formula, $z_{i,\ell}(t)$ may be written as
\begin{align} \label{form:Duhamel:z_i,ell:v_i}
z_{i,\ell}(t)=e^{-\alpha_{i,\ell} t}z_{i,\ell}(0)-\lambda_{i,\ell}\int_0^t e^{-\alpha_{i,\ell}(t-r)}v_i(r)\d r+\sqrt{2\alpha_{i,\ell}}\int_0^t e^{-\alpha_{i,\ell}(t-r)}\d W_{i,\ell}(r).
\end{align}
We note that the above expression still depends on $v_i(t)$. Nevertheless, this can be circumvented by employing an integration by parts as follows: 
\begin{align*}
\int_0^t e^{-\alpha_{i,\ell}(t-r)}v_i(r)\d r = x_i(t)-e^{-\alpha_{i,\ell} t}x_i(0)+\alpha_{i,\ell}\int_0^t e^{-\alpha_{i,\ell}(t-r)}x_i(r)\d r.
\end{align*}
Alternatively, we note that the above identity can be derived by applying It\^o's formula to $e^{\alpha_{i,\ell} t}x_i(t)$. Plugging back into \eqref{form:Duhamel:z_i,ell:v_i}, we find
\begin{align} \label{eqn:Duhamel:z_i,ell:x_i}
z_{i,\ell}(t)&=e^{-\alpha_{i,\ell} t}(z_{i,\ell}(0)+\lambda_{i,\ell} x_i(0))-\lambda_{i,\ell} x_i(t)-\lambda_{i,\ell}\alpha_{i,\ell}\int_0^t e^{-\alpha_{i,\ell}(t-r)}x_i(r)\d r \nt\\
&\qquad\qquad\qquad+\sqrt{2\alpha_{i,\ell}}\int_0^t e^{-\alpha_{i,\ell}(t-r)}\d W_{i,\ell}(r),
\end{align}
whence
\begin{align*}
z_{i,\ell}(t)+\lambda_{i,\ell} x_i(t)&=e^{-\alpha_{i,\ell} t}(z_{i,\ell}(0)+\lambda_{i,\ell} x_i(0))-\lambda_{i,\ell}\alpha_{i,\ell}\int_0^t e^{-\alpha_{i,\ell}(t-r)}x_i(r)\d r\\
&\qquad\qquad\qquad+\sqrt{2\alpha_{i,\ell}}\int_0^t e^{-\alpha_{i,\ell}(t-r)}\d W_{i,\ell}(r).
\end{align*}
Setting $f_{i,\ell}(t):=z_{i,\ell}(t)+\lambda_{i,\ell} x_i(t)$, we observe that (using Duhamel's formula again)
\begin{align*}
\d f_{i,\ell}(t) &= -\alpha_{i,\ell} f_{i,\ell}(t)+\lambda_{i,\ell}\, \alpha_{i,\ell}\, x_i(t)\d t+\sqrt{2\alpha_{i,\ell}}\d W_{i,\ell}(t),\quad \ell=1,\dots, k_i,\\
 f_{i,\ell}(0)&=z_{i,\ell}(0)+\lambda_{i,\ell} x_i(0).
\end{align*}
This together with setting $q_i(t):=x_i(t)$ deduces the limiting system \eqref{eqn:GLE:N-particle:m=0} as well as the corresponding shifted initial conditions as in Theorem \ref{thm:small-mass:N-particle}.

Next, for the reader's convenience, we summarize the idea of the proof of Theorem \ref{thm:small-mass:N-particle}. The argument
essentially consists of three steps as follows \cite{herzog2016small,lim2019homogenization,
lim2020homogenization}.

\textbf{Step 1}: we first truncate  the nonlinear potentials in \eqref{eqn:GLE:N-particle} and \eqref{eqn:GLE:N-particle:m=0} obtaining a truncated system whose coefficients are globally Lipschitz, and establish a convergence in probability in the small mass limit. This result appears in Proposition \ref{prop:small-mass:truncating} found in Section \ref{sec:small-mass:truncating}.

\textbf{Step 2}: Next, we establish an exponential moment bound on any finite time window for the limiting system \eqref{eqn:GLE:N-particle:m=0}. This is discussed in details in Section \ref{sec:small-mass:estimate-m=0}, cf. Lemma \ref{lem:GLE:N-particle:m=0:supnorm}.

\textbf{Step 3}: We prove Theorem \ref{thm:small-mass:N-particle} by removing the Lipschitz constraint from Proposition \ref{prop:small-mass:truncating} while making use of the energy estimates in Lemma \ref{lem:GLE:N-particle:m=0:supnorm}.

\subsection{Truncating \eqref{eqn:GLE:N-particle} and \eqref{eqn:GLE:N-particle:m=0}} \label{sec:small-mass:truncating} For $R>2$, let $\theta_R:[0,\infty)\to\rbb$ be a smooth function satisfying
\begin{align} \label{form:theta_R}
\theta_R(t) = \begin{cases} 
1,&  0\le t\le R,\\
\text{decreasing},& R\le t\le R+1,\\
0,& t\ge R+1.
\end{cases}
\end{align}
With the above cut-off $\theta_R$, we consider a truncating version of \eqref{eqn:GLE:N-particle} given by
\begin{align} \label{eqn:GLE:N-particle:truncating}
\d\, x_i(t) &= v_i(t)\d t,\qquad i=1,\dots,N, \nt  \\
m\d\, v_i(t) & = -\gamma v_i(t) \d t -\theta_R(|x_i(t)|)\grad \U(x_i(t))\d t +\sqrt{2\gamma} \,\d W_{i,0}(t)\\
&\qquad  - \sum_{j\neq i}\theta_R\big(|x_i(t)-x_j(t)|^{-1}\big)\grad \G\big(x_i(t)-x_j(t)\big) \d t +\sum _{\ell=1}^{k_i} \lambda_{i,\ell}  z_{i,\ell}(t)\d t, \nt \\ 
\d\, z_{i,\ell}(t) &= -\alpha_{i,\ell} z_{i,\ell} (t)\d t-\lambda_{i,\ell}  v_i(t)\d t+\sqrt{2\alpha_{i,\ell} }\,\d W_{i,\ell} (t),\quad \ell=1,\dots,k_i,\nt 
\end{align}
as well as the following truncated version of \eqref{eqn:GLE:N-particle:m=0}
\begin{align} \label{eqn:GLE:N-particle:m=0:truncating}
\gamma \d q_i(t) & =  -\theta_R(|q_i(t)|)\grad U(q_i(t))\d t- \sum_{j\neq i}\theta_R\big(|q_i(t)-q_j(t)|^{-1}\big)\grad \G\big(q_i(t)-q_j(t)\big) \d t \nt \\
&\qquad\qquad -\sum_{i=1}^{k_i} \lambda_{i,\ell}^2 q_i(t)\d t+\sum_{\ell=1}^{k_i}\lambda_{i,\ell} f_{i,\ell}(t)\d t+\sqrt{2\gamma}\d W_{i,0}(t), \nt \\
\d f_{i,\ell}(t) &= -\alpha_{i,\ell} f_{i,\ell}(t)+\lambda_{i,\ell}\, \alpha_{i,\ell}\, q_i(t)\d t+\sqrt{2\alpha_{i,\ell}}\d W_{i,\ell}(t),\quad \ell=1,\dots, k_i,\\
q_i(0)& = x_i(0),\quad f_{i,\ell}(0)=z_{i,\ell}(0)+\lambda_{i,\ell} x_i(0).\nt 
\end{align}

We now show that system \eqref{eqn:GLE:N-particle:truncating} can be approximated by \eqref{eqn:GLE:N-particle:m=0:truncating} on any finite time window in the small mass regime.

\begin{proposition} \label{prop:small-mass:truncating}
Under Assumption \ref{cond:U} and Assumption \ref{cond:G}, given any initial condition $(\xb(0)$, $\vb(0)$, $\zb_{1}(0)$,$\dots$, $\zb_N(0))\in \X$ and $R>2$, let $\big(\xb_m^R(t)$, $\vb_m^R(t)$, $\zb_{1,m}^R(t)$,$\dots$, $\zb_{N,m}^R(t)\big)$ and $\big(\qb^R(t)$, $\fb_{1}^R(t)$,$\dots$, $\fb_{N}^R(t)\big)$ respectively solve~\eqref{eqn:GLE:N-particle:truncating} and~\eqref{eqn:GLE:N-particle:m=0:truncating}. Then, for every $T>0$, the following holds 
\begin{align} \label{lim:small-mass:truncating}
\E\Big[\sup_{0\leq t\leq T}\big|\xb_m^R(t)-\qb^R(t)\big|^4\Big]\le m\cdot C,\quad\text{as}\quad m\rightarrow 0,
\end{align}
for some positive constant $C=C(T,R)$ independent of $m$.
\end{proposition}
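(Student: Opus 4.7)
The plan is to follow the strategy of \cite{herzog2016small,nguyen2018small}, adapted to the truncated system. The key simplifying feature here is that, thanks to the cutoffs $\theta_R$, the drift terms $\theta_R(|x_i|)\grad U(x_i)$ and $\theta_R(|x_i-x_j|^{-1})\grad G(x_i-x_j)$ are smooth with compact support inside the smooth regions of $U$ and $G$, hence globally bounded and globally Lipschitz with constants depending only on $R$. I will also assume $\gamma>0$, without which the limiting equation \eqref{eqn:GLE:N-particle:m=0:truncating} has no meaningful interpretation.

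First, I would implement the heuristic of Section \ref{sec:small-mass:heuristic} directly on the truncated system. Define $\tilde f_{i,\ell}^m(t) := z_{i,\ell}^R(t) + \lambda_{i,\ell} x_i^R(t)$. Applying Duhamel to $z_{i,\ell}^R$ and integrating by parts to trade the $v_i^R$-integral for an $x_i^R$-integral, one checks that $\tilde f_{i,\ell}^m$ solves exactly the $f_{i,\ell}$-SDE of \eqref{eqn:GLE:N-particle:m=0:truncating} with $q_i^R$ replaced by $x_i^R$, and by the initial condition \eqref{cond:initial-condition:(q,f)} we have $\tilde f_{i,\ell}^m(0) = f_{i,\ell}^R(0)$. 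Next, integrating the $v_i^R$-equation on $[0,t]$, using $-\gamma\int_0^t v_i^R\,ds = -\gamma(x_i^R(t)-x_i^R(0))$ together with the substitution $\sum_\ell\lambda_{i,\ell} z_{i,\ell}^R = \sum_\ell\lambda_{i,\ell}\tilde f_{i,\ell}^m - \sum_\ell \lambda_{i,\ell}^2 x_i^R$, yields an integral identity for $\gamma x_i^R(t)$ that is structurally identical to the one for $\gamma q_i^R(t)$ obtained by integrating \eqref{eqn:GLE:N-particle:m=0:truncating}, up to the single error term $-m(v_i^R(t)-v_i^R(0))$.

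Subtracting the two identities, the common noise $\sqrt{2\gamma}W_{i,0}(t)$ cancels, leaving
\[
\gamma(x_i^R(t)-q_i^R(t)) = -m\bigl(v_i^R(t)-v_i^R(0)\bigr) + \int_0^t \Delta_i(s)\,ds,
\]
where $\Delta_i$ collects Lipschitz-controlled differences of the truncated drifts, the linear term $-\sum_\ell \lambda_{i,\ell}^2(x_i^R-q_i^R)$, and $\sum_\ell\lambda_{i,\ell}(\tilde f_{i,\ell}^m - f_{i,\ell}^R)$. The latter difference satisfies the noise-free linear equation $d(\tilde f-f) = -\alpha_{i,\ell}(\tilde f-f)\,dt + \lambda_{i,\ell}\alpha_{i,\ell}(x^R-q^R)\,dt$ with vanishing initial data, so a preliminary Gronwall gives $|\tilde f_{i,\ell}^m-f_{i,\ell}^R|(t) \le C(T,R)\int_0^t |x^R-q^R|(s)\,ds$. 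Raising the difference identity to the fourth power, taking supremum over $[0,t]$, then expectation, and invoking Gronwall once more yields
\[
\E \sup_{s\in[0,T]}\! |x_m^R(s)-q^R(s)|^4 \le C(T,R,\gamma)\, m^4\Bigl( |v_m^R(0)|^4 + \E \sup_{s\in[0,T]}\! |v_m^R(s)|^4\Bigr).
\]

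The main obstacle is then the a priori bound $\E \sup_{[0,T]}|v_m^R|^4 \le C(T,R)\,m^{-\alpha}$ for some $\alpha<4$; once available, the preceding display immediately gives the claimed $O(m)$ rate. To establish it I would work with the auxiliary energy $\Psi := m|v^R|^2 + \sum_{i,\ell}|z_{i,\ell}^R|^2$, where the cross terms $\sum_\ell\lambda_{i,\ell}\langle v_i^R,z_{i,\ell}^R\rangle$ cancel between $d(m|v^R|^2)$ and $d\sum_{i,\ell}|z_{i,\ell}^R|^2$, the boundedness of the truncated drift controls $\langle v^R, F\rangle$ via Young's inequality, and the dissipation $-\gamma|v^R|^2 - \sum \alpha_{i,\ell}|z_{i,\ell}^R|^2$ balances the Ito correction $\tfrac{2\gamma dN}{m}$ after rescaling. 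Iterating on $\Psi^2$ together with BDG yields $\E \sup_{[0,T]}(m|v_m^R|^2)^2 \le C(T,R)$, hence $\E\sup|v_m^R|^4 \le C(T,R)\,m^{-2}$, which, plugged back in, even produces the stronger rate $\E\sup|x_m^R-q^R|^4 \le C(T,R)\,m^2$.
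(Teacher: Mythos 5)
The first half of your argument — rewriting $z_{i,\ell}^R+\lambda_{i,\ell}x_i^R$ as a solution of the $f$-equation driven by $x_i^R$, cancelling the common noise, controlling $\tilde f^m-f^R$ by a preliminary Gronwall, and reducing everything to
$\E\sup_{[0,T]}|\xb_m^R-\qb^R|^4\le C m^4\,\E\sup_{[0,T]}|\vb_m^R(t)-\vb(0)|^4$
— is correct and is essentially the paper's argument (the paper substitutes the Duhamel formula for $z_{i,\ell}^R$ directly rather than introducing $\tilde f^m$, but the resulting error identity \eqref{eqn:m.d.v_i^R+d.xbar_i^R} is the same). Your standing assumption $\gamma>0$ also matches what the paper implicitly requires.

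The gap is in the velocity moment bound, where you depart from the paper. Your claim $\E\sup_{[0,T]}(m|\vb_m^R|^2)^2\le C(T,R)$ is false: already for the linear part $\d v=-\tfrac{\gamma}{m}v\,\d t+\tfrac{\sqrt{2\gamma}}{m}\d W$ the kinetic energy $m|v(t)|^2$ equilibrates on the time scale $m/\gamma$, so its supremum over a fixed window $[0,T]$ grows like $\log(1/m)$ and the left-hand side diverges. More importantly, the sketched derivation does not produce even a sufficient weaker bound. In $\d\Psi$ the It\^o correction contributes $\tfrac{2\gamma dN}{m}\,\d t$, which integrates \emph{pathwise} to $CT/m$; the dissipation $-2\gamma\int_0^t|v|^2\,\d s$ cancels this only in expectation, not pathwise, so after dropping it one is left with $\sup_t\Psi(t)\le\Psi(0)+CT/m+\sup_t M(t)$, and BDG gives $\E\langle M\rangle(T)\le C\int_0^T\E(\gamma|v|^2+\cdots)\,\d s=O(T/m)$ as well. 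Carried through honestly this yields $\E\sup\Psi^2=O(m^{-2})$, hence only $\E\sup|\vb_m^R|^4=O(m^{-4})$, and then $m^4\,\E\sup|\vb_m^R|^4=O(1)$, which does not vanish — the proof does not close. What is actually needed is $\E\sup_{[0,T]}|\vb_m^R|^4\le Cm^{-3}$, and the way the paper gets it (Lemma \ref{lem:small-mass:truncating:m.v_m->0}) is not an energy estimate but the mild formulation of the $v$-equation: writing $mv_i^R(t)$ via the variation-of-constants formula with kernel $e^{-\frac{\gamma}{m}(t-r)}$, every bounded (truncated) drift contributes only $\int_0^te^{-\frac{\gamma}{m}(t-r)}\d r\le m/\gamma$, and the stochastic convolution satisfies $\E\sup_{[0,T]}|\int_0^te^{-\frac{\gamma}{m}(t-r)}\d W(r)|^n\le Cm^{n/2-1}\le Cm$ for $n\ge4$; a Gronwall argument then gives $m^n\E\sup|\vb_m^R|^n\le Cm$. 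You should replace your Step on $\Psi$ by this convolution estimate (or supply a genuinely sharper pathwise control of the sup of the fast OU component); as written, the final bound and the advertised ``stronger rate $m^2$'' are unsupported.
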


In order to establish Proposition \ref{prop:small-mass:truncating}, it is crucial to derive useful moment bounds on the velocity process $\vb_m^R(t)$.  More precisely, we have the following result.

\begin{lemma}\label{lem:small-mass:truncating:m.v_m->0}
Under Assumption \ref{cond:U} and Assumption \ref{cond:G}, given any initial condition $(\xb(0)$, $\vb(0)$, $\zb_{1}(0)$,$\dots$, $\zb_N(0))\in \X$  and $R>2$, let $\big(\xb_m^R(t)$, $\vb_m^R(t)$, $\zb_{1,m}^R(t)$,$\dots$, $\zb_{N,m}^R(t)\big)$ be the solution of~\eqref{eqn:GLE:N-particle:truncating}. Then, for every $T>0$, $n>1$ and $\varepsilon>0$, it holds that 
\begin{align} \label{lim:small-mass:truncating:m.v_m->0}
m^{n}\E\Big[\sup_{0\leq t\leq T}\big|\vb_m^R(t)\big|^n\Big]\le m^{\frac{n}{2}-\varepsilon} C,\quad\text{as}\quad m\rightarrow 0,
\end{align}
for some positive constant $C=C(T,n,R,\varepsilon)$ independent of $m$.
\end{lemma}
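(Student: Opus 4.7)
The plan is to apply Duhamel's formula to the linear $v_i$-equation in~\eqref{eqn:GLE:N-particle:truncating} and show that each of the three resulting terms, after being raised to the $n$-th power and multiplied by $m^n$, contributes $O(m)$ in expectation. The cutoff $\theta_R$ renders the drifts $\theta_R(|x_i|)\grad U(x_i)$ and $\theta_R(|x_i-x_j|^{-1})\grad G(x_i-x_j)$ uniformly bounded by a constant $C_R$ depending only on $R$, $a_1$, $\beta_1$, and $\lambda$. Writing $F_i(s)$ for the non-$v$ drift terms so that $|F_i(s)|\le C_R(1+|\zb_{i,m}^R(s)|)$, Duhamel yields componentwise
\begin{equation*}
m\, v_i(t) = m\, e^{-\gamma t/m}\, v_i(0) + \int_0^t e^{-\gamma(t-s)/m} F_i(s)\,\d s + \sqrt{2\gamma}\int_0^t e^{-\gamma(t-s)/m}\,\d W_{i,0}(s).
\end{equation*}

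The first (initial-condition) term is deterministic of size $O(m)$, so $m^n|v_i(0)|^n\le m\cdot C$ trivially for $n\ge 1$. The third (noise) term $M_i(t):=\sqrt{2\gamma}\int_0^t e^{-\gamma(t-s)/m}\,\d W_{i,0}(s)$ is an Ornstein--Uhlenbeck process satisfying $\d M_i = -(\gamma/m)M_i\,\d t + \sqrt{2\gamma}\,\d W_{i,0}$, with fast mean-reversion rate $\gamma/m$ and stationary variance of order $m$. Applying It\^o's formula to $|M_i|^n$, using Young's inequality to absorb lower-order terms into the fast-decay contribution $-(n\gamma/m)|M_i|^n$, and invoking the Burkholder--Davis--Gundy inequality on the martingale part (or, alternatively, a direct Gaussian-supremum estimate), one obtains
\begin{equation*}
\E\sup_{t\le T}|M_i(t)|^n \le C_n\, m^{n/2}\bigl(1+\log(T\gamma/m)\bigr)^{n/2},
\end{equation*}
which is $\le m\cdot C$ for $n\ge 4$ and $m$ sufficiently small, since $m^{n/2-1}(\log(1/m))^{n/2}$ remains bounded on $(0,1]$.

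The crux is the middle term. From $\bigl|\int_0^t e^{-\gamma(t-s)/m}F_i(s)\,\d s\bigr|\le (m/\gamma)\sup_{s\le T}|F_i(s)|\le (C_Rm/\gamma)\bigl(1+\sup_{s\le T}|\zb_{i,m}^R(s)|\bigr)$, its contribution to $m^n\E\sup_{t\le T}|v_i|^n$ is at most $(C_Rm/\gamma)^n\E\bigl(1+\sup_{t\le T}|\zb_{i,m}^R(t)|\bigr)^n$, which is $\le m\cdot C$ for $n\ge 1$ and $m\le 1$ provided the $m$-uniform moment bound
\begin{equation*}
\sup_{m\in(0,1]}\E\Bigl(\sup_{t\le T}\sum_{i=1}^N|\zb_{i,m}^R(t)|\Bigr)^n <\infty
\end{equation*}
holds. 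To prove it, apply Duhamel to the linear $z_{i,\ell}$-equation and eliminate the $\int_0^t e^{-\alpha_{i,\ell}(t-s)}v_i(s)\,\d s$ term by integration by parts, exactly as in Section~\ref{sec:small-mass:heuristic}, to obtain
\begin{equation*}
z_{i,\ell}(t) = e^{-\alpha_{i,\ell} t}(z_{i,\ell}(0)+\lambda_{i,\ell}x_i(0)) - \lambda_{i,\ell}x_i(t) + \lambda_{i,\ell}\alpha_{i,\ell}\int_0^t e^{-\alpha_{i,\ell}(t-s)}x_i(s)\,\d s + \N_{i,\ell}(t),
\end{equation*}
where $\N_{i,\ell}$ is a Gaussian integral with moments bounded independently of $m$. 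Separately, integrating the $v_i$-Duhamel over $[0,t]$ and invoking Fubini expresses $x_i(t)-x_i(0)=\int_0^t v_i(s)\,\d s$ as $\gamma^{-1}\int_0^t F_i(r)\bigl(1-e^{-\gamma(t-r)/m}\bigr)\,\d r$ plus $O(m)$ plus a Gaussian with $m$-uniform moments. Substituting into the $z$-Duhamel produces a closed integral inequality
\begin{equation*}
\psi(t)\le A + B\int_0^t \psi(s)\,\d s, \qquad \psi(t):=\sup_{s\le t}\sum_{i=1}^N|\zb_{i,m}^R(s)|,
\end{equation*}
with $B$ independent of $m$ and $A$ having moments of all orders bounded uniformly in $m$; Gr\"onwall then gives $\psi(T)\le A\,e^{BT}$, yielding the required uniform bound.

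The principal obstacle is the circular coupling between $v_i$, $z_{i,\ell}$, and $x_i$: the velocity drives the memory through the $-\lambda_{i,\ell}v_i\,\d t$ term, while the memory feeds back into the drift of the velocity through $F_i$. The integration-by-parts identity replacing $v_i$ by the slow variable $x_i$ in the $z$-Duhamel is what breaks this circularity, and the Fubini manipulation on the integrated $v$-Duhamel controls $x_i$ in terms of $\int_0^t\psi(s)\,\d s$ with $m$-uniform constants. A secondary subtlety is that the stochastic contribution to $m\, v_i$ has typical size $\sqrt{m}$ (not $m$), so the hypothesis $n\ge 4$ is exactly what is needed to absorb the resulting fluctuation bound $m^{n/2}(\log(1/m))^{n/2}$ into the prescribed $m\cdot C$ target.
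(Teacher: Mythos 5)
Your argument is correct, but it closes the estimate by a genuinely different route than the paper. The paper also starts from Duhamel on the $v_i$-equation and disposes of the truncated drifts and the initial datum exactly as you do, and it controls the Ornstein--Uhlenbeck noise term via an $\E\sup_{t\le T}|\int_0^t e^{-\gamma(t-r)/m}\d W|^n\le m^{n/2-1}C(T,n)$ bound (this is where $n\ge 4$ enters, just as in your log-corrected Gaussian supremum estimate). The divergence is in the treatment of the memory term $\sum_\ell\lambda_{i,\ell}\int_0^t e^{-\gamma(t-s)/m}z^R_{i,\ell}(s)\,\d s$: the paper substitutes the $z$-Duhamel formula \emph{in terms of $v_i^R$} into the $v$-Duhamel, so that the only unbounded contribution is the iterated integral $I_4\le m\sum_\ell\lambda_{i,\ell}^2\int_0^t\sup_{s\le r}|v_i^R(s)|\,\d r$; raising to the $n$-th power and multiplying by $m^n$ then yields a closed inequality $m^n\E\sup_{t\le T}|\vb^R|^n\le mC+Cm^n\int_0^T\E\sup_{r\le t}|\vb^R|^n\,\d t$, and Gr\"onwall applied directly to $m^n\E\sup|\vb^R|^n$ finishes without ever needing an $m$-uniform bound on $\zb^R$. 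You instead bound $|F_i|\le C_R(1+|\zb^R_i|)$ and prove the auxiliary $m$-uniform moment bound $\sup_{m\in(0,1]}\E(\sup_{t\le T}\sum_i|\zb^R_{i,m}(t)|)^n<\infty$ by the integration-by-parts decoupling of Section \ref{sec:small-mass:heuristic} (replacing $v_i$ by $x_i$ in the $z$-Duhamel) together with a stochastic-Fubini representation of $x_i(t)-x_i(0)$ and a pathwise Gr\"onwall for $\psi(t)=\sup_{s\le t}\sum_i|\zb^R_{i,m}(s)|$. This is sound (the sign of the $\lambda_{i,\ell}\alpha_{i,\ell}\int e^{-\alpha_{i,\ell}(t-r)}x_i(r)\,\d r$ term is immaterial since you only use absolute values), costs more work than the paper's single Gr\"onwall on the velocity, but buys you $m$-uniform moment bounds on $\sup_{t\le T}|\xb^R_m|$ and $\sup_{t\le T}|\zb^R_m|$ as a by-product, which the paper's argument does not provide.
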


For the sake of clarity, the proof of Lemma \ref{lem:small-mass:truncating:m.v_m->0} will be deferred to the end of this subsection. In what follows, we will assume Lemma \ref{lem:small-mass:truncating:m.v_m->0} holds and prove Proposition \ref{prop:small-mass:truncating}. The argument is adapted from the proof of \cite[Proposition 9]{nguyen2018small} tailored to our settings.

\begin{proof}[Proof of Proposition \ref{prop:small-mass:truncating}]
From the $(x_i,v_i)-$equations in \eqref{eqn:GLE:N-particle:truncating}, we find
\begin{align*}
m\d v_i^R(t)+\gamma \d x_i^R(t) &=  -\theta_R(x_i^R(t))\grad \U(x_i^R(t))\d t +\sqrt{2\gamma} \,\d W_{i,0}(t)\\
&\qquad  - \sum_{j\neq i}\theta_R\big(|x_i^R(t)-x_j^R(t)|^{-1}\big)\grad \G\big(x_i^R(t)-x_j^R(t)\big) \d t +\sum _{\ell=1}^{k_i} \lambda_{i,\ell}  z_{i,\ell}^R(t)\d t.
\end{align*}
Substituting $z_{i,\ell}^R(t)$ by the expression \eqref{eqn:Duhamel:z_i,ell:x_i} into the above equation  produces
\begin{align}
&m\d v_i^R(t)+\gamma \d x_i^R(t)  \nt\\
 &=  -\theta_R(|x_i^R(t)|)\grad \U(x_i^R(t))\d t +\sqrt{2\gamma} \,\d W_{i,0}(t) - \sum_{j\neq i}\theta_R\big(|x_i^R(t)-x_j^R(t)|^{-1}\big)\grad \G\big(x_i^R(t)-x_j^R(t)\big) \d t \nt \\
&\qquad+\sum_{\ell=1}^{k_i} \lambda_{i,\ell}e^{-\alpha_{i,\ell} t}\big[z_{i,\ell}(0)+ \lambda_{i,\ell} x_i(0)\big]-\Big(\sum_{\ell=1}^{k_i}\lambda_{i,\ell}^2\Big) x_i^R(t) \nt \\
&\qquad-\sum_{\ell=1}^{k_i}\lambda_{i,\ell}^2\alpha_{i,\ell}\int_0^t e^{-\alpha_{i,\ell}(t-r)}x_i^R(r)\d r +\sum_{\ell=1}^{k_i}\lambda_{i,\ell}\sqrt{2\alpha_{i,\ell}}\int_0^t e^{-\alpha_{i,\ell}(t-r)}\d W_{i,\ell}(r)\d t. \label{eqn:m.d.v_i^R+d.x_i^r}
\end{align} 
Similarly, from the $f_{i,\ell}-$equation of system \eqref{eqn:GLE:N-particle:m=0:truncating}, we have
\begin{align*}
f_{i,\ell}^R(t)&=e^{-\alpha_{i,\ell} t}(z_{i,\ell}(0)+\lambda_{i,\ell} x_i(0))-\lambda_{i,\ell}\alpha_{i,\ell}\int_0^t e^{-\alpha_{i,\ell}(t-r)}q_i^R(r)\d r\\
&\qquad\qquad\qquad+\sqrt{2\alpha_{i,\ell}}\int_0^t e^{-\alpha_{i,\ell}(t-r)}\d W_{i,\ell}(r).
\end{align*}
Plugging into $q_i-$equation of \eqref{eqn:GLE:N-particle:m=0:truncating} yields
\begin{align}
&\gamma \d q_i^R(t) \nt\\
 &=  -\theta_R(|q_i^R(t)|)\grad \U(q_i^R(t))\d t +\sqrt{2\gamma} \,\d W_{i,0}(t  - \sum_{j\neq i}\theta_R\big(|q_i^R(t)-q_j^R(t)|^{-1}\big)\grad \G\big(q_i^R(t)-q_j^R(t)\big) \d t \nt \\
&\qquad+\sum_{\ell=1}^{k_i} \lambda_{i,\ell}e^{-\alpha_{i,\ell} t}\big[z_{i,\ell}(0)+ \lambda_{i,\ell} x_i(0)\big]-\Big(\sum_{\ell=1}^{k_i}\lambda_{i,\ell}^2\Big) q_i^R(t) \nt \\
&\qquad-\sum_{\ell=1}^{k_i}\lambda_{i,\ell}^2\alpha_{i,\ell}\int_0^t e^{-\alpha_{i,\ell}(t-r)}x_i^R(r)\d r +\sum_{\ell=1}^{k_i}\lambda_{i,\ell}\sqrt{2\alpha_{i,\ell}}\int_0^t e^{-\alpha_{i,\ell}(t-r)}\d W_{i,\ell}(r)\d t. \label{eqn:d.q_i^R}
\end{align} 
Setting $\xbar_i^R=x_i^R-q_i^R$, we subtract~\eqref{eqn:d.q_i^R} from \eqref{eqn:m.d.v_i^R+d.x_i^r} to obtain the identity
\begin{align}
m\d v_i^R(t)+\gamma \d \xbar_i^R(t)  
 &= -\Big[\theta_R(|x_i^R(t)|)\grad \U(x_i^R(t))-\theta_R(|q_i^R(t)|)\grad \U(q_i^R(t))\Big]\d t \nt \\
 &\qquad - \sum_{j\neq i}\Big[\theta_R\big(|x_i^R(t)-x_j^R(t)|^{-1}\big)\grad \G\big(x_i^R(t)-x_j^R(t)\big)\nt \\
 &\qquad\qquad\qquad-\theta_R\big(|q_i^R(t)-q_j^R(t)|^{-1}\big)\grad \G\big(q_i^R(t)-q_j^R(t)\big)\Big] \d t \nt \\
&\qquad -\Big(\sum_{\ell=1}^{k_i}\lambda_{i,\ell}^2\Big) \xbar_i^R(t)-\sum_{\ell=1}^{k_i}\lambda_{i,\ell}^2\alpha_{i,\ell}\int_0^t e^{-\alpha_{i,\ell}(t-r)}\xbar_i^R(r)\d r . \label{eqn:m.d.v_i^R+d.xbar_i^R}
\end{align}
By the choice of $\theta_R$ as in \eqref{form:theta_R} and conditions \eqref{cond:U:U''(x)=O(x^lambda-1)} and \eqref{cond:G:grad^2.G(x)<1/|x|^beta}, we invoke the Mean Value Theorem to infer
\begin{align*}
|\theta_R(|x|)\grad U(x)-\theta_R(|y|)\grad U(y)|\le C|x-y|,\quad x,y\in\rbb^d,
\end{align*}
and
\begin{align*}
|\theta_R\big(|x|^{-1}\big)\grad G(x)-\theta_R\big(|y|^{-1}\big)\grad G(y)|\le C|x-y|,\quad x,y\in\rbb^d\setminus\{0\},
\end{align*}
for some positive constant $C=C(R)$. From \eqref{eqn:m.d.v_i^R+d.xbar_i^R}, we arrive at the a.s. bound
\begin{align*}
|\xbar_i^R(t)|^n\le C m^n|v_i^R(t)-v(0)|^n+C\int_0^t \sum_{j=1}^N|\xbar_j^R(r)|^n\d r,
\end{align*}
whence 
\begin{align*}
|\bar{\xb}^R(t)|^n\le C m^n|\vb^R(t) -\vb(0)|^n+C\int_0^t \sum_{j=1}^N|\bar{\xb}^R(r)|^n\d r.
\end{align*}
Gronwall's inequality implies that
\begin{align*}
\E\sup_{t\in[0,T]}|\bar{\xb}^R(t)|^n \le C m^n\E\sup_{t\in[0,T]}|\vb^R(t)-\vb(0)|^n.
\end{align*}
Setting $n=4$, by virtue of Lemma~\ref{lem:small-mass:truncating:m.v_m->0}, this produces the small mass limit \eqref{lim:small-mass:truncating}, as claimed.

\end{proof}

We now turn to the proof of Lemma \ref{lem:small-mass:truncating:m.v_m->0}.

\begin{proof}[Proof of Lemma \ref{lem:small-mass:truncating:m.v_m->0}] From the $v_i-$equation in \eqref{eqn:GLE:N-particle:truncating}, variation constant formula yields
\begin{align}
mv_i^R(t)&=me^{-\frac{\gamma}{m}t}v_i(0)-\int_0^t e^{-\frac{\gamma}{m}(t-r)}\theta_R(|x_i^R(r)|)\grad U(x_i^R(r))\d r \nt\\
&\qquad -\int_0^t e^{-\frac{\gamma}{m}(t-r)}\sum_{j\neq i}\Big[\theta_R\big(|x_i^R(r)-x_j^R(r)|^{-1}\big) \grad G(x_i^R(r)-x_j^R(r))\Big]\d r \nt\\
&\qquad +\sum_{\ell=1}^{k_i}\lambda_{i,\ell}\int_0^t  e^{-\frac{\gamma}{m}(t-r)}z_{i,\ell}^R(r)\d r+\sqrt{2\gamma}\int_0^t e^{-\frac{\gamma}{m}(t-r)}\d W_{i,0}(r). \label{eqn:Duhamel:v_m^R:a}
\end{align}
With regard to the integral involving $z_{i,\ell}$, since $z_{i,\ell}^R$ satisfies the third equation in \eqref{eqn:GLE:N-particle:truncating}, we have
\begin{align*}
z_{i,\ell}^R(t)=e^{-\alpha_{i,\ell} t}z_{i,\ell}(0)-\lambda_{i,\ell}\int_0^t e^{-\alpha_{i,\ell}(t-r)}v_i^R(r)\d r+\sqrt{2\alpha_{i,\ell}}\int_0^t e^{-\alpha_{i,\ell}(t-r)}\d W_{i,\ell}(r).
\end{align*}
Plugging back into \eqref{eqn:Duhamel:v_m^R:a}, we obtain the identity
\begin{align}
mv_i^R(t)&=me^{-\frac{\gamma}{m}t}v_i(0)-\int_0^t e^{-\frac{\gamma}{m}(t-r)}\theta_R(|x_i^R(r)|)\grad U(x_i^R(r))\d r \nt \\
&\qquad -\int_0^t e^{-\frac{\gamma}{m}(t-r)}\sum_{j\neq i}\Big[\theta_R\big(|x_i^R(r)-x_j^R(r)|^{-1}\big) \grad G(x_i^R(r)-x_j^R(r))\Big]\d r \nt\\
&\qquad +\sum_{\ell=1}^{k_i}\lambda_{i,\ell}\int_0^t  e^{-\frac{\gamma}{m}(t-r)}e^{-\alpha_{i,\ell} r}z_{i,\ell}(0)\d r-\sum_{\ell=1}^{k_i}\lambda_{i,\ell}^2\int_0^t  e^{-\frac{\gamma}{m}(t-r)}\int_0^r e^{-\alpha_{i,\ell}(r-s)}v_i^R(s)\d s \d r \nt \\
&\qquad+\sum_{\ell=1}^{k_i}\lambda_{i,\ell}\sqrt{2\alpha_{i,\ell}}\int_0^t  e^{-\frac{\gamma}{m}(t-r)}\int_0^r e^{-\alpha_{i,\ell}(r-s)}\d W_{i,\ell}(s)\d r+ \sqrt{2\gamma}\int_0^t e^{-\frac{\gamma}{m}(t-r)}\d W_{i,0}(r)\nt\\
&=  me^{-\frac{\gamma}{m}t}v_i(0)-I_1-I_2+I_3-I_4+I_5+I_6.\label{eqn:Duhamel:v_m^R:b}
\end{align}
Concerning $I_1$, we invoke condition \eqref{cond:U:U'(x)=O(x^lambda)} while making use of the choice of $\theta_R$ as in \eqref{form:theta_R} to obtain
\begin{align*}
|I_1|\le C\int_0^t e^{-\frac{\gamma}{m}(t-r)}\d r= m C,
\end{align*}
for some positive constant $C=C(R)$ independent of $m$. Likewise, we employ condition \eqref{cond:G:grad.G(x)<1/|x|^beta} to infer
\begin{align*}
|I_2|\le C\int_0^t e^{-\frac{\gamma}{m}(t-r)}\d r= m C.
\end{align*}
Similarly, it is clear that
\begin{align*}
|I_3| \le m\sum_{\ell=1}^{k_i}\lambda_{i,\ell}|z_{i,\ell}(0)|.
\end{align*}
Concerning $I_4$, observe that
\begin{align*}
|I_4|\le \sum_{\ell=1}^{k_i}\lambda_{i,\ell}^2\int_0^t  e^{-\frac{\gamma}{m}(t-r)}\d r\int_0^t \sup_{s\in[0,r]}|v_i^R(s)| \d r\le  m\sum_{\ell=1}^{k_i}\lambda_{i,\ell}^2\int_0^t \sup_{s\in[0,r]}|v_i^R(s)| \d r.
\end{align*}
With regard to the noise term $I_5$, note that 
\begin{align*}
&\Big|\int_0^t  e^{-\frac{\gamma}{m}(t-r)}\int_0^r e^{-\alpha_{i,\ell}(r-s)}\d W_{i,\ell}(s)\d r\Big|^n \\
&\le  \Big|\int_0^t  e^{-\frac{\gamma}{m}(t-r)}\d r\Big|^n  \sup_{r\in[0,T]}\Big|\int_0^r e^{-\alpha_{i,\ell}(r-s)}\d W_{i,\ell}(s)\Big|^n\\
&\le \frac{m^n}{\gamma^n}  \sup_{r\in[0,T]}\Big|\int_0^r e^{-\alpha_{i,\ell}(r-s)}\d W_{i,\ell}(s)\Big|^n.
\end{align*}
We employ Burkholder's inequality to infer
\begin{align*}
&\E\sup_{t\in[0,T]}\Big|\int_0^t  e^{-\frac{\gamma}{m}(t-r)}\int_0^r e^{-\alpha_{i,\ell}(r-s)}\d W_{i,\ell}(s)\d r\Big|^n\\
&\le \frac{m^n}{\gamma^n}\E\sup_{t\in[0,T]}\Big|\int_0^t e^{\alpha_{i,\ell}s}\d W_{i,\ell}(s)\Big|^n \le m^n  C(T,n).
\end{align*}
Turning to $I_6$,
we set
\begin{align*}
Y_{i,0}(t):=\int_0^t e^{-\beta(t-r)}\d W_{i,0}(r),\quad \beta:=\frac{\gamma}{m},
\end{align*}
and let $Z_{i,0}\sim N(0,1)$ be a random variable independent of $W_{i,0}(t)$. Then, the process
\begin{align*}
X_{i,0}(t):=Z_{i,0}e^{-\beta t}+\sqrt{2\beta}Y_{i,0}(t)
\end{align*}
is a stationary solution to 
\begin{align*}
\d X_{i,0}(t)=-\beta X_{i,0}(t)\d t+\sqrt{2\beta}\d W_{i,0}(t),\quad X_{i,0}(0)=Z.
\end{align*}
For $n\ge 1$, it holds by the definition of $X_{i,0}(t)$ that
\begin{align*}
\E\sup_{t\in[0,T]}|Y_{i,0}(t)|^n \le C\beta^{-\frac{n}{2}}\bigg( 1+\E\sup_{t\in[0,T]}|X_{i,0}(t)|^n \bigg)
\end{align*}
By \cite[Lemma B.1]{pavliotis2022derivative}, it holds for all $n>1$ that
\begin{align*}
 \E \sup_{t\in[0,T]}|X_{i,0}(t)|^n\le C\big( 1+\log(1+\beta T) \big)^{n/2}
\end{align*}
As a consequence, for all $\varepsilon>0$
\begin{align*}
\beta^{\frac{n}{2}-\varepsilon}\E\sup_{t\in[0,T]}|Y_{i,0}(t)|^n <C(T,n).
\end{align*}
In other words,
\begin{align*}
\E\sup_{t\in[0,T]}|Y_{i,0}(t)|^n < m^{\frac{n}{2}-\varepsilon} C(T,n).
\end{align*}

Now we collect the above estimates together with expression \eqref{eqn:Duhamel:v_m^R:b} to deduce that for all $n>1$ and $\varepsilon>0$
\begin{align*}
&m^n\E\sup_{t\in[0,T]}|\vb^R(t)|^n\\
& \le m^n C\Big( |\vb(0)|^n+\sum_{i=1}^N|\zb_i^R(0)|^n+ 1  \Big)+m^{\frac{n}{2}-\varepsilon}C+C m^n \int_0^T \E\sup_{r\in[0,t]}|\vb^R(r)|^n\d t,
\end{align*}
holds for some positive constant $C=C(T,n,R,\varepsilon)$ independent of $m$. In view of Gronwall's inequality, for all $n>1$, we arrive at \eqref{lim:small-mass:truncating:m.v_m->0}, as claimed. 
\end{proof}

\subsection{Estimates on \eqref{eqn:GLE:N-particle:m=0}} \label{sec:small-mass:estimate-m=0}

In this subsection, we provide several energy estimates on the limiting system \eqref{eqn:GLE:N-particle:m=0} on any finite time window. More precisely, we have the following result.

\begin{lemma} \label{lem:GLE:N-particle:m=0:supnorm}
Under Assumption \ref{cond:U}, Assumption \ref{cond:G} and Assumption \ref{cond:G:d=1}, for all $(\xb(0),\zb_{1}(0)$,..., $\zb_N(0))\in \D\times(\rbb^d)^{N}$, let $Q(t)=\big(\qb(t)$, $\fb_{1}(t)$,$\dots$, $\fb_{N}(t)\big)$ be the solution of~\eqref{eqn:GLE:N-particle:m=0} and $\beta_1$ be the constant as in Assumption \ref{cond:G} and Assumption \ref{cond:G:d=1}. For all $\varepsilon,\kappa>0$ sufficiently small and $T>0$, the followings hold:

\textup{(a)} If $\beta_1>1$,
\begin{align} \label{ineq:GLE:N-particle:m=0:supnorm:beta_1>1}
\E\Big[\exp\Big\{\sup_{t\in[0,T]}\Big(\kappa|\qb(t)|^2+\kappa\varepsilon\close\sum_{1\le i<j\le N} \frac{1}{|q_i(t)-q_j(t)|^{\beta_1-1}}\Big)\Big\}\Big]\le C,
\end{align}
for some positive constant $C=C(\kappa,\varepsilon, T,\xb(0),\zb_{1}(0),\dots,\zb_N(0))$.

\textup{(b)} Otherwise, if $\beta_1=1$,
\begin{align} \label{ineq:GLE:N-particle:m=0:supnorm:beta_1=1}
\E\Big[\exp\Big\{\sup_{t\in[0,T]}\Big(\kappa|\qb(t)|^2-\kappa\varepsilon\close\sum_{1\le i<j\le N}\close \log|q_i(t)-q_j(t)|\Big)\Big\}\Big]\le C.
\end{align}

\end{lemma}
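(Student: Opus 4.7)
The plan is to construct a Lyapunov-like function $\Phi(\mathbf{q},\mathbf{f})$ that dominates the expression in the exponential and satisfies a drift bound suitable for an exponential martingale argument. I would take
\[ \Phi(\mathbf{q},\mathbf{f}) := \tfrac{1}{2}|\mathbf{q}|^2 + \sum_{i,\ell}\tfrac{1}{2\alpha_{i,\ell}}|f_{i,\ell}|^2 + \varepsilon\,\psi(\mathbf{q}), \]
with $\psi(\mathbf{q}) := \sum_{i<j}|q_i-q_j|^{-(\beta_1-1)}$ in case (a) and $\psi(\mathbf{q}) := -\sum_{i<j}\log|q_i-q_j|$ in case (b), where $\varepsilon>0$ is a small parameter. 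Denoting by $\mathcal{L}$ the generator of~\eqref{eqn:GLE:N-particle:m=0}, the goal is the uniform state-space bound $\kappa \mathcal{L}\Phi + \tfrac{\kappa^2}{2}|\sigma^T\nabla\Phi|^2 \leq K$ for all sufficiently small $\kappa,\varepsilon$, which implies that $e^{-Kt}e^{\kappa\Phi(t)}$ is a local supermartingale, from which the supremum moment bound can be extracted.

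I would compute $\mathcal{L}\Phi$ term by term. The quadratic contributions from $\tfrac{1}{2}|\mathbf{q}|^2$ and $\tfrac{1}{2\alpha_{i,\ell}}|f_{i,\ell}|^2$ yield dissipation $-\tfrac{a_2}{\gamma}\sum_i|q_i|^{\lambda+1}-\tfrac{1}{\gamma}\sum_{i,\ell}\lambda_{i,\ell}^2|q_i|^2 - \sum_{i,\ell}|f_{i,\ell}|^2$ after invoking Assumption~\ref{cond:U} and absorbing the cross terms $\lambda_{i,\ell}\langle q_i,f_{i,\ell}\rangle$ via Young's inequality; the interaction drift $-\sum_{j\neq i}\nabla G(q_i-q_j)$ paired with $q_i$ gives lower-order singular contributions controlled by~\eqref{cond:G:grad.G(x)<1/|x|^beta} and absorbed by $\varepsilon\mathcal{L}\psi$. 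The decisive computation is $\mathcal{L}\psi$: using the leading order $-\nabla G(x)\approx a_4 x|x|^{-(\beta_1+1)}$ from~\eqref{cond:G:|grad.G(x)+q/|x|^beta_1|<1/|x|^beta_2}, the drift-gradient pairing produces $-c\sum_{i<j}|q_i-q_j|^{-2\beta_1}$ (up to lower-order perturbations), while the Itô-generated Laplacian term has order $|q_i-q_j|^{-(\beta_1+1)}$ in case (a) and $(2-d)|q_i-q_j|^{-2}$ in case (b). In case (a), $2\beta_1 > \beta_1+1$ so the singular drift dominates; in case (b) with $d\geq 2$, the Laplacian term is nonpositive and the drift term provides the required dissipation. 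In the borderline case $\beta_1=1$, $d=1$, one must balance $\tfrac{2(1-a_4)}{\gamma}\sum_{i<j}|q_i-q_j|^{-2}$ (from the diagonal drift-Laplacian combination) and the quadratic variation $\sim \tfrac{\kappa \varepsilon^2}{\gamma}\sum_{i<j}|q_i-q_j|^{-2}$ against the off-diagonal triple-particle contributions coming from expanding $\sum_i|\nabla_{q_i}\psi|^2$; these are estimated via Lemma~\ref{lem:<|x_i-x_j|^s,|x_i-x_ell|^s>} in the Appendix, and the net expression remains dissipative precisely under the threshold $a_4 > 1/2$ imposed by Assumption~\ref{cond:G:d=1}.

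With the drift-plus-quadratic-variation bound established, Itô's formula gives
\[ e^{\kappa\Phi(t)} \leq e^{\kappa\Phi(0)} + K\int_0^t e^{\kappa\Phi(s)}\,ds + M_t, \]
where $M_t$ is a local martingale of quadratic variation bounded by $\kappa^2\int_0^t e^{2\kappa\Phi(s)}|\sigma^T\nabla\Phi|^2\,ds$. Taking $\sup_{t\leq T}$ and expectation, applying the Burkholder–Davis–Gundy inequality to $\sup_t|M_t|$, and invoking Gronwall together with Fatou's lemma yields the desired bound on $\mathbb{E}\sup_{t\in[0,T]}e^{\kappa\Phi(t)}$; since $\Phi$ dominates the expressions on the left-hand sides of \eqref{ineq:GLE:N-particle:m=0:supnorm:beta_1>1} and \eqref{ineq:GLE:N-particle:m=0:supnorm:beta_1=1} up to constants, the conclusion follows. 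To make rigorous sense of Itô on the singular $\Phi$, I would first localize via stopping times $\tau_R := \inf\{t\geq 0 : \sum_{i<j}|q_i(t)-q_j(t)|^{-1}\geq R\}$, derive the bound on $[0,T\wedge\tau_R]$, and then pass $R\to\infty$ using Proposition~\ref{prop:well-posedness:m=0} to guarantee that no collision occurs almost surely on finite time intervals. The principal obstacle is the triple-particle cancellation in case~(b) with $d=1$: that is precisely where Lemma~\ref{lem:<|x_i-x_j|^s,|x_i-x_ell|^s>} and the restriction $a_4>1/2$ are indispensable, and this analysis accounts for the gap in the range of $a_4$ for which the result is known, as noted in Remark~\ref{rem:G:small-mass:d=1:beta_1=1}.
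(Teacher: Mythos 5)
Your construction of $\Phi$ coincides (up to an overall factor of $\gamma$) with the paper's $\Gamma_1$ and $\Gamma_2$ in \eqref{form:Gamma_1}, \eqref{form:Gamma_2}, and the term-by-term drift computation you outline follows the same blueprint; you also correctly identify the localization via $\tau_R$ and that part (b) with $d=1$ is where $a_4>1/2$ enters. Two points diverge from the paper, and one of them contains a real gap.

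First, the martingale-supremum step. The paper does \emph{not} apply It\^o to $e^{\kappa\Gamma}$ followed by BDG and Gronwall. Instead, after obtaining the pathwise bound $\kappa\,\d\Gamma(t)\le C\kappa\,\d t-\tfrac{c}{\kappa}\d\langle M\rangle(t)+\d M(t)$ (by showing that $\d\langle M\rangle$ is dominated by the dissipative terms in $\d\Gamma$), it invokes the exponential martingale inequality \eqref{ineq:exponential-Martingale} to get a Gaussian tail for $\sup_{t\le T}\big[M(t)-\tfrac{c}{\kappa}\langle M\rangle(t)\big]$, and hence for $\sup_{t\le T}\kappa\Gamma(t)$; integrating the tail yields \eqref{ineq:E.sup.kappa.Gamma_1(t)}. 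Your BDG route, as written, does not close: $\E\sup_t|M_t|\lesssim\E\langle M\rangle_T^{1/2}$ with $\langle M\rangle_T=\kappa^2\int_0^T e^{2\kappa\Phi_s}|\sigma^\top\nabla\Phi|^2\,\d s$ involves $e^{2\kappa\Phi}$, i.e.\ twice the exponent you are in the middle of bounding, so Gronwall cannot be applied directly; you would need a further localization-in-$\kappa$ or splitting argument that you have not supplied. This is the main gap in your sketch.

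Second, the role of Lemma~\ref{lem:<|x_i-x_j|^s,|x_i-x_ell|^s>} is misattributed. In the paper's proof, the lemma is applied to the \emph{drift} pairing $-\varepsilon\sum_i\big\la\nabla_{q_i}\psi,\,-\sum_{\ell\ne i}\nabla G(q_i-q_\ell)\big\ra$: after replacing $-\nabla G$ by its leading piece $a_4\,x|x|^{-(\beta_1+1)}$ via \eqref{cond:G:|grad.G(x)+q/|x|^beta_1|<1/|x|^beta_2}, the triple sum is bounded \emph{below} by $2\sum_{i<j}|q_i-q_j|^{-2\beta_1}$ (part (b), with $s=\beta_1$), which upper-bounds the drift contribution by $-2a_4\varepsilon\sum_{i<j}|q_i-q_j|^{-2\beta_1}$ plus lower order. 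The quadratic variation $\varepsilon^2\sum_i|\nabla_{q_i}\psi|^2$ carries a prefactor $\varepsilon^2$ and is subsumed by the drift via Cauchy--Schwarz without any cancellation; there is no favorable ``off-diagonal'' contribution to balance against. In particular, Lemma~A.2(b) asserts the off-diagonal terms are \emph{nonnegative}, so they push in the wrong direction for the kind of balancing you describe. The threshold $a_4>1/2$ is instead read off by adding the drift bound $-2a_4\varepsilon\sum_{i<j}|q_i-q_j|^{-2}$ to the It\^o (Laplacian) correction, which for $d=1$ is $+\varepsilon\sum_{i<j}|q_i-q_j|^{-2}$ in the paper's normalization \eqref{eqn:d.log|q_i-q_j|}.
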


The proof of Lemma \ref{lem:GLE:N-particle:m=0:supnorm} relies on two ingredients: the choice of Lyapunov functions specifically designed for \eqref{eqn:GLE:N-particle:m=0} and the exponential Martingale inequality. Later in Section \ref{sec:small-mass:proof-of-theorem}, we will particularly exploit Lemma \ref{lem:GLE:N-particle:m=0:supnorm} to remove the Lipschitz constraint in Proposition \ref{prop:small-mass:truncating} so as to conclude the main Theorem \ref{thm:small-mass:N-particle}.

\begin{proof}[Proof of Lemma \ref{lem:GLE:N-particle:m=0:supnorm}] (a) Suppose that $\beta_1>1$. For $\varepsilon>0$, we consider the functions $\Gamma_1$ given by
\begin{align} \label{form:Gamma_1}
\Gamma_1(\qb,\fb_1,\dots,\fb_N) &= \frac{1}{2}\gamma|\qb|^2 +\frac{1}{2}\sum_{i=1}^N \sum_{\ell=1}^{k_i}\frac{1}{\alpha_{i,\ell}}|f_{i,\ell}|^2
+\varepsilon\,\gamma\close\sum_{1\le i<j\le N}\frac{1}{ |q_i-q_j|^{\beta_1-1}}.
\end{align} 
We aim to show that $\Gamma_1(t)$ satisfies suitable energy estimate, allowing for establishing the moment bound in sup norm \eqref{ineq:GLE:N-particle:m=0:supnorm:beta_1>1}.

From \eqref{eqn:GLE:N-particle:m=0}, we employ It\^o's formula to obtain
\begin{align*}
&\d \Big(\frac{1}{2}\gamma|\qb(t)|^2 +\frac{1}{2}\sum_{i=1}^N \sum_{\ell=1}^{k_i}\frac{1}{\alpha_{i,\ell}}|f_{i,\ell}(t)|^2\Big) \\
 &= - \sum_{i=1}^N \Big(\sum_{\ell=1}^{k_i}\lambda_{i,\ell}^2\Big)|q_i(t)|^2\d t-\sum_{i=1}^N \la \grad U(q_i(t)),q_i(t)\ra\d t +N\gamma^2d\, \d t\\
&\qquad-\close\sum_{1\le i<j\le N}\close \la \grad G(q_i(t)-q_j(t)),q_i(t)-q_j(t)\ra\d t +\sum_{i=1}^N\sqrt{2\gamma}\la q_i(t),\d W_{i,0}(t)\ra\\
&\qquad -\sum_{i=1}^N \sum_{\ell=1}^{k_i}|f_{i,\ell}|^2\d t+ \alpha_{i,\ell}^2d\,\d t+ \sqrt{2\alpha_{i,\ell}}\la f_{i,\ell}(t),\d W_{i,\ell}(t)\ra .
\end{align*}
Recalling condition \eqref{cond:U:x.U'(x)>-x^(lambda+1)}, we readily have
\begin{align*}
-\sum_{i=1}^N \la \grad U(q_i(t)),q_i(t)\ra \le -Na_2\sum_{i=1}^N|q_i(t)|^{\lambda+1}+Na_3 .
\end{align*}
To bound the cross terms involving $G$, we invoke~\eqref{cond:G:grad.G(x)<1/|x|^beta} and obtain
\begin{align*}
-\close\sum_{1\le i<j\le N}\close \la \grad G(q_i(t)-q_j(t)),q_i(t)-q_j(t)\ra\le a_1\close\sum_{1\le i<j\le N}\close\Big(|q_i(t)-q_j(t)|+\frac{1}{|q_i(t)-q_j(t)|^{\beta_1-1}}\Big).
\end{align*}
Since $\sum_{1\le i<j\le N}|q_i-q_j|$ can be subsumed into $-|\qb|^2$, we infer from the above estimates that
\begin{align}
&\d \Big(\frac{1}{2}\gamma|\qb(t)|^2 +\frac{1}{2}\sum_{i=1}^N \sum_{\ell=1}^{k_i}\frac{1}{\alpha_{i,\ell}}|f_{i,\ell}(t)|^2\Big) \nt  \\
 &\le  - c|\qb(t)|^2\d t-c|\qb(t)|^{\lambda+1}\d t-\sum_{i=1}^N |\fb_i|^2 +C \d t+ a_1\close\sum_{1\le i<j\le N}\frac{1}{|q_i-q_j|^{\beta_1-1}}\d t \nt \\
&\qquad +\sum_{i=1}^N\sqrt{2\gamma}\la q_i(t),\d W_{i,0}(t)\ra+\sum_{i=1}^N \sum_{\ell=1}^{k_i} \sqrt{2\alpha_{i,\ell}}\la f_{i,\ell}(t),\d W_{i,\ell}(t)\ra, \label{ineq:d.(|q|^2+|f|^2)}
\end{align}
for some positive constant $c,\,C$ independent of $t$.

Turning to the last term on the right-hand side of \eqref{form:Gamma_1}, a routine computation gives
\begin{align}
&\d \Big( \varepsilon\,\gamma\close\sum_{1\le i<j\le N}\frac{1}{|q_i(t)-q_j(t)|^{\beta_1-1}}\Big) \nt \\
&= -\varepsilon(\beta_1-1)\sum_{i=1}^N \Big\la \sum_{j\neq i}\frac{q_i(t)-q_j(t)}{|q_i(t)-q_j(t)|^{\beta_1+1}} , -\grad U(q_i(t))\d t- \sum_{\ell\neq i}\grad \G\big(q_i(t)-q_\ell(t)\big) \d t \nt \\
&\qquad\qquad -\sum_{i=1}^{k_i} \lambda_{i,\ell}^2 q_i(t)\d t+\sum_{\ell=1}^{k_i}\lambda_{i,\ell} f_{i,\ell}(t)\d t+\sqrt{2\gamma}\d W_{i,0}(t) \Big\ra \nt \\
&\qquad -\varepsilon(\beta_1-1)(\beta_1+1-d)\close\sum_{1\le i<j\le N} \frac{1}{|q_i(t)-q_j(t)|^{\beta_1+1}}\d t. \label{eqn:d.1/|q_i-q_j|^(beta_1-1)}
\end{align}
Using Cauchy-Schwarz inequality, it is clear that
\begin{align*}
&-\varepsilon(\beta_1-1)\sum_{i=1}^N \Big\la \sum_{j\neq i}\frac{q_i(t)-q_j(t)}{|q_i(t)-q_j(t)|^{\beta_1+1}} , -\sum_{i=1}^{k_i} \lambda_{i,\ell}^2 q_i(t)+\sum_{\ell=1}^{k_i}\lambda_{i,\ell} f_{i,\ell}(t)\Big\ra \\
&\le C\varepsilon^{1/2}|\qb(t)|^2+C\varepsilon^{1/2}\sum_{i=1}^N |\fb_i(t)|^2+C\varepsilon^{3/2}\close\sum_{1\le i<j\le N}\frac{1}{|q_i(t)-q_j(t)|^{2\beta_1}}.
\end{align*}
Concerning the cross terms involving $U$ on the right-hand side of \eqref{eqn:d.1/|q_i-q_j|^(beta_1-1)}, we recall that $\grad U$ satisfies \eqref{cond:U:U''(x)=O(x^lambda-1)}. In light of the Mean Value Theorem, we infer
\begin{align*}
& -\varepsilon(\beta_1-1)\sum_{i=1}^N \Big\la \sum_{j\neq i}\frac{q_i(t)-q_j(t)}{|q_i(t)-q_j(t)|^{\beta_1+1}} , -\grad U(q_i(t))\Big\ra\\
&= \varepsilon(\beta_1-1)\close\sum_{1\le i<j\le N}\frac{\la q_i(t)-q_j(t),\grad U(q_i(t))-\grad U(q_j(t))\ra}{|q_i(t)-q_j(t)|^2}\\
&\le C\varepsilon\close\sum_{1\le i<j\le N}\big(|q_i(t)|^{\lambda-1}+|q_j(t)|^{\lambda-1}\big)\le C\varepsilon |\qb(t)|^{\lambda-1}.
\end{align*}
Turning to the cross terms involving $G$ on the right-hand side of \eqref{eqn:d.log|q_i-q_j|}, we recast them as follows:
\begin{align*}
&-\varepsilon(\beta_1-1)\sum_{i=1}^N \Big\la \sum_{j\neq i}\frac{q_i(t)-q_j(t)}{|q_i(t)-q_j(t)|^{\beta_1+1}} , - \sum_{\ell\neq i}\grad \G\big(q_i(t)-q_\ell(t)\big)\Big\ra\\
&=  -\varepsilon(\beta_1-1)\sum_{i=1}^N \Big\la \sum_{j\neq i}\frac{q_i(t)-q_j(t)}{|q_i(t)-q_j(t)|^{\beta_1+1}} , a_4  \sum_{\ell\neq i}\frac{q_i(t)-q_\ell(t)}{|q_i(t)-q_\ell(t)|^{\beta_1+1}}\Big\ra \\
&\qquad +\varepsilon(\beta_1-1)\sum_{i=1}^N \Big\la \sum_{j\neq i}\frac{q_i(t)-q_j(t)}{|q_i(t)-q_j(t)|^{\beta_1+1}} ,  \sum_{\ell\neq i}\grad \G\big(q_i(t)-q_\ell(t)\big)+a_4\frac{q_i(t)-q_\ell(t)}{|q_i(t)-q_\ell(t)|^{\beta_1+1}}\Big\ra.
\end{align*}
In view of Lemma~\ref{lem:<|x_i-x_j|^s,|x_i-x_ell|^s>}, cf. \eqref{ineq:|x_i-x_i|:<|x_i-x_j|^s,|x_i-x_ell|^s>},   we find
\begin{align*}
&-\varepsilon(\beta_1-1)\sum_{i=1}^N \Big\la \sum_{j\neq i}\frac{q_i(t)-q_j(t)}{|q_i(t)-q_j(t)|^{\beta_1+1}} , a_4  \sum_{\ell\neq i}\frac{q_i(t)-q_\ell(t)}{|q_i(t)-q_\ell(t)|^{\beta_1+1}}\Big\ra\\
&\le -\varepsilon(\beta_1-1)a_4 \cdot \frac{4}{N(N-1)^2}\sum_{1\le i <j\le N}\frac{1}{|q_i(t)-q_j(t)|^{2\beta_1}}.
\end{align*}
On the other hand, the condition \eqref{cond:G:|grad.G(x)+q/|x|^beta_1|<1/|x|^beta_2} implies
\begin{align*}
&\varepsilon(\beta_1-1)\sum_{i=1}^N \Big\la \sum_{j\neq i}\frac{q_i(t)-q_j(t)}{|q_i(t)-q_j(t)|^{\beta_1+1}} ,  \sum_{\ell\neq i}\grad \G\big(q_i(t)-q_\ell(t)\big)+a_4\frac{q_i(t)-q_\ell(t)}{|q_i(t)-q_\ell(t)|^{\beta_1+1}}\Big\ra \\
&\le \varepsilon(\beta_1-1)\sum_{i=1}^N \Big\la \sum_{j\neq i}\frac{1}{|q_i(t)-q_j(t)|^{\beta_1}} ,  \sum_{\ell\neq i}\Big(a_5\frac{1}{|q_i(t)-q_\ell(t)|^{\beta_2}}+a_6\Big)\Big\ra\\
&\le \varepsilon^{3/2} C \sum_{1\le i <j\le N}\frac{1}{|q_i(t)-q_j(t)|^{2\beta_1}}+ \varepsilon^{1/2} C \sum_{1\le i <j\le N}\frac{1}{|q_i(t)-q_j(t)|^{2\beta_2}}+C,
\end{align*}
for some positive constant $C$ independent of $t$ and $\varepsilon$. Since $\beta_2<\beta_1$, cf. \eqref{cond:G:|grad.G(x)+q/|x|^beta_1|<1/|x|^beta_2},  taking $\varepsilon$ small enough produces the bound
\begin{align}
&-\varepsilon(\beta_1-1)\sum_{i=1}^N \Big\la \sum_{j\neq i}\frac{q_i(t)-q_j(t)}{|q_i(t)-q_j(t)|^{\beta_1+1}} , - \sum_{\ell\neq i}\grad \G\big(q_i(t)-q_\ell(t)\big)\Big\ra \nt \\
&\le  -\varepsilon\,  c\close \sum_{1\le i <j\le N}\frac{1}{|q_i(t)-q_j(t)|^{2\beta_1}}. \label{ineq:<1/|q_i-q_j|^(beta_1),grad.G>}
\end{align}
Similarly, regarding the last term on the right-hand side of \eqref{eqn:d.1/|q_i-q_j|^(beta_1-1)}, since in Case 1, $\beta_1>1$, it is clear that $\varepsilon |q_i-q_j|^{-\beta_1-1} $ can also be subsumed into $-\varepsilon|q_i-q_j|^{-2\beta_1}$ as in \eqref{ineq:<1/|q_i-q_j|^(beta_1),grad.G>}. Altogether with the expression \eqref{eqn:d.1/|q_i-q_j|^(beta_1-1)}, we arrive at the estimate
\begin{align}
&\d \Big( \varepsilon\,\gamma\close\sum_{1\le i<j\le N}\frac{1}{|q_i(t)-q_j(t)|^{\beta_1-1}}\Big) \nt\\
&\le \varepsilon^{1/2} C |\qb(t)|^2\d t+ \varepsilon^{1/2} C|\qb(t)|^{\lambda-1}\d t+\varepsilon^{1/2} C\sum_{i=1}^N |\fb_i(t)|^2\d t+C\d t \nt \\
&\qquad-\varepsilon \,c \close\sum_{1\le i<j \le N}\frac{1}{|q_i(t)-q_j(t)|^{2\beta_1}} \d t - \varepsilon\sum_{i=1}^N \Big\la \sum_{j\neq i}\frac{q_i(t)-q_j(t)}{|q_i(t)-q_j(t)|^{\beta_1+1}} , \sqrt{2\gamma}\d W_{i,0}(t) \Big\ra.\label{ineq:d.1/|q_i-q_j|^{beta_1-1}}
\end{align}
Next, from the expression \eqref{form:Gamma_1} of $\Gamma_1$ and the estimates \eqref{ineq:d.(|q|^2+|f|^2)} and \eqref{ineq:d.1/|q_i-q_j|^{beta_1-1}}, we obtain (by taking $\varepsilon$ small enough)
\begin{align} \label{ineq:d.Gamma_1}
\d \Gamma_1(t) &\le - c|\qb(t)|^2\d t-c|\qb(t)|^{\lambda+1}\d t-c\sum_{i=1}^N |\fb_i|^2 -\varepsilon\, c \close\sum_{1\le i<j \le N}\frac{1}{|q_i(t)-q_j(t)|^{2\beta_1}} \d t+C \d t \nt \\
&\qquad+ \sum_{i=1}^N\sqrt{2\gamma}\la q_i(t),\d W_{i,0}(t)\ra+\sum_{i=1}^N \sum_{\ell=1}^{k_i} \sqrt{2\alpha_{i,\ell}}\la f_{i,\ell}(t),\d W_{i,\ell}(t)\ra \nt\\
& \qquad - \varepsilon\sum_{i=1}^N \Big\la \sum_{j\neq i}\frac{q_i(t)-q_j(t)}{|q_i(t)-q_j(t)|^{\beta_1+1}} , \sqrt{2\gamma}\d W_{i,0}(t) \Big\ra.
\end{align}
We emphasize that in \eqref{ineq:d.Gamma_1}, $c>0$ is independent of $\varepsilon$ whereas $C>0$ may still depend on $\varepsilon$.

Now, to establish~\eqref{ineq:GLE:N-particle:m=0:supnorm:beta_1>1}, we aim to employ the well-known exponential Martingale inequality applied to~\eqref{ineq:d.Gamma_1}. The argument below is similarly to that found in \cite[Lemma 5.1]{hairer2008spectral}. See also \cite{glatt2021long,glatt2022short}.

For $\kappa\in(0,1)$ to be chosen later, from \eqref{ineq:d.Gamma_1}, we observe that
 \begin{align} \label{ineq:d.kappa.Gamma_1}
\kappa\d \Gamma_1(t) &\le - c\kappa|\qb(t)|^2\d t-c\kappa |\qb(t)|^{\lambda+1}\d t-c\kappa\sum_{i=1}^N |\fb_i|^2 +C\kappa \d t \nt \\
&\qquad- \kappa\varepsilon\, c  \close\sum_{1\le i<j \le N}\frac{1}{|q_i(t)-q_j(t)|^{2\beta_1}} \d t+\d M_1(t).
\end{align}
In the above, the semi-Martingale term $M_1(t)$ is defined as
\begin{align*}
\d M_1(t) &= \kappa\sum_{i=1}^N\sqrt{2\gamma}\la q_i(t),\d W_{i,0}(t)\ra+\kappa\sum_{i=1}^N \sum_{\ell=1}^{k_i} \sqrt{2\alpha_{i,\ell}}\la f_{i,\ell}(t),\d W_{i,\ell}(t)\ra \nt\\
&  -\kappa \varepsilon\sum_{i=1}^N \Big\la \sum_{j\neq i}\frac{q_i(t)-q_j(t)}{|q_i(t)-q_j(t)|^{\beta_1+1}} , \sqrt{2\gamma}\d W_{i,0}(t) \Big\ra,
\end{align*}
whose variation process $\la M_1\ra(t)$ is given by
\begin{align*}
\d \la M_1 \ra (t) &=2\gamma \kappa^2 \Big|\sum_{i=1}^N\Big( q_i(t)- \varepsilon \sum_{j\neq i}\frac{q_i(t)-q_j(t)}{|q_i(t)-q_j(t)|^{\beta_1+1}}\Big)\Big|^2\d t+\kappa^2\sum_{i=1}^N \sum_{\ell=1}^{k_i} 2\alpha_{i,\ell}| f_{i,\ell}(t)|^2\d t. \nt
\end{align*}
Using Cauchy-Schwarz inequality, it is clear that
\begin{align*}
\d \la M_1 \ra (t) &\le  - \ctilde \kappa^2|\qb(t)|^2\d t-\ctilde\kappa^2\sum_{i=1}^N |\fb_i|^2 \d t- \kappa^2\varepsilon\ctilde  \close\sum_{1\le i<j \le N}\frac{1}{|q_i(t)-q_j(t)|^{2\beta_1}} \d t,
\end{align*}
for some positive constant $\ctilde$ independent of both $\kappa$ and $\varepsilon$. It follows from \eqref{ineq:d.kappa.Gamma_1} that
\begin{align*}
\kappa\d \Gamma_1(t) &\le C\kappa \d t-\frac{c}{\kappa}\la M_1\ra(t)\d t + \d M_1(t).
\end{align*}
Recalling the exponential Martingale inequality applying to $M_1(t)$,
\begin{align}\label{ineq:exponential-Martingale}
\P\Big(\sup_{t\ge 0}\Big[M_1(t)-\frac{c}{\kappa}\la M_1\ra(t)\Big] >r\Big)\le e^{-\frac{2c}{\kappa}r}, \quad r\ge 0,
\end{align}
we deduce that
\begin{align*}
\P\Big(\sup_{t\in[0,T]}\Big[\kappa\Gamma_1(t)-\kappa\Gamma_1(0)-\kappa Ct\Big] >r\Big)\le e^{-\frac{2c}{\kappa}r}.
\end{align*}
In particular, by choosing $\kappa$ sufficiently small, the above inequality implies
\begin{align} \label{ineq:E.sup.kappa.Gamma_1(t)}
\E\exp\Big\{\sup_{t\in[0,T]}\kappa\Gamma_1(t)\Big\}\le C,
\end{align}
for some positive constant $C=C(T,\kappa,\varepsilon,\xb(0),\zb_1(0),\dots,\zb_N(0) )$. Recalling $\Gamma_1$ as in \eqref{form:Gamma_1}, the estimate \eqref{ineq:E.sup.kappa.Gamma_1(t)} produces \eqref{ineq:GLE:N-particle:m=0:supnorm:beta_1>1}. Hence, part (a)  is established for $\beta_1>1$.

(b) Considering $\beta_1=1$, in this case, we introduce the function $\Gamma_2$ defined as
\begin{align} \label{form:Gamma_2}
\Gamma_2(\qb,\fb_1,\dots,\fb_N) &= \frac{1}{2}\gamma|\qb|^2 +\frac{1}{2}\sum_{i=1}^N \sum_{\ell=1}^{k_i}\frac{1}{\alpha_{i,\ell}}|f_{i,\ell}|^2
-\varepsilon\,\gamma\close\sum_{1\le i<j\le N}\close\log |q_i-q_j|.
\end{align}  
With regard to the log term on the above right-hand side, the following identity holds
\begin{align}
&\d \Big(-\varepsilon\gamma\close\sum_{1\le i<j\le N}\close\log |q_i(t)-q_j(t)|\Big)  \nt \\
&= -\varepsilon\sum_{i=1}^N \Big\la \sum_{j\neq i}\frac{q_i(t)-q_j(t)}{|q_i(t)-q_j(t)|^2} , -\grad U(q_i(t))\d t- \sum_{\ell\neq i}\grad \G\big(q_i(t)-q_\ell(t)\big) \d t \nt \\
&\qquad\qquad -\sum_{i=1}^{k_i} \lambda_{i,\ell}^2 q_i(t)\d t+\sum_{\ell=1}^{k_i}\lambda_{i,\ell} f_{i,\ell}(t)\d t+\sqrt{2\gamma}\d W_{i,0}(t) \Big\ra \nt \\
&\qquad -\varepsilon(d-2)\close\sum_{1\le i<j\le N} \frac{1}{|q_i(t)-q_j(t)|^2}\d t. \label{eqn:d.log|q_i-q_j|}
\end{align}
Similarly to the estimates in Case 1, we readily have
\begin{align}
&-\varepsilon\sum_{i=1}^N \Big\la \sum_{j\neq i}\frac{q_i(t)-q_j(t)}{|q_i(t)-q_j(t)|^{2}} , -\grad U(q_i(t))-\sum_{i=1}^{k_i} \lambda_{i,\ell}^2 q_i(t)+\sum_{\ell=1}^{k_i}\lambda_{i,\ell} f_{i,\ell}(t)\Big\ra \nt  \\
&\le C\varepsilon^{1/2}|\qb(t)|^2+C\varepsilon |\qb(t)|^{\lambda-1}+C\varepsilon^{1/2}\sum_{i=1}^N |\fb_i(t)|^2+C\varepsilon^{3/2}\close\sum_{1\le i<j\le N}\frac{1}{|q_i(t)-q_j(t)|^{2}}.\label{ineq:d.log|q_i-q_j|:U}
\end{align}
Concerning the cross terms involving $G$ on the right-hand side of \eqref{eqn:d.log|q_i-q_j|}. We employ an argument from Case 1 while making use of condition \eqref{cond:G:|grad.G(x)+q/|x|^beta_1|<1/|x|^beta_2} and the estimate \eqref{ineq:|x_i-x_i|:<|x_i-x_j|^s,|x_i-x_ell|^s>:s<1} to arrive at
\begin{align}
&-\varepsilon\sum_{i=1}^N \Big\la \sum_{j\neq i}\frac{q_i(t)-q_j(t)}{|q_i(t)-q_j(t)|^2} , - \sum_{\ell\neq i}\grad \G\big(q_i(t)-q_\ell(t)\big)\Big\ra \nt \\
&\le -2a_4 \varepsilon\close\sum_{1\le i<j \le N}\frac{1}{|q_i(t)-q_j(t)|^{2}}+C\varepsilon^{3/2}\close\sum_{1\le i<j \le N}\frac{1}{|q_i(t)-q_j(t)|^{2}}+\Ctilde.\label{ineq:<q_i-q_j,grad.G(q_i-q_j)>}
\end{align}
In the above, we emphasize that $C$ is independent of $\varepsilon$ even though $\Ctilde$ may still depend on $\varepsilon$. Turning to the last term on the right-hand side of \eqref{eqn:d.log|q_i-q_j|}, i.e.,
\begin{align*}
-\varepsilon(d-2)\close\sum_{1\le i<j\le N} \frac{1}{|q_i(t)-q_j(t)|^2}\d t,
\end{align*}
there are two cases to be considered, depending on the dimension $d$. In dimension $d\ge 2$, it is clear that the above expression is negative and thus is negligible. On the other hand, in dimension $d=1$, it is reduced to
\begin{align*}
\varepsilon\close\sum_{1\le i<j\le N} \frac{1}{|q_i(t)-q_j(t)|^2}\d t.
\end{align*} 
In view of Assumption \ref{cond:G:d=1}, we combine with \eqref{ineq:<q_i-q_j,grad.G(q_i-q_j)>} to obtain
\begin{align*}
&-\varepsilon\sum_{i=1}^N \Big\la \sum_{j\neq i}\frac{q_i(t)-q_j(t)}{|q_i(t)-q_j(t)|^2} , - \sum_{\ell\neq i}\grad \G\big(q_i(t)-q_\ell(t)\big)\Big\ra +\varepsilon\close\sum_{1\le i<j\le N} \frac{1}{|q_i(t)-q_j(t)|^2}\nt\\
&\le -(2a_4-1)\varepsilon\close\sum_{1\le i<j\le N} \frac{1}{|q_i(t)-q_j(t)|^2}+C\varepsilon^{3/2}\close\sum_{1\le i<j \le N}\frac{1}{|q_i(t)-q_j(t)|^{2}}+\Ctilde,
\end{align*}
whence (by taking $\varepsilon$ sufficiently small)
\begin{align*}
&-\varepsilon\sum_{i=1}^N \Big\la \sum_{j\neq i}\frac{q_i(t)-q_j(t)}{|q_i(t)-q_j(t)|^2} , - \sum_{\ell\neq i}\grad \G\big(q_i(t)-q_\ell(t)\big)\Big\ra +\varepsilon\close\sum_{1\le i<j\le N} \frac{1}{|q_i(t)-q_j(t)|^2}\nt\\
&\le -c\varepsilon\close\sum_{1\le i<j\le N} \frac{1}{|q_i(t)-q_j(t)|^2}+\Ctilde,
\end{align*}
From \eqref{eqn:d.log|q_i-q_j|} and \eqref{ineq:d.log|q_i-q_j|:U}, we find
\begin{align}
&\d \Big(-\close\sum_{1\le i<j\le N}\close\log |q_i(t)-q_j(t)|\Big)\nt \\
&\le -c\varepsilon \close\sum_{1\le i<j \le N}\frac{1}{|q_i(t)-q_j(t)|^2} \d t + C\varepsilon^{1/2}|\qb(t)|^2\d t+C\varepsilon |\qb(t)|^{\lambda-1}\d t\nt\\
&\qquad+C\varepsilon^{1/2}\sum_{i=1}^N |\fb_i(t)|^2\d t-\varepsilon\sum_{i=1}^N \Big\la \sum_{j\neq i}\frac{q_i(t)-q_j(t)}{|q_i(t)-q_j(t)|^2}, \sqrt{2\gamma}\d W_{i,0}(t) \Big\ra+\Ctilde\d t.\label{ineq:d.log|q_i-q_j|}
\end{align}
Next, we combine estimates \eqref{ineq:d.(|q|^2+|f|^2)}, \eqref{ineq:d.log|q_i-q_j|} with expression \eqref{form:Gamma_2} of $\Gamma_2$ to infer
\begin{align} \label{ineq:d.Gamma_2}
\d \Gamma_2(t) &\le - c|\qb(t)|^2\d t-c|\qb(t)|^{\lambda+1}\d t-c\sum_{i=1}^N |\fb_i|^2 -\varepsilon\, c \close\sum_{1\le i<j \le N}\frac{1}{|q_i(t)-q_j(t)|^{2}} \d t+C \d t \nt \\
&\qquad+ \sum_{i=1}^N\sqrt{2\gamma}\la q_i(t),\d W_{i,0}(t)\ra+\sum_{i=1}^N \sum_{\ell=1}^{k_i} \sqrt{2\alpha_{i,\ell}}\la f_{i,\ell}(t),\d W_{i,\ell}(t)\ra \nt\\
& \qquad - \varepsilon\sum_{i=1}^N \Big\la \sum_{j\neq i}\frac{q_i(t)-q_j(t)}{|q_i(t)-q_j(t)|^{2}} , \sqrt{2\gamma}\d W_{i,0}(t) \Big\ra.
\end{align}
We may now employ an argument similarly to the exponential Martingale approach as in Case 1 to deduce the bound for all $\kappa$ sufficiently small
\begin{align*} 
\E\exp\Big\{\sup_{t\in[0,T]}\kappa\Gamma_2(t)\Big\}\le C,
\end{align*}
for some positive constant $C=C(T,\kappa,\varepsilon,\xb(0),\zb_1(0),\dots,\zb_N(0) )$. Recalling $\Gamma_2$ defined in \eqref{form:Gamma_2}, this produces the estimate \eqref{ineq:GLE:N-particle:m=0:supnorm:beta_1=1}. The proof is thus finished.

\end{proof}

\subsection{Proof of Theorem \ref{thm:small-mass:N-particle}} 

\label{sec:small-mass:proof-of-theorem}

We are now in a position to conclude Theorem \ref{thm:small-mass:N-particle}. The argument follows along the lines of the proof of \cite[Theorem 4]{nguyen2018small} adapted to our setting. See also \cite[Theorem 2.4]{herzog2016small}. The key observation is that instead of controlling the exiting time of the process $\xb_m(t)$ as $m\to 0$, we are able to control $\qb(t)$ since $\qb(t)$ is independent of $m$.

\begin{proof}[Proof of Theorem \ref{thm:small-mass:N-particle}]
Let $\big(\xb_m(t)$, $\vb_m(t)$, $\zb_{1,m}(t)$,$\dots$, $\zb_{N,m}(t)\big)$ and $\big(\qb(t)$, $\fb_{1}(t)$,$\dots$, $\fb_{N}(t)\big)$ respectively solve~\eqref{eqn:GLE:N-particle} and~\eqref{eqn:GLE:N-particle:m=0}. For $R,\,m>0$, define the following stopping times
\begin{align} \label{form:stopping-time:sigma^R}
\sigma^R = \inf_{t\geq 0}\Big\{|\qb(t)|+\close\sum_{1\le i<j\le N}\close|q_i(t)-q_j(t)|^{-1}\geq R\Big\},
\end{align}
and
\begin{align*}
\sigma^R_m = \inf_{t\geq 0}\Big\{|\xb_m(t)|+\close\sum_{1\le i<j\le N}\close |x_{i,m}(t)-x_{j,m}(t)|^{-1}\geq R\Big\}.
\end{align*}
Fixing $T,\,\xi>0$, observe that
\begin{align} \label{ineq:P(|x-q|>xi)}
\P\Big(\sup_{t\in[0,T]}|\xb_m(t)-\qb(t)|>\xi\Big)&\leq \P\Big(\sup_{t\in[0,T]}|\xb_m(t)-\qb(t)|>\xi,\sigma^R\mi\sigma^R_m\geq T\Big) \nt \\
&\qquad+\P\big(\sigma^R\mi\sigma^R_m<T\big).
\end{align}

To control the first term on the above right-hand side, observe that
\begin{align*}
\P\big(0\leq t\leq \sigma^R\mi\sigma^R_m,\, \qb(t)=\qb^R(t),\, \xb_m(t)=\xb_m^R(t)\big)=1,
\end{align*}
where $\qb^R(t)$ and $\xb_m^R(t)$ are the first components of the solutions of \eqref{eqn:GLE:N-particle:m=0:truncating} and \eqref{eqn:GLE:N-particle:truncating}, respectively. As a consequence,
\begin{align} \label{ineq:P(|x-q|>xi,sigma>T)}
&\P\Big(\sup_{0\leq t\leq T}|\xb_m(t)-\qb(t)|>\xi,\sigma^R\mi\sigma^R_m\geq T\Big) \nt \\
&\leq \P\Big(\sup_{0\leq t\leq T}|\xb^R_m(t)-\qb^R(t)|>\xi\Big)\le  \frac{m}{\xi^4}\cdot C(T,R).
\end{align} 
In the last estimate above, we employed Proposition~\ref{prop:small-mass:truncating} while making use of Markov's inequality. 

Turning to $\P(\sigma^R\mi\sigma^R_m<T)$, we note that
\begin{align}
&\P\big(\sigma^R\mi\sigma^R_m<T\big)\nt \\
&\leq \P\Big(\sup_{t\in[0,T]}|\xb_m^R(t)-\qb^R(t)|\leq \frac{\xi}{R},\sigma^R\mi\sigma^R_m < T\Big) +\P\Big(\sup_{t\in[0,T]}|\xb_m^R(t)-\qb^R(t)|> \frac{\xi}{R}\Big) \nt \\
&\leq  \P\Big(\sup_{t\in[0,T]}|\xb_m^R(t)-\qb^R(t)|\leq\frac{\xi}{R},\sigma^R_m < T\leq\sigma^R\Big)+\P(\sigma^R< T) \nt \\
&\qquad\qquad\qquad+\P\Big(\sup_{t\in[0,T]}|\xb_m^R(t)-\qb^R(t)|> \frac{\xi}{R}\Big)  \nt \\
&= I_1+I_2+I_3. \label{ineq:P(sigma<T):I_1+I_2+I_3}
\end{align}
Concerning $I_3$, the same argument as in \eqref{ineq:P(|x-q|>xi,sigma>T)} produces the bound
\begin{align}\label{ineq:P(sigma<T):I_3}
I_3=\P\Big(\sup_{0\leq t\leq T}|\xb^R_m(t)-\qb^R(t)|>\frac{\xi}{R}\Big)\le  \frac{m}{\xi^4}\cdot C(T,R).
\end{align}
Next, considering $I_2$, from \eqref{form:stopping-time:sigma^R}, observe that for all $\varepsilon$ small and $R$ large enough,
\begin{align*}
\big\{  \sigma^R<T\big\}&=\Big\{  \sup_{t\in[0,T]}|\qb(t)|+\close\sum_{1\le i<j\le N}\close|q_i(t)-q_j(t)|^{-1}\geq R\Big\}\\
&\subseteq \Big\{  \sup_{t\in[0,T]}|\qb(t)|\geq \frac{R}{N^2}\Big\}\bigcup_{1\le i<j\le N} \Big\{ -\varepsilon\log|q_i(t)-q_j(t)|\geq \varepsilon\log\Big(\frac{R}{N^2}\Big)\Big\}\\
&\subseteq \Big\{  \sup_{t\in[0,T]}|\qb(t)|^2-\varepsilon\close\sum_{1\le i<j\le N}\close\log|q_i(t)-q_j(t)| \geq \frac{R}{N^2}\Big\}\\
&\qquad\qquad\bigcup_{1\le i<j\le N} \Big\{ \sup_{t\in[0,T]}|\qb(t)|^2-\varepsilon\close\sum_{1\le i<j\le N}\close\log|q_i(t)-q_j(t)| \geq \varepsilon\log\Big(\frac{R}{N^2}\Big)\Big\},
\end{align*}
whence,
\begin{align*}
\big\{  \sigma^R<T\big\}
&\subseteq \Big\{ \sup_{t\in[0,T]}|\qb(t)|^2-\varepsilon\close\sum_{1\le i<j\le N}\close\log|q_i(t)-q_j(t)| \geq \varepsilon\log\Big(\frac{R}{N^2}\Big)\Big\}.
\end{align*}
We note that Proposition \ref{prop:small-mass:truncating} implies the estimate
\begin{align} \label{ineq:GLE:N-particle:m=0:supnorm:beta_1:log}
\E\Big[\sup_{t\in[0,T]}\Big(|\qb(t)|^2-\varepsilon\close\sum_{1\le i<j\le N}\close \log|q_i(t)-q_j(t)|\Big)\Big]\le C,
\end{align}
for some positive constant $C=C(T,\varepsilon)$. Together with Markov's inequality, we infer the bound for $R$ large enough
\begin{align} \label{ineq:P(sigma<T):I_2}
I_2=\P(\sigma^R<T)\le \frac{C(T)}{\varepsilon\log(R/N^2)}\le \frac{C(T)}{\varepsilon \log R}.
\end{align}
Turning to $I_1$ on the right-hand side of \eqref{ineq:P(sigma<T):I_1+I_2+I_3}, for $R$ large enough and $\xi\in(0,1)$, a chain of event implications is derived as follows:
\begin{align*}
&\Big\{\sup_{t\in[0,T]}|\xb_m^R(t)-\qb^R(t)|\leq\frac{\xi}{R},\sigma^R_m < T\leq\sigma^R\Big\}\\
&= \Big\{\sup_{t\in[0,T]}|\xb_m^R(t)-\qb(t)|\leq\frac{\xi}{R},\sup_{t\in[0,T]}\Big(|\xb_m^R(t)|+\close\sum_{1\le i<j\le N}\close|x_{i,m}^{R}(t)-x_{j,m}^R(t)|^{-1}\Big)\ge R,\sigma^R_m < T\leq\sigma^R\Big\}\\
&\subseteq \Big\{\sup_{t\in[0,T]}|\xb_m^R(t)-\qb(t)|\leq\frac{\xi}{R},\sup_{t\in[0,T]}|\xb_m^R(t)|\ge \frac{R}{N^2}\Big\}\\
&\qquad\qquad \bigcup_{1\le i<j\le N}\Big\{\sup_{t\in[0,T]}|\xb_m^R(t)-\qb(t)|\leq\frac{\xi}{R},\sup_{t\in[0,T]}|x_{i,m}^{R}(t)-x_{j,m}^R(t)|^{-1}\ge \frac{R}{N^2}\Big\}\\
&=\Big\{\sup_{t\in[0,T]}|\xb_m^R(t)-\qb(t)|\leq\frac{\xi}{R},\sup_{t\in[0,T]}|\xb_m^R(t)|\ge \frac{R}{N^2}\Big\}\\
&\qquad\qquad \bigcup_{1\le i<j\le N}\Big\{\sup_{t\in[0,T]}|\xb_m^R(t)-\qb(t)|\leq\frac{\xi}{R},\inf_{t\in[0,T]}|x_{i,m}^{R}(t)-x_{j,m}^R(t)|\le \frac{N^2}{R}\Big\}.
\end{align*}
Since $\xi$ and $N$ are fixed, for $R$ large enough, say, $\frac{R}{N^2}- \frac{\xi}{R}\ge \sqrt{R} $, we have
\begin{align*}
&\Big\{\sup_{t\in[0,T]}|\xb_m^R(t)-\qb(t)|\leq\frac{\xi}{R},\sup_{t\in[0,T]}|\xb_m^R(t)|\ge \frac{R}{N^2}\Big\} \\
&\subseteq  \Big\{\sup_{t\in[0,T]}|\qb(t)|\ge \frac{R}{N^2}- \frac{\xi}{R}\Big\}\subseteq  \Big\{\sup_{t\in[0,T]}|\qb(t)|\ge \sqrt{R}\Big\}\\
&\subseteq \Big\{ \sup_{t\in[0,T]}|\qb(t)|^2-\varepsilon\close\sum_{1\le i<j\le N}\close\log|q_i(t)-q_j(t)| \geq \sqrt{R}\Big\}.
\end{align*}
On the other hand, by triangle inequality,
\begin{align*}
\inf_{t\in[0,T]}|q_{i}(t)-q_{j}(t)| \le 2\sup_{t\in[0,T]}|\xb_m^{R}(t)-\qb(t)|+\inf_{t\in[0,T]}|x_{i,m}^{R}(t)-x_{j,m}^R(t)|,
\end{align*}
implying
\begin{align*}
&\Big\{\sup_{t\in[0,T]}|\xb_m^R(t)-\qb(t)|\leq\frac{\xi}{R},\inf_{t\in[0,T]}|x_{i,m}^{R}(t)-x_{j,m}^R(t)|\le \frac{N^2}{R}\Big\}\\
&\subseteq   \Big\{\inf_{t\in[0,T]}|q_{i}(t)-q_{j}(t)|\le \frac{2\xi+N^2}{R}\Big\} \subseteq   \Big\{\inf_{t\in[0,T]}|q_{i}(t)-q_{j}(t)|\le \frac{1}{\sqrt{R}}\Big\} \\
&= \Big\{-\varepsilon\sup_{t\in[0,T]}\log|q_{i}(t)-q_{j}(t)|\ge\frac{1}{2}\varepsilon\log R\Big\}.
\end{align*}
It follows that for $\varepsilon$ small and $R$ large enough
\begin{align*}
&\Big\{\sup_{t\in[0,T]}|\xb_m^R(t)-\qb(t)|\leq\frac{\xi}{R},\inf_{t\in[0,T]}|x_{i,m}^{R}(t)-x_{j,m}^R(t)|\le \frac{N^2}{R}\Big\}\\
&\subseteq \Big\{ \sup_{t\in[0,T]}|\qb(t)|^2-\varepsilon\close\sum_{1\le i<j\le N}\close\log|q_i(t)-q_j(t)| \geq \frac{1}{2}\varepsilon \log R\Big\}.
\end{align*}
As a consequence, the following holds
\begin{align*}
&\Big\{\sup_{t\in[0,T]}|\xb_m^R(t)-\qb^R(t)|\leq\frac{\xi}{R},\sigma^R_m < T\leq\sigma^R\Big\}\\
&\subseteq \Big\{ \sup_{t\in[0,T]}|\qb(t)|^2-\varepsilon\close\sum_{1\le i<j\le N}\close\log|q_i(t)-q_j(t)| \geq \frac{1}{2}\varepsilon \log R\Big\}.
\end{align*}
We employ \eqref{ineq:GLE:N-particle:m=0:supnorm:beta_1:log} and Markov's inequality to infer
\begin{align} \label{ineq:P(sigma<T):I_1}
I_1&=\P\Big(\sup_{t\in[0,T]}|\xb_m^R(t)-\qb^R(t)|\leq\frac{\xi}{R},\sigma^R_m < T\leq\sigma^R\Big)\le \frac{C(T)}{\varepsilon \log R}.
\end{align}
Turning back to \eqref{ineq:P(sigma<T):I_1+I_2+I_3}, we collect~\eqref{ineq:P(sigma<T):I_3}, \eqref{ineq:P(sigma<T):I_2} and \eqref{ineq:P(sigma<T):I_1} to arrive at the bound
\begin{align} \label{ineq:P(sigma<T)}
&\P\big(\sigma^R\mi\sigma^R_m<T\big)\le  \frac{m}{\xi^4}\cdot C(T,R)+ \frac{C(T)}{\varepsilon \log R}.
\end{align}
We emphasize that in the above estimate $C(T,R)$ and $C(R)$ are independent of $m$.

Now, putting everything together, from \eqref{ineq:P(|x-q|>xi)}, \eqref{ineq:P(|x-q|>xi,sigma>T)} and \eqref{ineq:P(sigma<T)}, we obtain the estimate
\begin{align*}
&\P\Big(\sup_{t\in[0,T]}|\xb_m(t)-\qb(t)|>\xi\Big) \le \frac{m}{\xi^4}\cdot C(T,R)+ \frac{C(T)}{\varepsilon \log R},
\end{align*}
for all $\varepsilon$ small and $R$ large enough. By sending $R$ to infinity and then shrinking $m$ further to zero, this produces the small mass limit\eqref{lim:small-mass:probability}, thereby completing the proof.

\end{proof}

\begin{remark}
For the underdamped Langevin dynamics, it is well known that the small mass limit $m\rightarrow 0$ and the high-friction limit $\gamma\rightarrow + \infty$ 
(under an appropriate time (or noise) rescaling) both lead to the same limiting system, which is the overdamped Langevin dynamics, see e.g. \cite[Section 2.2.4]{LRS2010} and also \cite{duong2017variational} for the Vlasov-Fokker-Planck system. Similarly, one can also derive the underdamped Langevin dynamics from the generalized Langevin dynamics (GLE) under the white-noise limit, by rescaling the friction coefficients $(\lambda_i)$ and the strength of the noises $(\alpha)_i$ appropriately, see e.g.  \cite{ottobre2011asymptotic,nguyen2018small} for a rigorous analysis for the non-interacting GLE and \cite{duong2019mean} for a formal derivation for the interacting-GLE with regular interactions. This white-noise limit is different from the small-mass limit studied in this paper. It would be interesting to study the white-noise limit for the generalized Langevin dynamics with irregular interactions, which we leave for future work.
\end{remark}
\section*{Acknowledgments}
The research of M. H. Duong was supported by EPSRC Grants EP/V038516/1 and  EP/W008041/1. M. H. Duong gratefully acknowledges Yulong Lu for useful discussions in an early stage of the topics of this work. The authors also would like to thank the anonymous reviewers for their valuable comments and suggestions.

\section*{conflict of interest}

The authors have no conflicts of interest to declare that are relevant to the content of this article.

\appendix

\section{Auxiliary estimates on singular potentials}
\label{sec:appendix}

In this section, we collect some useful estimates  on singular potentials, cf. Lemma \ref{lem:|x_i-x_i|} and Lemma \ref{lem:<|x_i-x_j|^s,|x_i-x_ell|^s>} below. In particular, Lemma \ref{lem:|x_i-x_i|} was employed to construct Lyapunov functions in Section \ref{sec:ergodicity}. Lemma \ref{lem:<|x_i-x_j|^s,|x_i-x_ell|^s>} appears in the proof of Lemma \ref{lem:GLE:N-particle:m=0:supnorm} which was invoked to prove the small mass result of Theorem \ref{thm:small-mass:N-particle}.

\begin{lemma} \label{lem:|x_i-x_i|}
For all $s\ge 0$ and $\xb=(x_1,\dots,x_N)\in\D$, the following holds:
\begin{align} \label{ineq:|x_i-x_i|}
\sum_{i=1}^N\bigg\la\sum_{j\neq i}\frac{x_i-x_j}{|x_i-x_j|^{s+1}} ,\sum_{\ell\neq i}\frac{x_i-x_\ell}{|x_i-x_\ell|}\bigg\ra \ge 2\close\sum_{1\leq i< j\leq N}\frac{1}{|x_i-x_j|^{s}}.
\end{align}
\end{lemma}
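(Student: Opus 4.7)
The plan is to expand the inner product into a triple sum and split it into a ``diagonal'' part (where the two summation indices coincide) and an ``off-diagonal'' part. The diagonal part will produce exactly the right-hand side of \eqref{ineq:|x_i-x_i|}, while the off-diagonal part will be shown to be non-negative by a per-triple trigonometric argument.

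First I would write
\begin{align*}
S &:= \sum_{i=1}^N\bigg\la\sum_{j\neq i}\frac{x_i-x_j}{|x_i-x_j|^{s+1}} ,\sum_{\ell\neq i}\frac{x_i-x_\ell}{|x_i-x_\ell|}\bigg\ra \\
&= \sum_{i=1}^N \sum_{\substack{j\neq i}}\sum_{\substack{\ell\neq i}} \frac{\la x_i-x_j,\, x_i-x_\ell\ra}{|x_i-x_j|^{s+1}|x_i-x_\ell|},
\end{align*}
and separate $S=S_{\mathrm{diag}}+S_{\mathrm{off}}$ according to whether $j=\ell$ or $j\neq\ell$. The diagonal part collapses immediately to
\begin{align*}
S_{\mathrm{diag}}=\sum_{i=1}^N\sum_{j\neq i}\frac{|x_i-x_j|^2}{|x_i-x_j|^{s+2}}=2\close\sum_{1\le i<j\le N}\frac{1}{|x_i-x_j|^s},
\end{align*}
which is exactly the desired lower bound. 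It therefore remains to prove $S_{\mathrm{off}}\ge 0$.

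Second, I would regroup $S_{\mathrm{off}}$ over unordered triples $\{a,b,c\}$ of pairwise distinct indices. For any such triple set $\alpha=|x_a-x_b|$, $\beta=|x_a-x_c|$, $\gamma=|x_b-x_c|$, and let $\phi_a,\phi_b,\phi_c\in[0,\pi]$ denote the (possibly degenerate) interior angles of the triangle with vertices $x_a,x_b,x_c$, so that $\la x_a-x_b,x_a-x_c\ra=\alpha\beta\cos\phi_a$, etc. The six terms of $S_{\mathrm{off}}$ contributed by this triple (two for each choice of center $i\in\{a,b,c\}$) simplify by direct computation to
\begin{align*}
T_{abc}=\alpha^{-s}(\cos\phi_a+\cos\phi_b)+\beta^{-s}(\cos\phi_a+\cos\phi_c)+\gamma^{-s}(\cos\phi_b+\cos\phi_c).
\end{align*}

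Third, I would use the triangle identity $\phi_a+\phi_b+\phi_c=\pi$, which holds even in the collinear case (with appropriate angles equal to $0$ or $\pi$), together with the sum-to-product formula to obtain
\begin{align*}
\cos\phi_a+\cos\phi_b=2\cos\!\Big(\tfrac{\phi_a+\phi_b}{2}\Big)\cos\!\Big(\tfrac{\phi_a-\phi_b}{2}\Big)=2\sin\!\Big(\tfrac{\phi_c}{2}\Big)\cos\!\Big(\tfrac{\phi_a-\phi_b}{2}\Big)\ge 0,
\end{align*}
since $\phi_c/2\in[0,\pi/2]$ and $(\phi_a-\phi_b)/2\in(-\pi/2,\pi/2)$. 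The same reasoning applies to the other two pairs, so every coefficient in $T_{abc}$ is non-negative, giving $T_{abc}\ge 0$. Summing over all triples yields $S_{\mathrm{off}}\ge 0$, and combining with the identity for $S_{\mathrm{diag}}$ produces \eqref{ineq:|x_i-x_i|}.

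The main obstacle is the per-triple reduction to the clean trigonometric expression for $T_{abc}$, together with careful bookkeeping in the degenerate collinear configurations; once the sum-to-product identity is applied, the positivity of each coefficient is immediate from the fact that the three angles of a (possibly degenerate) triangle sum to $\pi$.
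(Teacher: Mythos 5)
Your proof is correct. The paper does not write out an argument for this lemma (it simply cites \cite[Lemma 4.2]{lu2019geometric}), and your route — expanding into a triple sum, identifying the diagonal $j=\ell$ part with the right-hand side, and showing each per-triple off-diagonal contribution $T_{abc}$ is non-negative via $\phi_a+\phi_b+\phi_c=\pi$ and the sum-to-product identity — is exactly the standard argument of that reference; the only nitpick is that in the collinear case one can have $(\phi_a-\phi_b)/2=\pm\pi/2$, so the interval should be closed, but the cosine is then $0$ and the non-negativity is unaffected.
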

\begin{proof}
The proof is the same as that of \cite[Lemma 4.2]{lu2019geometric}. See also \cite[Inequality (5.1)]{bolley2018dynamics}.
\end{proof}

\begin{lemma} \label{lem:<|x_i-x_j|^s,|x_i-x_ell|^s>}
For all $\xb=(x_1,\dots,x_N)\in\D$, the followings hold:

\textup{(a)} For all $s\ge 0$,
\begin{align} \label{ineq:|x_i-x_i|:<|x_i-x_j|^s,|x_i-x_ell|^s>}
\sum_{i=1}^N\bigg\la\sum_{j\neq i}\frac{x_i-x_j}{|x_i-x_j|^{s+1}} ,\sum_{\ell\neq i}\frac{x_i-x_\ell}{|x_i-x_\ell|^{s+1}}\bigg\ra \ge  \frac{4}{N(N-1)^2}\close\sum_{1\leq i< j\leq N}\frac{1}{|x_i-x_j|^{2s}}.
\end{align}

\textup{(b)} Furthermore, for $s\in[0,1]$,
\begin{align} \label{ineq:|x_i-x_i|:<|x_i-x_j|^s,|x_i-x_ell|^s>:s<1}
\sum_{i=1}^N\bigg\la\sum_{j\neq i}\frac{x_i-x_j}{|x_i-x_j|^{s+1}} ,\sum_{\ell\neq i}\frac{x_i-x_\ell}{|x_i-x_\ell|^{s+1}}\bigg\ra \ge  2\close\sum_{1\leq i< j\leq N}\frac{1}{|x_i-x_j|^{2s}}.
\end{align}
\end{lemma}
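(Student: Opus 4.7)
Introduce $y_{ij} := (x_i - x_j)/|x_i - x_j|^{s+1}$ and $S_i := \sum_{j \ne i} y_{ij}$, so that the left-hand side of both (a) and (b) equals $\sum_{i=1}^N |S_i|^2$, while $|y_{ij}|^2 = |x_i - x_j|^{-2s}$. The two parts will be handled by distinct arguments.

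For (a), I will pair $S_i$ against the unit-vector sums $T_i := \sum_{\ell \ne i}(x_i - x_\ell)/|x_i - x_\ell|$ from Lemma~\ref{lem:|x_i-x_i|}, via Cauchy--Schwarz:
\[
\Bigl(\sum_i \langle S_i, T_i\rangle\Bigr)^2 \le \Bigl(\sum_i |S_i|^2\Bigr)\Bigl(\sum_i |T_i|^2\Bigr).
\]
Lemma~\ref{lem:|x_i-x_i|} bounds $\sum_i\langle S_i, T_i\rangle \ge 2\sum_{i<j}|x_i - x_j|^{-s}$, while the triangle inequality yields $|T_i| \le N-1$ and hence $\sum_i|T_i|^2 \le N(N-1)^2$. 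Combining these with the elementary fact $(\sum_k a_k)^2 \ge \sum_k a_k^2$ for $a_k \ge 0$ produces the bound in (a).

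For (b), I will expand $\sum_i |S_i|^2 = 2\sum_{i<j}|y_{ij}|^2 + R$, where $R := \sum_i \sum_{j \ne \ell,\, j,\ell \ne i}\langle y_{ij}, y_{i\ell}\rangle$ collects the triple cross terms. Regrouping $R$ by unordered triple $\{p,q,r\}$ and using $\langle x_i - x_j, x_i - x_\ell\rangle = \tfrac{1}{2}(c^2 + b^2 - a^2)$ with $a = |x_j - x_\ell|$, $b = |x_i - x_\ell|$, $c = |x_i - x_j|$, the inequality (b) reduces to the per-triangle claim
\[
F(a,b,c;s) := a^{s+1}(b^2+c^2-a^2) + b^{s+1}(a^2+c^2-b^2) + c^{s+1}(a^2+b^2-c^2) \ge 0
\]
for every triangle and every $s \in [0,1]$.

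The hard part will be proving this in the obtuse case, since in the acute or right case all three parenthesized factors are nonnegative and $F \ge 0$ is immediate. Without loss of generality assume the obtuse angle lies at $C$, so that $c$ is the longest side. Using the scaling $F(\lambda a, \lambda b, \lambda c; s) = \lambda^{s+3} F(a,b,c;s)$ I will normalize $c = 1$; then $a, b \in (0,1)$, the coefficients $\alpha := b^2 + 1 - a^2$ and $\beta := a^2 + 1 - b^2$ are strictly positive, while $\gamma := a^2 + b^2 - 1 < 0$. Since $\log 1 = 0$,
\[
F'(s) = \alpha\, a^{s+1}\log a + \beta\, b^{s+1}\log b \le 0,
\]
because $\log a, \log b < 0$. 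Hence $F$ is nonincreasing in $s$, and Heron's formula gives $F(1) = 16\,\mathrm{Area}^2 \ge 0$, so $F(s) \ge F(1) \ge 0$ for every $s \le 1$, closing (b).
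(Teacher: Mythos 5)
Your proof is correct. Part (a) and the overall structure of part (b) coincide with the paper's argument: the same Cauchy--Schwarz pairing against the unit-vector sums of Lemma \ref{lem:|x_i-x_i|} together with $\bigl(\sum_k a_k\bigr)^2\ge\sum_k a_k^2$ for (a), and the same expansion into diagonal terms plus a per-triangle cross sum, with the same trivial acute/right case, for (b). Where you genuinely diverge is the obtuse case of the triangle inequality: the paper works with the angles, using $\min\{\cos\theta_i,\cos\theta_\ell\}\ge\cos(\theta_i+\theta_\ell)=-\cos\theta_j$ and then the subadditivity of $t\mapsto t^s$ on $[0,1]$ applied to the longest side, whereas you normalize the longest side to $1$, observe that $F$ is nonincreasing in $s$ because the two positive coefficients multiply powers of lengths in $(0,1)$, and anchor at $s=1$ via Heron's formula $F(1)=16\,\mathrm{Area}^2\ge 0$. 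Both are complete (your version also handles degenerate collinear triples, where $F(1)=0$); yours makes transparent exactly where $s\le 1$ enters and why $s=1$ is the borderline value, at the cost of invoking a small calculus/Heron computation, while the paper's is purely synthetic and makes visible the geometric mechanism (the obtuse angle is compensated by the two acute ones). Either argument would serve; there is nothing to fix.
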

\begin{proof}
(a) With regard to \eqref{ineq:|x_i-x_i|:<|x_i-x_j|^s,|x_i-x_ell|^s>}, we employ Cauchy-Schwarz inequality to infer
\begin{align*}
\bigg|\sum_{i=1}^N\bigg\la\sum_{j\neq i}\frac{x_i-x_j}{|x_i-x_j|^{s+1}} ,\sum_{\ell\neq i}\frac{x_i-x_\ell}{|x_i-x_\ell|}\bigg\ra \bigg|^2
&\le \sum_{i=1}^N\bigg|\sum_{j\neq i}\frac{x_i-x_j}{|x_i-x_j|^{s+1}}\bigg|^2\cdot \sum_{i=1}^N\bigg|\sum_{\ell\neq i}\frac{x_i-x_\ell}{|x_i-x_\ell|}\bigg|^2 \\
&\le \sum_{i=1}^N\bigg|\sum_{j\neq i}\frac{x_i-x_j}{|x_i-x_j|^{s+1}}\bigg|^2\cdot N(N-1)^2.
\end{align*}
In view of Lemma~\ref{lem:|x_i-x_i|}, the following holds
\begin{align*}
\bigg|\sum_{i=1}^N\bigg\la\sum_{j\neq i}\frac{x_i-x_j}{|x_i-x_j|^{s+1}} ,\sum_{\ell\neq i}\frac{x_i-x_\ell}{|x_i-x_\ell|}\bigg\ra \bigg|^2\ge  4\bigg(\sum_{1\leq i< j\leq N}\frac{1}{|x_i-x_j|^{s}}\bigg)^2.
\end{align*}
Therefore, we obtain
\begin{align*}
\sum_{i=1}^N\bigg|\sum_{j\neq i}\frac{x_i-x_j}{|x_i-x_j|^{s+1}}\bigg|^2 \ge \frac{4}{N(N-1)^2}\bigg(\sum_{1\leq i< j\leq N}\frac{1}{|x_i-x_j|^{s}}\bigg)^2.
\end{align*}
This produces~\eqref{ineq:|x_i-x_i|:<|x_i-x_j|^s,|x_i-x_ell|^s>}, thus establishing part (a).

(b) Turning to \eqref{ineq:|x_i-x_i|:<|x_i-x_j|^s,|x_i-x_ell|^s>:s<1}, we will follow the proof of \cite[Lemma 4.2]{lu2019geometric} adapted to our setting. A routine computation gives 
\begin{align*}
&\sum_{i=1}^N\bigg\la\sum_{j\neq i}\frac{x_i-x_j}{|x_i-x_j|^{s+1}} ,\sum_{\ell\neq i}\frac{x_i-x_\ell}{|x_i-x_\ell|^{s+1}}\bigg\ra   \\
&= 2\close \sum_{1\leq i< j\leq N}\frac{1}{|x_i-x_j|^{2s}}+2\close\sum_{1\le i<j<\ell\le N}\bigg[\frac{\la x_i-x_j,x_i-x_\ell\ra}{|x_i-x_j|^{s+1} |x_i-x_\ell|^{s+1}}\\
&\qquad\qquad+\frac{\la x_j-x_i,x_j-x_\ell\ra}{|x_j-x_i|^{s+1} |x_j-x_\ell|^{s+1}}+\frac{\la x_\ell-x_i,x_\ell-x_j\ra}{|x_\ell-x_i|^{s+1} |x_\ell-x_j|^{s+1}}\bigg].
\end{align*}
It suffices to prove that the second sum on the above right-hand side is non negative. To this end, we claim that for $i<j<\ell$,
\begin{align*}
\frac{\la x_i-x_j,x_i-x_\ell\ra}{|x_i-x_j|^{s+1} |x_i-x_\ell|^{s+1}}+\frac{\la x_j-x_i,x_j-x_\ell\ra}{|x_j-x_i|^{s+1} |x_j-x_\ell|^{s+1}}+\frac{\la x_\ell-x_i,x_\ell-x_j\ra}{|x_\ell-x_i|^{s+1} |x_\ell-x_j|^{s+1}}\ge 0.
\end{align*} 
Denote $\theta_i,\theta_j,\theta_\ell$ to be the angles formed by these points and whose vertices are respectively $x_i,x_j,x_\ell$. Observe that the above inequality is equivalent to
\begin{align} \label{ineq:cos(theta_i)|x_j-x_ell|^s}
\cos(\theta_i)|x_j-x_\ell|^{s}+\cos(\theta_j)|x_i-x_\ell|^{s}+\cos(\theta_\ell)|x_i-x_j|^{s}\ge 0.
\end{align}
Now, there are two cases to be considered depending on the triangle formed by $x_i,\,x_j,\,x_\ell$ in $\rbb^d$.

Case 1: the triangle is an acute or right triangle. In this case, it is clear that $\cos(\theta_i), \, \cos(\theta_j)$ and $\cos(\theta_\ell)$ are both non negative. This immediately produces \eqref{ineq:cos(theta_i)|x_j-x_ell|^s}.

Case 2: The triangle is an obtuse triangle. Without loss of generality, suppose that the angle $\theta_j\in (\pi/2,\pi]$, and thus $\theta_i+\theta_\ell\in[0,\pi/2)$. Observe that
\begin{align*}
\min\{\cos(\theta_i),\cos(\theta_\ell)\}\ge \cos(\theta_i+\theta_\ell)=-\cos(\theta_j)\ge 0.
\end{align*}
As a consequence, since $s\in[0,1]$,
\begin{align*}
&\cos(\theta_i)|x_j-x_\ell|^{s}+\cos(\theta_j)|x_i-x_\ell|^{s}+\cos(\theta_\ell)|x_i-x_j|^{s}\\
&\ge |\cos(\theta_j)|\Big(|x_j-x_\ell|^{s}-|x_i-x_\ell|^{s}+|x_i-x_j|^{s} \Big)\ge 0.
\end{align*}
This establishes \eqref{ineq:cos(theta_i)|x_j-x_ell|^s}, thereby finishing the proof of part (b).

\end{proof}

\bibliographystyle{abbrv}
{\footnotesize\bibliography{GLE-bib}}

\end{document}